\newlength{\@thlabel@width}%
\newcommand{\thmenumhspace}{\settowidth{\@thlabel@width}{\upshape(i)}\sbox{\@labels}{\unhbox\@labels\hspace{\dimexpr-\leftmargin+\labelsep+\@thlabel@width-\itemindent}}}
\title{On the higher Whitehead product}
\author{Marek Golasi\'nski}
\address{Faculty of Mathematics and Computer Science, University of Warmia and Mazury,
S\l oneczna 54 Street, 10-710 Olsztyn, Poland}
\email{marekg@matman.uwm.edu.pl}
\author[Thiago de Melo]{Thiago de Melo\textsuperscript{*}}
\address{Instituto de Geoci\^encias e Ci\^encias Exatas\\ UNESP--Univ Estadual Paulista\\ Av.~24A, 1515, Bela Vista. CEP 13.506--900. Rio Claro--SP, Brazil}
\email{tmelo@rc.unesp.br}
\thanks{\textsuperscript{*}Supported by FAPESP 2014/21926-1}
\subjclass[2010]{Primary 55Q15; secondary 55Q35}
\keywords{Generalized separation element, generalized Whitehead product, $G$-space, higher order Whitehead product}
\theoremstyle{definition}
\newtheorem{definition}{Definition}[section]
\theoremstyle{plain}
\newtheorem{theorem}[definition]{Theorem}
\newtheorem{proposition}[definition]{Proposition}
\newtheorem{corollary}[definition]{Corollary}
\newtheorem{lemma}[definition]{Lemma}
\theoremstyle{remark}
\newtheorem{remark}[definition]{Remark}
\begin{document}
\baselineskip=1.5em

\begin{abstract} G.\ J.\ Porter's approach \cite{porter} is used to derive some properties of higher order Whitehead products, similar to those ones
for triple products obtained by Hardie in \cite{hardie}.  Computations concerning the higher order Whitehead product for spheres and projective spaces are presented as well.
\end{abstract}

\maketitle

\section*{Introduction}
Whitehead products play an important role in algebraic topology and its applications.
The classical Whitehead product $[f,g]$ of (homotopy classes of) maps $f:\mathbb{S}^m\to X$ and $g:\mathbb{S}^n\to X$ is (the homotopy class)
represented by a map $h:\mathbb{S}^{m+n-1}\to X$ and  defined by means of the so called Whitehead map
$\omega:\mathbb{S}^{m+n-1}\to \mathbb{S}^m\vee \mathbb{S}^n$, the attaching map for the $(m+n)$-cell of $\mathbb{S}^m\times \mathbb{S}^n$.

Arkowitz \cite{ark}  constructed a generalization $\omega:\Sigma(A\wedge B)\to \Sigma A\vee \Sigma B$ of the Whitehead map
to define the generalized Whitehead product $[f,g]$ of maps $f :\Sigma A\to X$ and $g :\Sigma B\to X$.
Then Zeeman \cite{massey} and Hardie \cite{hardie} generalized the Whitehead product in a different context, that is, they defined
a triple spherical Whitehead product $[f_1,f_2,f_3]$ of maps $f_i:\mathbb{S}^{m_i}\to X$ for $i=1,2,3$. The main result from \cite{hardie} deals with
the $r$\textsuperscript{th} order spherical Whitehead product $[f_1,\ldots,f_r]$ for maps $f_i : \mathbb{S}^{m_i}\to X$ with $r\geq 3$ defined in \cite{hardie1},
and in particular, it states that the triple product $[f_1,f_2,f_3]$ is a coset of a subgroup of $\pi_{m-1}(X)$, where $m=m_1+m_2+m_3$. Many properties which hold for the classical Whitehead product still hold for the triple one as well.

G.\ J.\ Porter's approach \cite{porter} was to construct the Whitehead map $\omega_r$ for more than two spaces
(see Equation~\eqref{eq.whitehead.map.generalized}).
Hardie's construction from \cite{hardie1} was generalized  in \cite{porter}, where the $r$\textsuperscript{th} order generalized Whitehead product of maps $f_i:\Sigma A_i\to X$ for $i=1,\dots ,r$,
with $r\geq 2$ was introduced. Then the $2^\text{nd}$ order Whitehead product coincides with the generalized Whitehead product
studied by Arkowitz in \cite{ark}.

Higher order Whitehead products are secondary, tertiary, etc.\ analogues of ordinary Whitehead products. They
first appeared in the late 1960's as a part of a research theme studying higher products, or simply higher
structures, in homotopy theory. They are vital for understanding the homotopy theory of certain
basic objects, such as an iterated product of spaces, and their maps into other spaces. Recently, higher Whitehead products
have re-emerged as key players in the homotopy theory of polyhedral products. These are important
objects in toric topology and are being increasingly used in geometric group theory and graph theory.
Porter's construction is very useful in many mathematical constructions. For example, given a simplicial complex $K$ on $n$ vertices,
Davis and Januszkiewicz \cite{dj}  associated two fundamental objects of toric topology: the moment--angle complex
$\mathcal{Z}_K$ and the Davis--Januszkiewicz space $DJ_K$. The homotopy fibration sequence
\[\mathcal{Z}_K\xrightarrow{\tilde{\omega}}DJ_K\to\textstyle\prod\limits^n_{i=1}\mathbb{C}P^\infty\] and its generalization have been studied
in \cite{gt} and \cite{kish}, respectively to show that $\tilde{\omega} : \mathcal{Z}_K\to DJ_K$ is a sum of higher and iterated Whitehead products
for appropriate complexes $K$.

Next, let $\mathcal{F}(\mathbb{R}^{n+1},m)$ be the Euclidean ordered configuration space.
By  Salvatore \cite[Theorem~7]{salvatore}, the homotopy type of $\mathcal{F}(\mathbb{R}^{n+1},m)$ for $n \geq 2$
admits a minimal cellular model
\[\ast=X_0\subseteq X_n\subseteq X_{2n}\subseteq\dots \subseteq X_{mn}\]
whose cells are attached via higher order Whitehead products.

In this paper, we work with triple and $r$\textsuperscript{th} order Whitehead products. The aim of Section~\ref{sec.preliminaries} is to fix some notations,
recall definitions and necessary results from \cite{ando,ark} and  present  properties on separation elements, and the relative generalized Whitehead
product as well. Section~\ref{sec.HOGWP} expounds the main facts from \cite{porter} on $r$\textsuperscript{th} Whitehead products which are used
to discuss the main result from \cite{hardie}. In Section~\ref{sec.triple}, based on some relations from \cite{toda}, we prove
Proposition~\ref{prop.order.15}, the main result of that section, which deals with the non-triviality problem of the triple
product $[\eta_4,\eta_4^2,2\iota_4]\subseteq \pi_{14}(\mathbb{S}^4)$, stated by Hardie in \cite[Section~5]{hardie}. The main result of Section~\ref{sec.main} is Theorem~\ref{teo.main} which extends the result \cite[Theorem~4.3]{hardie} to higher order Whitehead products. Finally, Section~\ref{sec.five} is devoted to some computations concerning the $r$\textsuperscript{th} order Whitehead product for spheres and projective spaces. In particular, Proposition~\ref{wupro} shows that there are spaces $X$ such that the $r$\textsuperscript{th} order generalized Whitehead product $[0_1,\dots,0_r]$
contains a non-trivial element provided $0_i : \mathbb{S}^{m_i}\to X$ are trivial for $i=1,\dots,r$ with $r\ge 4$.

\subsubsection*{Acknowledgement} The authors are deeply grateful to the referee for a number of invaluable suggestions to work on the final version,
especially on Proposition \ref{prop.order.15}. 
They would like to thank Martin Arkowitz for a very helpful discussion and indicating the reference \cite{ando} as well.

\section{Preliminaries}\label{sec.preliminaries}
Denote by $\pi(X,Y)$ the set of (based) homotopy classes of (based) maps $X\to Y$. In the sequel we do not distinguish between a map
and its homotopy class. Write $C(-)$ and $\Sigma(-)$ for the reduced cone and the reduced suspension functors, respectively. As in Toda \cite{toda},
$\iota_n:\mathbb{S}^n\to \mathbb{S}^n$ denotes the identity map of the  $n$-sphere $\mathbb{S}^n$, and
$\eta_n=\Sigma^{n-2}\eta_2:\mathbb{S}^{n+1}\to \mathbb{S}^n$ for $n\geq 2$,  is the iterated suspension of the Hopf map
$\eta_2:\mathbb{S}^3\to \mathbb{S}^2$. We also use freely other symbols from \cite{toda} and write $\iota_X$ for the identity map on a space $X$.

\subsection{Relative generalized Whitehead product}\label{sec.RelGWP} 
Let $A$ and $B$ be spaces. Given maps $f:\Sigma A\to X$ and
$g: \Sigma B \to X$, Arkowitz constructed in \cite{ark} \textit{the Whitehead map}  \begin{equation}\label{eq.whitehead.map.classical}
\omega:\Sigma (A\wedge B) \to \Sigma A\vee \Sigma B
\end{equation} to define  the generalized Whitehead product $[f,g]$ as the map $\nabla (f\vee g) \omega:\Sigma(A\wedge B)\to X$,
where  $\nabla : X\vee X \to X$ is the folding map and $A\wedge B$ is the smash product.
Also, from \cite[Proposition~3.3]{ark}, the product is anti-commutative, that is, $[g,f]=-(\Sigma\sigma)^*[f,g]$, where $\sigma : B\wedge A\to A\wedge B$ is the homeomorphism determined by the  map $B\times A\to A\times B$.

In \cite{ando}, Ando defined the \textit{relative generalized Whitehead product}, briefly described as follows. Let $i_Z:Z\hookrightarrow CZ$ be
the inclusion map. Given $k:X\to Y$, we say that $(h_1,h_2):i_Z\to k$ is a \textit{pair-map} provided the diagram
\[\xymatrix{Z\ar[r]^{h_2} \ar@{^{(}->}[d]_{i_Z} & X \ar[d]^{k} \\ CZ \ar[r]^{h_1} & Y } \] is commutative. Write $\pi_1(Z,k)$ for the set of
homotopy classes of pair-maps $(h_1,h_2):i_Z\to k$.

Note that if $Z=\Sigma Z'$  then the standard co-$H$-structure $\nu : \Sigma Z'\to \Sigma Z'\vee \Sigma Z'$
leads to a pair-map
\[\xymatrix@C=1.5cm{\Sigma Z'\ar[r]^-{\nu}  \ar@{^{(}->}[d]_{i_{\Sigma Z'}} & \Sigma Z'\vee \Sigma Z' \ar@{^{(}->}[d]^{i_{\Sigma Z'}\vee i_{\Sigma Z'}} \\
C(\Sigma Z') \ar[r]^-{C\nu} & C(\Sigma Z')\vee C(\Sigma Z') \rlap{,} } \]
which determines a group structure on $\pi_1(Z,k)$. Moreover, this group is abelian provided $Z'=\Sigma Z''$.
In particular, if $k:X_0\hookrightarrow X$ is an inclusion map and $Z=\mathbb{S}^n$, we obtain the ordinary relative homotopy groups
$\pi_1(\mathbb{S}^n,k)=\pi_{n+1}(X,X_0)=[(\mathbb{D}^{n+1},\mathbb{S}^n),(X,X_0)]$, where $\mathbb{D}^{n+1}=C\mathbb{S}^n$ is the $(n+1)$-disc.

Further, the map $k:X\to Y$ implies  the dual Puppe long exact sequence
\begin{equation}\label{eq.seq.puppe}
\dots \to \pi(\Sigma^2Z,X) \xrightarrow{k_\ast} \pi(\Sigma^2Z,Y) \xrightarrow{j_\ast}  \pi_1(\Sigma Z,k)  \xrightarrow{\delta_\ast} \pi(\Sigma Z,X) \xrightarrow{k_\ast} \dotsb.
\end{equation}

For $k=i_{\Sigma A}\vee \iota_{\Sigma B}$, according to \cite[Section~2(b)]{ando}, there is a commutative diagram
\[
\xymatrix@C=.7cm{ \pi_1(\Sigma Z,i_{\Sigma A}\vee \iota_{\Sigma B}) \ar[r]^-{\delta_\ast} &
\pi(\Sigma Z,\Sigma A\vee \Sigma B) \ar[rr]^-{(i_{\Sigma A}\vee \iota_{\Sigma B})_*} \ar[d]_{\kappa_1} && \pi(\Sigma Z,C(\Sigma A)\vee \Sigma B) \ar[d]^{\kappa_2} \\
& \pi(\Sigma Z,\Sigma A\times \Sigma B) \ar[rr]^-{(i_{\Sigma A}\times \iota_{\Sigma B})_*} & &
\pi(\Sigma Z,C(\Sigma A)\times \Sigma B) \rlap{,} }
\]
where the vertical maps $\kappa_i$ for $i=1,2$ are induced by the inclusions of the wedge into the product, and the top row is exact.

By~\cite[Theorem~(2.3)]{porter}, $(i_{\Sigma A}\times \iota_{\Sigma B})_*\kappa_1(\omega)=0$ and by the commutativity of the diagram above, and
the fact that $\kappa_2$ is isomorphism, $(i_{\Sigma A}\vee \iota_{\Sigma B})_*(\omega)=0$. Note that
$(i_{\Sigma A}\vee \iota_{\Sigma B})_\ast : \pi(\Sigma Z,\Sigma A\vee \Sigma B)\to \pi(\Sigma Z,C(\Sigma A) \vee \Sigma B)$  is an epimorphism, and so $\delta_*$ is a monomorphism. Thus, by the exactness of the top row, there is a unique element
$\overline{\omega}\in \pi_1(\Sigma Z,i_{\Sigma A}\vee \iota_{\Sigma B})$ such that $\delta_\ast(\overline{\omega})=\omega$.

Let $k:X\to Y$ and $Z=A\wedge B$ in the above. Then, given $f=(f_1,f_2)\in \pi_1(\Sigma A,k)$ and $g\in \pi(\Sigma B,X)$, the element $\overline{\omega}$ determines a pair-map $(\omega',\omega)$ commuting the diagram
\[ \xymatrix@C=1.5cm{
\Sigma(A\wedge B) \ar[r]^-{\omega} \ar@{^{(}->}[d]_{i_{\Sigma(A\wedge B)}} & \Sigma A\vee \Sigma B \ar@{^{(}->}[d]^{i_{\Sigma A}\vee \iota_{\Sigma B}} \ar[r]^-{f_2 \vee g} & X\vee X \ar[d]^{k\vee k}\ar[r]^-{\nabla} & X \ar[d]^{k} \\
C\Sigma(A\wedge B) \ar[r]^-{\omega'} & C(\Sigma A)\vee \Sigma B \ar[r]^-{f_1 \vee kg} & Y\vee Y \ar[r]^-{\nabla} & Y \rlap{.}
} \]
As in \cite{ando}, the
\textit{relative generalized Whitehead product} $[f,g]_R\in \pi_1(\Sigma(A\wedge B),k)$ is defined by the pair-map \begin{equation*}\label{eq.RelGWP}
\bigl(\nabla(f_1\vee kg),\nabla(f_2\vee g)\bigr)_*(\overline{\omega})=\bigl(\nabla(f_1\vee kg)\omega',\nabla(f_2\vee g)\omega\bigr).
\end{equation*}
Also therein, Ando compares his construction to the one introduced by Hardie in \cite{hardie}. We describe that here to obtain
Proposition~\ref{prop.separation}.

First, since the generalized Whitehead product $[0,g]$ is trivial and the product $\Sigma A\times \Sigma B$ is a push-out,
we obtain a map $\lambda_A(g):C\Sigma(A\wedge B)\to X$ commuting the diagram
\[
\xymatrix{
\Sigma(A\wedge B) \ar@{^{(}->}[d] \ar[r]^{\omega} & \Sigma A\vee \Sigma B \ar[rr]^-{\nabla(0\vee g)} \ar@{^{(}->}[d] & & X \ar@{=}[d] \\
C\Sigma(A\wedge B)  \ar[r]\ar@/_3ex/[rrr]_(.5){\lambda_A(g)} & \Sigma A\times\Sigma B \ar[r]^-{p_B} &\Sigma B \ar[r]^{g} & X \rlap{,}
}
\]
where $p_B:\Sigma A\times \Sigma B\to \Sigma B$ is the projection map.

Let $[f_1,kg]':C\Sigma(A\wedge B)\to Y$ be given by
\begin{equation*}
[f_1,kg]' [y,t]=\begin{cases}
\lambda_A(kg)[y,2t], & 0\leq t\leq \frac{1}{2}, \\[1ex]
[f_1\sigma'_{2t-1},kg](y), & \frac{1}{2} \leq t \leq 1,
\end{cases}
\end{equation*}
where $y\in \Sigma(A\wedge B)$, and $\sigma'_{2t-1}:\Sigma A\to C\Sigma A$ is given by $\sigma'_{2t-1}(x)=[x,2t-1]$ for $x\in \Sigma A$. Thus, by \cite[Theorem~(4.6)]{ando} we obtain
\begin{equation}[f,g]_R = \bigl( [f_1,kg]', [f_2,g] \bigr).\label{eq.trick}\end{equation}
Further,
\begin{equation}\label{eq.double}
\varphi[f_1,kg]'=[\varphi f_1,\varphi kg]'
\end{equation}
for $\varphi : Y\to Z$ and
\[ [(C\Sigma\alpha)^\ast f_1,(\Sigma \beta)^\ast kg]' = (C\Sigma(\alpha\wedge\beta))^\ast [f_1,kg]'\]
for $\alpha : A'\to A$ and $\beta : B'\to B$.

Next, suppose that $A=\Sigma A'$ and consider $[f_2,g]'':\Sigma(A\wedge B)\to X$ given~by
\begin{equation*}
[f_2,g]'' [y,t]=\begin{cases}
\lambda_A(g)[y,4t], & 0\leq t\leq \frac{1}{4}, \\[1ex]
[f_2\sigma_{\frac{1}{2}(4t-1)},g](y), & \frac{1}{4} \leq t \leq \frac{3}{4}, \\[1ex]
\lambda_A(g)[y,4-4t], & \frac{3}{4}\leq t\leq 1,
\end{cases}
\end{equation*}
where $y\in \Sigma(A\wedge B)$, and $\sigma_{\frac{1}{2}(4t-1)}:A\to \Sigma A$ is given by
$\sigma_{\frac{1}{2}(4t-1)}(a)=[a,\frac{1}{2}(4t-1)]$ for $a\in A$. Note that for $t=\frac{1}{4}$, $f_2\sigma_0=0$ and $\lambda_A(g)[y,1]=[0,g](y)$,
and the same for $t=\frac{3}{4}$. So, the map $[f_2,g]''$ is well defined and $[f_2,g]''= [f_2,g]$ (cf.~\cite[Theorem~(2.4)]{hardie}).

\begin{remark} We work with relative Whitehead products for an inclusion map $k:X_0\hookrightarrow X$. It means that  (homotopy classes
of) maps $f: (C\Sigma A,\Sigma A)\to  (X,X_0)$ and $g: \Sigma B  \to X_0$ determine the map of pairs
\begin{equation*}
[f,g]_R: (C\Sigma (A\wedge B),\Sigma (A\wedge B)) \to  (X,X_0).
\end{equation*}
\end{remark}

We finish this section with some properties of the relative product $[-{,}-]_R$.

Let $f,f_i\in \pi_1(\Sigma A,k)$ and $g,g_i\in \pi(\Sigma B,X)$ for $i=1,2$. According to \cite[Proposition~(4.5)]{ando},
if $A$ and $B$ are suspensions then $[f,g_1+g_2]_R=[f,g_1]_R+[f,g_2]_R$ and
$[f_1+f_2,g]_R=[f_1,g]_R+[f_2,g]_R$. Also, \cite[Theorem~(4.8)]{ando} states that
$[g,f]_R=-(C\Sigma\sigma,\Sigma\sigma)^*[f,g]_R$, where $\sigma:B\wedge A\to A\wedge B$ is the canonical homeomorphism.

\subsection{Generalized separation element}\label{sec.sep.map}
Given maps $f,g: CA \to X$ such that $f_{\vert A}=g_{\vert A}$, following James
\cite[Section~10]{james} and Tsuchida \cite[Section~3]{TK}, we define the \textit{generalized separation element} as (the homotopy
class of) the map $d(f,g):\Sigma A \to X$ given by:
\begin{equation*}
d(f,g)[a,t]=\begin{cases}
f[a,2t], & 0\leq t\leq \frac{1}{2}, \\[1ex]
g[a,2-2t], & \frac{1}{2}\leq t\leq 1.
\end{cases}
\end{equation*}
Further, if $A=\Sigma A'$ then we consider
\[f+g :C\Sigma A'\xrightarrow{C\nu} C(\Sigma A')\vee C(\Sigma A')\xrightarrow{\nabla(f\vee g)} X.\]
Then, following (\textit{mutatis mutandis}) Hardie \cite[Section~1]{hardie} and James \cite[Section~10]{james}, we may state:
\begin{proposition}\label{prop.sep.properties}Let $f,g,h:CA\to X$ be maps such that $f_{\vert A}=g_{\vert A}=h_{\vert A}$, and let $k:X\to Y$ be any map. Then:
\begin{enumerate}[label=\textup{(\roman*)}]
\item\label{prop.dif.1} triviality: $d(f,f)=0$;
\item\label{prop.dif.2} triangularity: $d(f,g)+d(g,h)=d(f,h)$;
\item\label{prop.dif.3} antisymmetry: $d(g,f)=-d(f,g)$;
\item\label{prop.dif.4} naturality: $d(k f,k g)=k  d(f,g)$;
\item\label{prop.dif.4.1} co-naturality: $d((C\alpha)^\ast f,(C\alpha)^\ast g)=(\Sigma \alpha)^\ast d(f,g)$ for $\alpha:A'\to A$.
\end{enumerate}

Moreover, let $f_i,g_i:C\Sigma A\to X$ be maps for $i=1,2$ such that $f_{1\vert \Sigma A}=f_{2\vert \Sigma A}$ and  $g_{1\vert \Sigma A}=g_{2\vert \Sigma A}$. Then:
\begin{enumerate}[resume*]
\item\label{prop.dif.5}  $d(f_1+g_1,f_2+g_2)=d(f_1,f_2)+d(g_1,g_2)$;

\item\label{prop.dif.6} if  $\varphi: \Sigma A \to X$ then there exists $f':CA\to X$ such that $f'_{\vert A}=f_{\vert A}$ and $-\varphi=d(f,f')$;
\end{enumerate}

Let $j_\ast$ and $\delta_\ast$ be the maps as in the exact sequence~\eqref{eq.seq.puppe}. Then:
\begin{enumerate}[resume*]
\item\label{prop.dif.7} if $\delta_\ast(f',f'')=0$ for some $(f',f'')\in \pi_1(\Sigma A,k)$ and $g\in \pi(C\Sigma A,X)$ such that
$g_{\vert \Sigma A}=f'_{\vert \Sigma A}$ then $j_*d(f',kg)=(f',f'')$. In addition for any $h\in j_\ast^{-1}(f',f'')$ there exists $g'\in \pi(C\Sigma A,X)$ with $g'_{\vert \Sigma A}=f'_{\vert \Sigma A}$ such that $h=d(f',kg')$.
\end{enumerate}
\end{proposition}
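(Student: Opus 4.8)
The plan is to establish the properties of $d(f,g)$ essentially in the order listed, since the later ones depend on the earlier. Properties~\ref{prop.dif.1}--\ref{prop.dif.4.1} are formal consequences of the pinch-map description of $d(f,g)$ and should be handled exactly as in James \cite[Section~10]{james} and Hardie \cite[Section~1]{hardie}. For \ref{prop.dif.1}, the map $d(f,f)$ factors through the fold of two copies of the mapping cylinder data and is null-homotopic by a straight-line contraction in the $t$-coordinate. For \ref{prop.dif.2}, one uses the standard co-$H$ pinch $\Sigma A\to\Sigma A\vee\Sigma A$ on the suspension coordinate and compares the three pinched maps; alternatively, one observes that $d(f,g)$ is precisely the image of $(f,g)$ under a connecting-type map and that triangularity is the cocycle identity for that. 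Property~\ref{prop.dif.3} follows by reversing the $t$-coordinate, which on $\Sigma A$ is the inverse in the co-$H$ structure. Property~\ref{prop.dif.4} is immediate from the pointwise formula, since $k$ is applied after $f$ and $g$; likewise \ref{prop.dif.4.1} is immediate by precomposing the pointwise formula with $C\alpha$ and noting that on $\Sigma A$ this is $\Sigma\alpha$.

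For \ref{prop.dif.5}, I would use the definition $f_i+g_i=\nabla(f_i\vee g_i)\circ C\nu$ on $C\Sigma A$, together with \ref{prop.dif.4} (naturality applied to $\nabla$) and the additivity of $d$ under the co-$H$ comultiplication; concretely, $d(f_1+g_1,f_2+g_2)=d\bigl(\nabla(f_1\vee g_1)C\nu,\nabla(f_2\vee g_2)C\nu\bigr)$, and one pushes $d$ past $\nabla$ and the pinch to split it as $d(f_1,f_2)+d(g_1,g_2)$. This is where one genuinely needs $A$ to be a suspension, so that $C\Sigma A$ carries the relevant co-$H$ structure compatibly with the cone coordinate. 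For \ref{prop.dif.6}, given $\varphi:\Sigma A\to X$, I would realize $-\varphi$ as a separation element directly: define $f'$ by "grafting" $\varphi$ onto $f$ near the base of the cone. Explicitly, set $f'[a,t]$ to agree with $f[a,\cdot]$ on an inner subcone and to trace out $\varphi$ on the outer collar, reparametrized so that $f'_{\vert A}=f_{\vert A}$; unwinding the definition of $d$ then gives $d(f,f')=-\varphi$ up to the usual homotopy. This is the content of James \cite[Section~10]{james} and requires only care with the collar parametrization.

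The main obstacle is \ref{prop.dif.7}, which ties the separation element to the dual Puppe sequence~\eqref{eq.seq.puppe} and to $\pi_1(\Sigma A,k)$. Here the strategy is: given $(f',f'')\in\pi_1(\Sigma A,k)$ with $\delta_\ast(f',f'')=0$ and $g\in\pi(C\Sigma A,X)$ with $g_{\vert\Sigma A}=f'_{\vert\Sigma A}$, the element $kg:C\Sigma A\to Y$ and $f'$ share a common restriction to $\Sigma A$ after composing with $k$, so $d(f',kg)\in\pi(\Sigma^2 A,Y)$ is defined once one suspends appropriately — more precisely, $d$ here is taken on the cone coordinate of $C\Sigma A$, producing an element of $\pi(\Sigma(\Sigma A),Y)=\pi(\Sigma^2A,Y)$, the domain of $j_\ast$. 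One must then verify $j_\ast d(f',kg)=(f',f'')$ by chasing the commutative ladder defining $j_\ast$ and $\delta_\ast$: the hypothesis $\delta_\ast(f',f'')=0$ is exactly what guarantees that the pair-map $(f',f'')$ lifts, and the lift is witnessed by $g$, with the "defect" of the lift measured precisely by $d(f',kg)$. For the second assertion, given any $h\in j_\ast^{-1}(f',f'')$, exactness of~\eqref{eq.seq.puppe} shows $h$ differs from $d(f',kg)$ by something in the image of $k_\ast$, i.e.\ by $k_\ast(\psi)$ for some $\psi\in\pi(\Sigma^2A,X)$; using \ref{prop.dif.6} one absorbs $-\psi$ into a new cone-null-homotopy $g'$ of $f'_{\vert\Sigma A}$, so that $h=d(f',kg')$. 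The delicate point will be matching conventions — the direction of $\delta_\ast$, the identification of $d(f',kg)$ as living in $\pi(\Sigma^2A,Y)$ rather than $\pi(\Sigma A,Y)$, and the sign in \ref{prop.dif.3}/\ref{prop.dif.6} — so that the diagram chase closes up on the nose; once the bookkeeping is fixed this is a routine exactness argument in the spirit of Hardie \cite[Section~1]{hardie}.
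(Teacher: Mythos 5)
Your proposal matches the paper's proof in both structure and substance: the paper likewise treats \ref{prop.dif.1}--\ref{prop.dif.5} as immediate from the definition of $d$, proves \ref{prop.dif.6} by grafting $\varphi$ onto $f$ along the cone coordinate exactly as you describe, and proves \ref{prop.dif.7} by the exactness chase you outline (the paper simply defers to Hardie's Theorem~1.9, so your sketch is if anything more explicit). The one point to fix in \ref{prop.dif.6} is the placement: since $\varphi[a,0]=\varphi[a,1]=\ast$, the copy of $\varphi$ must be inserted at the cone-point end of $CA$ (the paper uses $\varphi[a,2t]$ for $0\le t\le\tfrac{1}{2}$) with $f$ reparametrized onto the part adjacent to $A$, not $\varphi$ on the outer collar as you wrote, or else $f'_{\vert A}=f_{\vert A}$ fails whenever $f_{\vert A}$ is non-trivial.
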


\begin{proof}
Since the items \ref{prop.dif.1}--\ref{prop.dif.5} follow directly from the definition of $d$, we just prove \ref{prop.dif.6} and sketch a proof of  \ref{prop.dif.7}.
\par For \ref{prop.dif.6}, let $q:A\times I \to CA$ be the quotient map such that the inclusion $A\hookrightarrow CA$ is given by $a\mapsto q(a,1)=[a,1]$ for $a\in A$. Define $f':CA\to X$ by
\[f'[a,t]=\begin{cases} \varphi[a,2t], & 0\leq t \leq \frac{1}{2},\\[1ex]
f q (a,2t-1), & \frac{1}{2}\leq t \leq 1,
\end{cases}\]
and note that for $t=\frac{1}{2}$, $\varphi[a,1]=\ast=f q(a,0)$, and for $t=1$, $f'[a,1]=f q(a,1)=f[a,1]$, that is, $f_{\vert A}=f'_{\vert A}$. After some computations and changes of parameters we can show that: \[d(f,f')[a,t]=\begin{cases}
f[a,\frac{8}{3}t], & 0\leq t \leq \frac{3}{8}, \\[1ex]
f[a,2-\frac{8}{3}t], &  \frac{3}{8}\leq t \leq \frac{3}{4}, \\[1ex]
\varphi[a,4(1-t)], & \frac{3}{4}\leq t \leq 1.
\end{cases}\]
Since the first two parts provide a map homotopic to $d(f,f)=0$ and the third part is homotopic to $-\varphi$, we obtain $-\varphi= d(f,f')$.

For \ref{prop.dif.7}, we note that the map $j_*:\pi(\Sigma^2A,Y)\to \pi_1(\Sigma A,k)$ is determined by the map $(C\Sigma A,\Sigma A)\to (\Sigma^2A,\ast)$. Then, we simply mimic the proof of \cite[Theorem~1.9]{hardie}.
\end{proof}

We finish this section by stating a relation between the separation element and the products defined in Section~\ref{sec.RelGWP} (cf.\ \cite[(2.7)]{hardie}).

\begin{proposition}\label{prop.separation} Let $f=(f_1,f_2)$, $f'=(f_1',f_2')\in \pi_1(\Sigma A,k)$ and $g\in \pi(\Sigma B,X)$, where $k:X\to Y$. Suppose that $f_2=f_2':\Sigma A\to X$.
Then \begin{equation*}
d\bigl( [f_1,kg]', [f_1',kg]' \bigr)= [ d(f_1,f_1') ,kg ]:\Sigma^ 2(A\wedge B)\to Y.
\end{equation*}
\end{proposition}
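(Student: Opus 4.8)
The plan is to reduce the claimed identity to the formula \eqref{eq.double} together with the additivity and co-naturality properties of the separation element from Proposition~\ref{prop.sep.properties}. First I would recall from \eqref{eq.trick} that $[f,g]_R = ([f_1,kg]',[f_2,g])$, so that the map $[f_1,kg]'\colon C\Sigma(A\wedge B)\to Y$ is genuinely the "cone half" of the relative product; since $f_2 = f_2'$ by hypothesis, the two relative products $[f,g]_R$ and $[f',g]_R$ have the same base component $[f_2,g]$, and only the cone components $[f_1,kg]'$ and $[f_1',kg]'$ differ. Both of these are maps $C\Sigma(A\wedge B)\to Y$ whose restrictions to $\Sigma(A\wedge B)$ agree (they both equal $[f_2,g]=[f_2',g]$, pushed into $Y$ by $k$), so the separation element $d([f_1,kg]',[f_1',kg]')\colon \Sigma^2(A\wedge B)\to Y$ is defined.

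Next I would set up the "linearity in the cone variable" of the explicit model for $[f_1,kg]'$. Looking at its definition, on the lower half $0\le t\le \tfrac12$ it is $\lambda_A(kg)$, which does not involve $f_1$ at all, and on the upper half $\tfrac12\le t\le 1$ it is $[f_1\sigma'_{2t-1},kg]$, i.e.\ essentially the ordinary generalized Whitehead product applied to a "cone-coordinate reparametrization" of $f_1$. The key point is that for fixed $g$ the assignment $f_1\mapsto [f_1,kg]'$ is compatible with the co-$H$ addition used to define $d$: concretely, $[f_1+f_1',kg]'$ is homotopic (rel $\Sigma(A\wedge B)$) to a map built from $[f_1,kg]'$ and $[f_1',kg]'$ via $C\nu$ and the folding map, exactly as in the definition of $d$ on cone maps. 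Granting this, one computes $d([f_1,kg]',[f_1',kg]')$ by unwinding the definition of $d$ on $C\Sigma(A\wedge B)$: the lower halves $\lambda_A(kg)$ cancel (they are independent of the $f_1$-variable, contributing a nullhomotopic piece just as $d(f,f)=0$ in Proposition~\ref{prop.sep.properties}\ref{prop.dif.1}), and the upper halves combine to produce $[d(f_1,f_1'),kg]$ after a change of parameters identifying the separation element $d(f_1\sigma'_{\,\cdot\,}, f_1'\sigma'_{\,\cdot\,})$ on the cone coordinate with $d(f_1,f_1')\colon \Sigma^2 A\to X$ on the suspension coordinate. Alternatively, and perhaps more cleanly, one uses $[d(f_1,f_1'),kg]''$-type reparametrizations paralleling the $[f_2,g]''$ construction preceding the Remark, together with \eqref{eq.double} to pass $k$ through.

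The main obstacle I expect is the bookkeeping of parameters: $[f_1,kg]'$ already has a two-stage definition on the cone coordinate, and forming $d$ of two such maps introduces a further subdivision of $\Sigma^2(A\wedge B)$, so one must carefully exhibit an explicit homotopy (with frozen restriction to $\Sigma(A\wedge B)$) collapsing the $\lambda_A(kg)$-contributions and re-identifying the surviving part with $[d(f_1,f_1'),kg]$. This is the same kind of "some computations and changes of parameters" argument used in the proof of Proposition~\ref{prop.sep.properties}\ref{prop.dif.6}, and in Hardie's \cite[(2.7)]{hardie}; the cleanest writeup is probably to first prove the auxiliary homotopy $[f_1+f_1',kg]' \simeq [f_1,kg]'\!+\![f_1',kg]'$ rel $\Sigma(A\wedge B)$ (the cone-variable analogue of bi-additivity), from which the stated formula follows formally by applying Proposition~\ref{prop.sep.properties}\ref{prop.dif.6} and the triangularity/antisymmetry in Proposition~\ref{prop.sep.properties}\ref{prop.dif.2}--\ref{prop.dif.3}, exactly mirroring how separation elements interact with sums in \cite[Section~1]{hardie} and \cite[Section~10]{james}.
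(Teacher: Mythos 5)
Your proposal is correct and follows essentially the same route as the paper: one first uses $f_2=f_2'$ to see that both maps restrict to $k_*[f_2,g]$ on $\Sigma(A\wedge B)$ so that the separation element is defined, and then identifies $d([f_1,kg]',[f_1',kg]')$ by direct reparametrization with $[d(f_1,f_1'),kg]''=[d(f_1,f_1'),kg]$. Note only that in the paper's identification the two $\lambda_A(kg)$-pieces do not cancel but become the outer legs of the $[{-},kg]''$-model, which is exactly the ``cleaner alternative'' you describe at the end.
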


\begin{proof}
Just observe that the restrictions of  $[f_1,kg]'$ and $[f_1',kg]'$ to $\Sigma(A\wedge B)$ are equal to $k_*[f_2,g]$
and $k_*[f_2',g]$, respectively. Since $f_2=f_2'$, we get $k_*[f_2,g]=k_*[f_2',g]$ and $d\bigl( [f_1,kg]', [f_1',kg]' \bigr)$ is defined.
Then by the definition of the generalized separation element and the discussion above $d\bigl( [f_1,kg]', [f_1',kg]' \bigr)=[ d(f_1,f_1') ,kg ]''=
[ d(f_1,f_1') ,kg ]$,
 and the proof follows.
\end{proof}

\section{Higher order generalized Whitehead product}\label{sec.HOGWP}
Let $r\geq 2$ be an integer and denote by $\underline{A}=(A_1,\dots ,A_r)$ an $r$-tuple of topological spaces with base points $\ast$. The \textit{fat wedge} of $\underline{A}$ is the space
\[FW(\underline{A})=\bigl\{(a_1,\dots ,a_r)\in A_1\times \cdots\times A_r \mid \text{at least one }a_i=\ast \bigr\}.\]
If $r=2$, then $FW(\underline{A})=A_1\vee A_2$ is the wedge sum.

Following Porter's notation \cite{porter}, let
$T_0(\underline{A})= A_1\times\cdots\times A_r$ and $T_s(\underline{A}) \subseteq T_0(\underline{A})$ be the subset
of points $(a_1,\dots ,a_r)$ with at least $s$ coordinates $a_i=\ast$ for $s=1,\dots ,r$. Thus, $T_{r-1}(\underline{A})= A_1\vee \cdots
\vee  A_r$ is the wedge sum and $T_1(\underline{A})=FW(\underline{A})$ is the fat wedge. Also, we write $\Lambda(\underline{A})=A_1\wedge\cdots\wedge A_r$  and $\Sigma(\underline{A})=(\Sigma A_1,\dots,\Sigma A_r)$.

\begin{remark}\label{rem.inclusions}In the sequel, we need to take some coordinate $a_{i_0}=\ast$. To do this, we define
\begin{equation*}\label{eq.T0i}
T_0^{(i)}(\underline{A}) =  \textstyle\prod\limits_{\substack{j=1\\ j\neq i}}^{r}  A_j, \quad \Lambda^{(i)}(\underline{A})=\textstyle\bigwedge\limits_{\substack{j=1\\ j\neq i}}^{r} A_j
\end{equation*}
for $i=1,\dots ,r$.
In this way, the subset $T_s^{(i)}(\underline{A}) \subseteq T_0^{(i)}(\underline{A})$ is defined for $s=1,\dots ,r-1$ and has a
self-explanatory notation. Also, there are canonical embeddings
$\Psi_s^{(i)}:T_s^{(i)}(\underline{A}) \hookrightarrow T_{s+1}(\underline{A})$ for any $s=0,\dots ,r-1$.
\end{remark}

In \cite{porter}, Porter constructed \textit{the generalized Whitehead map} (cf.~\eqref{eq.whitehead.map.classical})
\begin{equation}\label{eq.whitehead.map.generalized}
\omega_r : \Sigma^{r-1}\Lambda(\underline{A})\to T_1\Sigma(\underline{A})
\end{equation}
to define the \textit{$r$\textsuperscript{th} order generalized Whitehead product} $\omega_r(f)\in \pi(\Sigma^{r-1}\Lambda(\underline{A}),X)$ of a map $f:T_1\Sigma(\underline{A}) \to X$ as the composite
\begin{equation*}\label{eq.r-WP.}
\omega_r(f): \Sigma^{r-1}\Lambda(\underline{A}) \xrightarrow{\omega_r} T_1\Sigma(\underline{A}) \stackrel{f}{\longrightarrow} X.
\end{equation*}

Given $f_i:\Sigma A_i \to X$ for $i=1,\dots ,r$, let  $f_1\vee\cdots\vee f_r:T_{r-1}\Sigma (\underline{A}) \to X$ be the wedge sum map. The (possibly empty) set
\begin{equation*}
[f_1,\dots ,f_r]=\bigl\{\omega_r(f)\in \pi(\Sigma^{r-1}\Lambda(\underline{A}),X) \bigr\},\end{equation*}  for all
$f:T_1\Sigma(\underline{A}) \to X$ extending $f_1\vee\cdots\vee f_r$,
is called the \textit{$r$\textsuperscript{th} order generalized Whitehead product} of $f_1,\dots ,f_r$.  Hardie \cite{hardie1} gave
the definition of $[f_1,\dots ,f_r]$ when all the $A_i$'s are spheres (called the $r$\textsuperscript{th} order spherical Whitehead product).

In virtue of \cite[Theorem~(2.7)]{porter}, it is non-empty if and only if all the lower products $[f_{m_1},\dots ,f_{m_k}]$ for $1\le m_1\le \cdots \le m_k\le r$ with $k=2,\dots , r-1$ contain the zero element $0$. We note that the set $[f_1,\dots ,f_r]$ can be empty even if $f_i=0$ for some $i$. This is the case for $[0,\iota_2,\iota_2]$, since the classical Whitehead product $[\iota_2,\iota_2]=2\eta_2\neq 0$, by \cite[Chapter V]{toda}.

\begin{remark}\label{rem.permutation}In the sequel we will use the same notation $\sigma$ for a permutation of the set $\{1,\dots,r\}$ and for its induced homeomorphism $A_1\wedge\dots\wedge A_r\to A_{\sigma(1)}\wedge\dots\wedge A_{\sigma(r)}$.
\end{remark}

\begin{proposition}\label{prop.permutation}
{Let $f_i:\Sigma A_i \to X$ for $i=1,\dots ,r$ and $\sigma\in S_r$ be a permutation of the set $\{1,\dots ,r\}$. Then
\[ [f_{\sigma(1)},\dots ,f_{\sigma(r)}]=\operatorname{sgn}(\sigma)(\Sigma^{r-1}\sigma)^*[f_1,\dots ,f_r]. \]}
\end{proposition}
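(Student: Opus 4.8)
The plan is to reduce the general permutation case to the case of a single transposition of adjacent elements, since the transpositions $(i,i+1)$ for $i=1,\dots,r-1$ generate the symmetric group $S_r$ and the sign is multiplicative. More precisely, if $\sigma=\tau_1\cdots\tau_k$ is a product of adjacent transpositions, I would apply the transposition case $k$ times, composing the induced homeomorphisms on $\Lambda(\underline{A})$ and multiplying the signs; the bookkeeping here is routine provided one is careful that the homeomorphism associated to a composite permutation is the composite of the homeomorphisms (Remark~\ref{rem.permutation}) and that $\Sigma^{r-1}(\sigma\tau)=\Sigma^{r-1}\sigma\circ\Sigma^{r-1}\tau$, so $(\Sigma^{r-1}(\sigma\tau))^*=(\Sigma^{r-1}\tau)^*(\Sigma^{r-1}\sigma)^*$.

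For the base case, fix $i$ and let $\sigma=(i,i+1)$. The key observation is that the fat wedge construction and Porter's Whitehead map $\omega_r$ are natural with respect to permuting the factors: interchanging the $i$\textsuperscript{th} and $(i+1)$\textsuperscript{th} coordinates gives a homeomorphism $\Theta\colon T_1\Sigma(\underline{A})\to T_1\Sigma(\sigma\underline{A})$ and a homeomorphism $\Sigma^{r-1}\sigma\colon \Sigma^{r-1}\Lambda(\underline{A})\to\Sigma^{r-1}\Lambda(\sigma\underline{A})$, and one needs the compatibility $\omega_r^{\sigma\underline{A}}\circ(\Sigma^{r-1}\sigma)\simeq \pm\,\Theta\circ\omega_r^{\underline{A}}$, with the sign being $-1=\operatorname{sgn}(\sigma)$. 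This is essentially the statement that $\omega_r$ is built from the classical Whitehead map $\omega_2$ (which is anti-commutative by \cite[Proposition~3.3]{ark}, as recalled for the generalized product in Section~\ref{sec.RelGWP}) together with smash products, in a way that is equivariant up to the sign coming from the $2$-cell attaching map of $\Sigma A_i\times\Sigma A_{i+1}$. Granting this, for a map $f\colon T_1\Sigma(\underline{A})\to X$ extending $f_1\vee\cdots\vee f_r$, the map $f\circ\Theta^{-1}\colon T_1\Sigma(\sigma\underline{A})\to X$ extends $f_{\sigma(1)}\vee\cdots\vee f_{\sigma(r)}$, this correspondence $f\mapsto f\Theta^{-1}$ is a bijection between the relevant extension sets, and
\[
\omega_r^{\sigma\underline{A}}(f\Theta^{-1})=f\Theta^{-1}\omega_r^{\sigma\underline{A}}=f\Theta^{-1}\bigl(\pm\Theta\,\omega_r^{\underline{A}}\bigr)(\Sigma^{r-1}\sigma)^{-1}=\operatorname{sgn}(\sigma)\,\omega_r(f)\circ(\Sigma^{r-1}\sigma)^{-1},
\]
which upon reindexing (pulling back along $\Sigma^{r-1}\sigma$) yields $[f_{\sigma(1)},\dots,f_{\sigma(r)}]=\operatorname{sgn}(\sigma)(\Sigma^{r-1}\sigma)^*[f_1,\dots,f_r]$ as sets.

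The main obstacle is establishing the sign in the equivariance relation $\omega_r^{\sigma\underline{A}}\circ(\Sigma^{r-1}\sigma)\simeq\operatorname{sgn}(\sigma)\,\Theta\circ\omega_r^{\underline{A}}$ for an adjacent transposition. One route is to unwind Porter's inductive/explicit description of $\omega_r$ in \cite{porter} and track how swapping two adjacent suspension coordinates affects the attaching map, where the $-1$ enters exactly as in the classical anti-commutativity $[g,f]=-(\Sigma\sigma)^*[f,g]$; for non-adjacent transpositions one either argues directly or notes that any transposition is conjugate to an adjacent one by a permutation that permutes coordinates other than the two being swapped, on which $\omega_r$ is genuinely equivariant with sign $+1$, and then combines the factors. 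A cleaner alternative, if available from the cited literature, is to invoke the fact (implicit in \cite{porter}) that $\omega_r$ is characterized up to homotopy by the property that $(i_{\Sigma A_j})_*\omega_r=0$ for the various inclusions, a property manifestly symmetric under permutation, so that $\omega_r^{\sigma\underline{A}}\circ(\Sigma^{r-1}\sigma)$ and $\Theta\circ\omega_r^{\underline{A}}$ can differ only by an element detected on the fat wedge, and degree/suspension considerations pin down that element to $\operatorname{sgn}(\sigma)$ times the identity. Once this sign lemma is in hand, the rest of the argument is formal.
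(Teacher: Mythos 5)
Your argument is essentially the paper's: the paper likewise precomposes with the permutation homeomorphism $T_1\Sigma\sigma$ of the fat wedge to obtain a bijection between extensions of $f_1\vee\cdots\vee f_r$ and of $f_{\sigma(1)}\vee\cdots\vee f_{\sigma(r)}$, and then reduces everything to the sign-equivariance relation $\sigma\omega_r(\sigma f)=\operatorname{sgn}(\sigma)(\Sigma^{r-1}\sigma)^*(\omega_r(f))$, which is exactly the ``sign lemma'' you single out as the main obstacle. The paper simply asserts that relation for a general $\sigma$ (the sign arising from permuting the $r-1$ suspension coordinates), so your reduction to adjacent transpositions is an unnecessary but harmless extra layer, and your level of justification of the sign matches the paper's.
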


\begin{proof}Denote by $\sigma\underline{A}=(A_{\sigma(1)},\dots ,A_{\sigma(r)})$ the image of $\underline{A}$ under the permutation
$\sigma\in S_r$.
It is clear that there is a commutative diagram
\[
\xymatrix{
\Sigma^{r-1}\Lambda(\underline{A})  \ar[r]^{\omega_r} & T_1\Sigma(\underline{A})  \\
\Sigma^{r-1}\Lambda(\sigma\underline{A}) \ar[u]^{\Sigma^{r-1}\sigma} \ar[r]^{\sigma\omega_r} &
T_1\Sigma(\sigma\underline{A}) \ar[u]_{T_1\Sigma\sigma} \rlap{.}
}
\]
Let $f_{\sigma(1)}\vee \cdots\vee f_{\sigma(r)}=(T_{r-1}\Sigma\sigma)^*(f_1\vee\cdots\vee f_r): T_{r-1}\Sigma(\sigma\underline{A})\to X$. So, if $f:T_1\Sigma(\underline{A})\to X$
is an extension of $f_{1}\vee \cdots\vee f_{r}$ then $\sigma f=(T_1\Sigma\sigma)^*(f):T_1\Sigma(\sigma\underline{A})\to X$ is an extension
of $f_{\sigma(1)}\vee \cdots\vee f_{\sigma(r)}$. Finally, note that $\sigma\omega_r(\sigma f)=\operatorname{sgn}(\sigma)(\Sigma^{r-1}\sigma)^*(\omega_r(f))$ and the proof follows.
\end{proof}

According to \cite[Theorem~(2.3)]{porter} there is a homotopy equivalence
\begin{equation}T_1\Sigma(\underline{A})\cup_{\omega_r} C(\Sigma^{r-1}\Lambda(\underline{A}))\xrightarrow{\simeq} T_0\Sigma(\underline{A}).
\label{equ}\end{equation}
So, we have a pair-map $(\Omega_r,\omega_r)$ commuting the diagram
\[
\xymatrix{
\Sigma^{r-1}\Lambda(\underline{A}) \ar@{^{(}->}[d] \ar[r]^-{\omega_r} & T_1\Sigma(\underline{A})\ar@{^{(}->}[d] \\
C(\Sigma^{r-1}\Lambda(\underline{A})) \ar[r]^-{\Omega_r} & T_0\Sigma(\underline{A})\rlap{.}
}
\]

Considering the spaces and inclusions as in Remark~\ref{rem.inclusions}, let \[\theta: \textstyle\bigvee\limits_{i=1}^r \Sigma^{r-2}\Lambda^{(i)}
(\underline{A}) \to T_2\Sigma(\underline{A})\] be given by $\theta_{\vert\Sigma^{r-2}\Lambda^{(i)}(\underline{A})}=\Psi_1^{(i)} \omega_r^{(i)}$, where $\omega_r^{(i)}:\Sigma^{r-2}\Lambda^{(i)}(\underline{A})\to T_1^{(i)}\Sigma(\underline{A})$ is the generalized Whitehead map. Then, as a particular case of \cite[Theorem~(2.6)]{porter}, we have:
\begin{theorem}\label{teo.porter.2.6}{The following spaces are homotopy equivalent:
\begin{enumerate}[label=\textup{(\roman*)}]
\item $T_1\Sigma(\underline{A})$;
\item $T_2\Sigma(\underline{A})\cup_\theta C\left( \textstyle\bigvee\limits_{i=1}^r \Sigma^{r-2}\Lambda^{(i)}(\underline{A})\right)$;
\item $T_2\Sigma(\underline{A})\cup_\theta \textstyle\bigvee\limits_{i=1}^r C(  \Sigma^{r-2}\Lambda^{(i)}(\underline{A}))$.
\end{enumerate}}
\end{theorem}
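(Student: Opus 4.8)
The plan is to reduce Theorem~\ref{teo.porter.2.6} to the cited result \cite[Theorem~(2.6)]{porter}, verifying that the hypotheses specialize correctly to the case at hand. First I would recall that Porter's \cite[Theorem~(2.6)]{porter} describes, for a decreasing filtration $T_0\supseteq T_1\supseteq\cdots\supseteq T_{r-1}$, how each $T_s\Sigma(\underline{A})$ is built from the next stratum $T_{s+1}\Sigma(\underline{A})$ by attaching cones on a wedge of products $\Sigma^{\bullet}\Lambda^{(I)}(\underline{A})$ indexed by the $(s+1)$-element subsets $I$ of coordinates that get collapsed; equivalently, that the inclusion $T_{s+1}\Sigma(\underline{A})\hookrightarrow T_s\Sigma(\underline{A})$ is a cofibration whose cofibre is the relevant suspended smash, and the attaching map on each cone summand is $\Psi$ composed with the appropriate lower generalized Whitehead map $\omega^{(I)}$. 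Specializing to $s=1$ gives exactly the map $\theta$ defined just above the statement, since for $s=1$ the single omitted coordinate runs over $i=1,\dots,r$, and $\Lambda^{(i)}(\underline{A})$ is the smash of the remaining $r-1$ factors, carrying the suspension degree $r-2$ that makes $\omega_r^{(i)}:\Sigma^{r-2}\Lambda^{(i)}(\underline{A})\to T_1^{(i)}\Sigma(\underline{A})$ the generalized Whitehead map for the $(r-1)$-tuple $(A_j)_{j\neq i}$. This yields the homotopy equivalence between (i) and (ii).

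Next I would obtain the equivalence of (ii) and (iii). The point is that attaching a cone on a wedge $\bigvee_i Z_i$ via a map $\theta$ whose restriction to each $Z_i$ is a prescribed map $\theta_i$ is the same, up to homeomorphism, as attaching the individual cones $CZ_i$ along the $\theta_i$: one uses that $C(\bigvee_i Z_i)=\bigvee_i CZ_i$ (the cone functor commutes with wedges since it is a based construction and the wedge is a colimit), together with the fact that the pushout defining the mapping cone is computed summandwise over the wedge because all the $Z_i$ share the single basepoint cone point. Concretely, $T_2\Sigma(\underline{A})\cup_\theta C(\bigvee_i Z_i)=T_2\Sigma(\underline{A})\cup_{\bigsqcup\theta_i}\bigvee_i CZ_i$, which is precisely the space in (iii) with $Z_i=\Sigma^{r-2}\Lambda^{(i)}(\underline{A})$ and $\theta_i=\Psi_1^{(i)}\omega_r^{(i)}$. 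This step is essentially formal and requires only care that the basepoint identifications match.

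The main obstacle is not any of these formal manipulations but rather making sure the specialization of Porter's general statement is faithful: in \cite[Theorem~(2.6)]{porter} the result is phrased for the whole tower, and one must check that the $s=1$ slice really produces the wedge of cones on $\Sigma^{r-2}\Lambda^{(i)}(\underline{A})$ with attaching maps through $T_2\Sigma(\underline{A})$, rather than, say, maps landing in some intermediate stratum, and that the indexing by omitted coordinates $i$ (as opposed to by pairs, triples, etc.) is the correct one for passing from $T_1$ to $T_2$. Once the dictionary between Porter's indexing and the notation of Remark~\ref{rem.inclusions} is pinned down—so that $\Psi_1^{(i)}:T_1^{(i)}\Sigma(\underline{A})\hookrightarrow T_2\Sigma(\underline{A})$ is the embedding that reinserts the $i$-th basepoint coordinate and $\omega_r^{(i)}$ is the generalized Whitehead map \eqref{eq.whitehead.map.generalized} for the $(r-1)$-tuple $\underline{A}^{(i)}$—the theorem follows immediately by quoting \cite[Theorem~(2.6)]{porter} for the pair of consecutive strata $T_2\subseteq T_1$ and then applying the cone-of-wedge identity to pass from (ii) to (iii).
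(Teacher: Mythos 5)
Your proposal is correct and matches the paper's approach: the paper gives no proof at all, simply asserting the statement as a particular case of \cite[Theorem~(2.6)]{porter}, and your argument is exactly the careful verification of that specialization (the $s=1$ stratum with omitted coordinates $i=1,\dots,r$) together with the formal identification $C\bigl(\bigvee_i Z_i\bigr)\cong\bigvee_i CZ_i$ relating (ii) and (iii).
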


Now, we discuss some results on the $r$\textsuperscript{th} order generalized Whitehead product $[f_1,\dots ,f_r]$ of maps $f_i:\Sigma A_i\to X$
for $i=1,\dots ,r$. First, we recall \cite[Theorems~(2.1)(d), (2.4), (2.5) and Remark~(4)]{porter} and present a corollary of \cite[Theorem~(2.3)]{porter}.

\begin{theorem}
\begin{enumerate}[label=\textup{(\roman*)}]\thmenumhspace
\item\label{teo.porter.2.1} \textup{(\textit{Naturality})} Let $f_i:\Sigma A_i\to \Sigma B_i$, $g_i:\Sigma B_i\to X$ and $k:X\to Y$ be maps for $i=1,\dots , r$. Then,
$k_*[g_1,\dots ,g_r]\subseteq [k g_1,\dots ,k g_r]$.
\item\label{teo.porter.2.5} If $X$ is an $H$-space then $\omega_r(f)=0$.
\item\label{lemma.zerobelongs} Let $f_i:\Sigma A_i\to X$ be maps for $i=1,\dots ,r$. Then $0\in [f_1,\dots ,f_r]$ if and only if
$f_1\vee\cdots\vee f_r:T_{r-1}\Sigma(\underline{A})\to X$ has an extension to $T_0\Sigma(\underline{A})$.
Further, $\Sigma\omega_r(f)=0$ for $f:T_1\Sigma(\underline{A}) \to X$ extending $f_1\vee\dots\vee f_r$.
\end{enumerate}\label{teo.porter.glue}
\end{theorem}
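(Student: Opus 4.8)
The strategy is to read naturality and the $H$-space vanishing off from Porter, and to give short arguments for the remaining two assertions. Naturality is immediate from the definition: if $g:T_1\Sigma(\underline{B})\to X$ is an extension of $g_1\vee\cdots\vee g_r$, then $kg:T_1\Sigma(\underline{B})\to Y$ is an extension of $kg_1\vee\cdots\vee kg_r$ and $\omega_r(kg)=kg\circ\omega_r=k_*\omega_r(g)$; letting $\omega_r(g)$ range over $[g_1,\dots,g_r]$ yields $k_*[g_1,\dots,g_r]\subseteq[kg_1,\dots,kg_r]$. The $H$-space statement I would simply quote from \cite{porter}; it is the higher analogue of the classical fact that Whitehead products vanish in $H$-spaces, and it can be obtained by induction through Porter's decomposition (Theorem~\ref{teo.porter.2.6}).

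For the equivalence between $0\in[f_1,\dots,f_r]$ and the extendability of $f_1\vee\cdots\vee f_r$ over $T_0\Sigma(\underline{A})$, I would use \eqref{equ}, which presents $T_0\Sigma(\underline{A})$ as the mapping cone of $\omega_r$; writing $j:T_1\Sigma(\underline{A})\hookrightarrow T_0\Sigma(\underline{A})$ for the inclusion, this gives $j\omega_r=0$. If $F:T_0\Sigma(\underline{A})\to X$ extends $f_1\vee\cdots\vee f_r$, then $f:=F\circ j$ extends the wedge and $\omega_r(f)=F\circ(j\omega_r)=0$, so $0\in[f_1,\dots,f_r]$. Conversely, if $\omega_r(f)=0$ for some extension $f$ of $f_1\vee\cdots\vee f_r$, then $f$ extends over the cone $T_1\Sigma(\underline{A})\cup_{\omega_r}C(\Sigma^{r-1}\Lambda(\underline{A}))\simeq T_0\Sigma(\underline{A})$; since the restriction of this extension to $T_{r-1}\Sigma(\underline{A})$ is $f_1\vee\cdots\vee f_r$, the latter extends over $T_0\Sigma(\underline{A})$.

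The main point is the corollary $\Sigma\omega_r(f)=0$, which I would again deduce from \eqref{equ}. Since $j\omega_r=0$, we get $\Sigma j\circ\Sigma\omega_r=\Sigma(j\omega_r)=0$. On the other hand $T_0\Sigma(\underline{A})=\Sigma A_1\times\cdots\times\Sigma A_r$, so the natural splitting of the suspension of a finite product into a wedge of suspensions of iterated smash products applies to both $\Sigma T_0\Sigma(\underline{A})$ and $\Sigma T_1\Sigma(\underline{A})$, exhibiting $\Sigma j$ as (homotopic to) the inclusion of a wedge summand; in particular $\Sigma j$ has a left homotopy inverse. It follows that $\Sigma\omega_r=0$, and hence $\Sigma\omega_r(f)=\Sigma f\circ\Sigma\omega_r=0$ for every $f:T_1\Sigma(\underline{A})\to X$; note that the hypothesis that $f$ extend the wedge is not actually needed here. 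The step I expect to be the main obstacle to write out carefully is precisely this last one, namely checking that the inclusion of the fat wedge into the product is compatible with the James splitting, so that $\Sigma j$ is genuinely a split monomorphism; this is classical and should be pinned down with an explicit reference, and in the case at hand it can alternatively be bootstrapped from Theorem~\ref{teo.porter.2.6} by induction on $r$, peeling off one cone $C(\Sigma^{r-2}\Lambda^{(i)}(\underline{A}))$ at a time.
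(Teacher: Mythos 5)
Your arguments are correct, but note that the paper offers no proof of this theorem at all: it is stated purely as a recollection of Porter's Theorems (2.1)(d), (2.4), (2.5) and Remark (4), so the only comparison to be made is with Porter's original arguments, which yours essentially reproduce. Part (i) is indeed immediate from the definition exactly as you say. For part (iii), your use of \eqref{equ} is the right mechanism, but two points deserve to be made explicit. First, in the converse direction you pass from an extension of $f$ over the mapping cone $T_1\Sigma(\underline{A})\cup_{\omega_r}C(\Sigma^{r-1}\Lambda(\underline{A}))$ to an extension of $f_1\vee\cdots\vee f_r$ over $T_0\Sigma(\underline{A})$; this requires that the equivalence \eqref{equ} be a map under $T_1\Sigma(\underline{A})$ (it is, by Porter's construction) together with the standard Dold/cofibration argument to transport the extension across the equivalence relative to $T_1\Sigma(\underline{A})$. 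Second, your reduction of $\Sigma\omega_r(f)=0$ to the single statement that $\Sigma j$ is a split monomorphism is clean, and you are right both that this makes the hypothesis ``$f$ extends the wedge'' superfluous and that the splitting is the genuine content: $\Sigma j$ split mono and $\Sigma j\circ\Sigma\omega_r=\Sigma(j\omega_r)=0$ are together equivalent to $\Sigma\omega_r=0$, so one must establish the splitting independently, e.g.\ via the James--Milnor decomposition of $\Sigma(\Sigma A_1\times\cdots\times\Sigma A_r)$ into a wedge of suspended smash products (compatible with the fat wedge, as all spaces involved are simply connected suspensions, so a homology argument suffices) or by the inductive peeling you suggest from Theorem~\ref{teo.porter.2.6}. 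With those two points pinned down, your write-up is a complete and faithful substitute for the citation.
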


\begin{corollary}\label{cor.fi.zero}
If $[f_1,\dots ,f_r]\neq\emptyset$ and $f_{i_0}=0_{i_0}$ for some $1\leq i_0\leq r$ then $0\in [f_1,\dots ,f_r]$.
\end{corollary}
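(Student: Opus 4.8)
The plan is to deduce Corollary~\ref{cor.fi.zero} directly from Theorem~\ref{teo.porter.glue}\ref{lemma.zerobelongs}. By hypothesis $[f_1,\dots,f_r]\neq\emptyset$, so there exists $f:T_1\Sigma(\underline{A})\to X$ extending $f_1\vee\cdots\vee f_r$. The criterion \ref{lemma.zerobelongs} says $0\in[f_1,\dots,f_r]$ precisely when $f_1\vee\cdots\vee f_r$ extends over $T_0\Sigma(\underline{A})$; so it suffices to produce such an extension under the extra assumption that one of the maps, say $f_{i_0}$, is null-homotopic.

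First I would reduce to the case $i_0=r$ by permuting coordinates: Proposition~\ref{prop.permutation} (or just the evident symmetry of the construction) shows that being extendable over $T_0$ is invariant under permutation of the factors $A_i$, since a permutation $\sigma\in S_r$ induces a homeomorphism $T_1\Sigma(\sigma\underline{A})\xrightarrow{\cong}T_1\Sigma(\underline{A})$ compatible with the inclusions into $T_0$. So assume $f_r=0_r:\Sigma A_r\to X$.

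Next I would build the extension over $T_0\Sigma(\underline{A})=\Sigma A_1\times\cdots\times\Sigma A_r$ by hand. Consider the projection $p:\Sigma A_1\times\cdots\times\Sigma A_r\to \Sigma A_1\times\cdots\times\Sigma A_{r-1}=T_0\Sigma(\underline{A}')$ onto the first $r-1$ factors, where $\underline{A}'=(A_1,\dots,A_{r-1})$. Since $[f_1,\dots,f_r]\neq\emptyset$, Porter's non-emptiness criterion (\cite[Theorem~(2.7)]{porter}, quoted in the excerpt) forces every lower product, in particular $[f_1,\dots,f_{r-1}]$, to contain $0$; hence by \ref{lemma.zerobelongs} applied to the $(r-1)$-tuple, the map $f_1\vee\cdots\vee f_{r-1}:T_{r-2}\Sigma(\underline{A}')\to X$ extends to a map $F:T_0\Sigma(\underline{A}')\to X$. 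Then $F\circ p:T_0\Sigma(\underline{A})\to X$ is the desired extension: restricted to $T_{r-1}\Sigma(\underline{A})=\Sigma A_1\vee\cdots\vee\Sigma A_r$ it agrees with $f_1\vee\cdots\vee f_{r-1}\vee 0_r = f_1\vee\cdots\vee f_r$, because $F\circ p$ sends the $\Sigma A_r$-summand to the basepoint (as $p$ collapses it) while on each $\Sigma A_i$ with $i<r$ it equals $F|_{\Sigma A_i}=f_i$. Applying \ref{lemma.zerobelongs} once more, now to the original $r$-tuple, yields $0\in[f_1,\dots,f_r]$.

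The only subtle point — and the one I would state carefully rather than gloss over — is the use of Porter's non-emptiness criterion to guarantee $0\in[f_1,\dots,f_{r-1}]$, which is exactly what licenses the inductive extension $F$; everything else is formal. Alternatively, one can avoid invoking lower products altogether by noting that $f$ itself already extends $f_1\vee\cdots\vee f_{r-1}$ over $T_1\Sigma(\underline{A}')$ (restrict $f$ appropriately), and then one still needs $[f_1,\dots,f_{r-1}]\ni 0$ to push it over $T_0$; so the non-emptiness criterion seems unavoidable, and I would simply cite it. Thus the proof is short: permute so that $f_{i_0}=f_r=0_r$, use non-emptiness to get an extension $F$ of $f_1\vee\cdots\vee f_{r-1}$ over the product of the first $r-1$ factors, precompose with the projection, and conclude by \ref{lemma.zerobelongs}.
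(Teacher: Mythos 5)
Your argument is correct, but it takes a detour the paper does not need. Both proofs reduce to $f_{i_0}=0$ being in a fixed slot by Proposition~\ref{prop.permutation} and then produce an extension of the wedge map over $T_0\Sigma(\underline{A})$ by precomposing with the projection that kills the $i_0$\textsuperscript{th} coordinate; the difference is where the extension over the product of the remaining $r-1$ factors comes from. You obtain it by invoking Porter's non-emptiness criterion (his Theorem~(2.7)) to get $0\in[f_1,\dots,f_{r-1}]$ and then applying Theorem~\ref{teo.porter.glue}\ref{lemma.zerobelongs} to the $(r-1)$-tuple. The paper gets it for free: if $F_1:T_1\Sigma(\underline{A})\to X$ is any extension of $f_1\vee\cdots\vee f_r$ (which exists since the product is non-empty), then the slice $\{(\ast,x_2,\dots,x_r)\}\cong \Sigma A_2\times\cdots\times\Sigma A_r$ lies \emph{entirely inside} $T_1\Sigma(\underline{A})$ (every point of it already has a basepoint coordinate), so $F(x_1,\dots,x_r)=F_1(\ast,x_2,\dots,x_r)$ is a well-defined extension of $0_1\vee f_2\vee\cdots\vee f_r$ over $T_0\Sigma(\underline{A})$, and one concludes by \ref{lemma.zerobelongs} alone. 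This also corrects your closing remark that the non-emptiness criterion ``seems unavoidable'': in your alternative you observe that $f$ restricts to an extension of $f_1\vee\cdots\vee f_{r-1}$ over $T_1\Sigma(\underline{A}')$ and think you still must push it to $T_0\Sigma(\underline{A}')$, but in fact the restriction of $f$ to the slice with last coordinate $\ast$ is already defined on all of $T_0\Sigma(\underline{A}')$, since that slice is contained in $T_1\Sigma(\underline{A})$. Your route is still valid (and for $r=2$ both degenerate harmlessly), but the paper's is more elementary and does not lean on Porter's Theorem~(2.7).
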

\begin{proof}In virtue of Proposition~\ref{prop.permutation}, we can suppose that $i_0=1$. By hypothesis the map
$0\vee f_2\vee\cdots\vee f_r: T_{r-1}\Sigma (\underline{A}) \to X$ has an extension $F_1:T_1\Sigma(\underline{A})\to X$.
Define $F: T_0\Sigma(\underline{A}) \to X$ by $F(x_1,\dots ,x_r)= F_1(\ast,x_2,\dots ,x_r)$ for $(x_1,\dots,x_r)\in T_0\Sigma(\underline{A})$.
Then, $F$ is an extension of $0_1\vee f_2\vee\cdots\vee f_r$, and the result follows by Theorem~\ref{teo.porter.glue}\ref{lemma.zerobelongs}.
\end{proof}

Recall from \cite{Si} that a space $X$ is called a  \textit{$G$-space} if for any map $f : \mathbb{S}^n\to X$ and $n\ge 1$ the map
$\nabla (\iota_X \vee f): X\vee \mathbb{S}^n \to X$ extends to $F: X \times \mathbb{S}^n \to X$.
According to \cite[Theorem~3.2]{Si}, any $r$\textsuperscript{th} order spherical Whitehead product of a  $G$-space contains zero.

Now, let $\mathcal{G}(A, X)\subseteq \pi(A,X)$ be the set of (homotopy classes of) maps $f:A \to X$ such that the maps $\nabla (\iota_X \vee f): X\vee A \to X$
extend to $F: X \times A \to X$. A space $X$ is called a \textit{$G$-space with respect to the space $A$} if $\mathcal{G}(A, X)=\pi(A,X)$.

\begin{lemma}\label{lemma.Gspace}
{If $X$ is a $G$-space with respect to the spaces $ A_1,\dots , A_r$ then any map $\iota_X\vee f_1\vee\cdots\vee f_r : X\vee A_1\vee\cdots\vee A_r\to X$
extends to $F: X\times A_1\times\cdots\times A_r\to X$.}
\end{lemma}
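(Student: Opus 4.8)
The plan is to prove this by induction on $r$, building up the extension one factor at a time. First I would observe that the case $r=1$ is immediate: by hypothesis $X$ is a $G$-space with respect to $A_1$, so $\iota_X\vee f_1:X\vee A_1\to X$ extends to $X\times A_1\to X$ by the very definition of $\mathcal{G}(A_1,X)=\pi(A_1,X)$. For the inductive step, I would assume the statement holds for $r-1$ factors, so that $\iota_X\vee f_1\vee\cdots\vee f_{r-1}:X\vee A_1\vee\cdots\vee A_{r-1}\to X$ extends to some $F':X\times A_1\times\cdots\times A_{r-1}\to X$.

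The key idea is to treat the product $Y=X\times A_1\times\cdots\times A_{r-1}$ as a new ``ambient'' space and apply the $G$-property with respect to $A_r$. Set $g_r=f_r\circ p:A_r\to X$ composed appropriately, or rather I would work directly with $F'$ as a map into $X$: since $X$ is a $G$-space with respect to $A_r$, the map $\nabla(\iota_X\vee f_r):X\vee A_r\to X$ extends to some $G:X\times A_r\to X$. Now define $F:(X\times A_1\times\cdots\times A_{r-1})\times A_r\to X$ by $F(y,a_r)=G(F'(y),a_r)$ for $y\in X\times A_1\times\cdots\times A_{r-1}$ and $a_r\in A_r$. Composing the product as $X\times A_1\times\cdots\times A_r=(X\times A_1\times\cdots\times A_{r-1})\times A_r$, this $F$ is defined on the full product.

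It remains to check that $F$ restricts correctly on the wedge $X\vee A_1\vee\cdots\vee A_r$. On the sub-wedge $X\vee A_1\vee\cdots\vee A_{r-1}$ (where the $A_r$-coordinate is the basepoint), we have $F(y,\ast)=G(F'(y),\ast)=F'(y)$ since $G$ restricts to the identity on $X\times\ast$; and by the inductive hypothesis $F'$ restricts to $\iota_X\vee f_1\vee\cdots\vee f_{r-1}$ there, so $F$ agrees with $\iota_X\vee f_1\vee\cdots\vee f_{r-1}$ on that part. On the remaining wedge summand $A_r$ (where all the $X,A_1,\dots,A_{r-1}$ coordinates are basepoints), $F'(\ast)=\ast$ since $F'$ extends a based map, so $F(\ast,a_r)=G(\ast,a_r)=f_r(a_r)$ because $G$ extends $\nabla(\iota_X\vee f_r)$. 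Hence $F$ restricts to $\iota_X\vee f_1\vee\cdots\vee f_r$ as required.

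The main obstacle is really just bookkeeping with basepoints: one must be careful that the extensions $F'$ and $G$ obtained from the $G$-space property genuinely restrict to the identity on the $X$-factor (which they do, since $\nabla(\iota_X\vee f)$ restricted to $X$ is $\iota_X$), and that reassociating the iterated product is harmless up to the canonical homeomorphism. There is nothing deep here beyond the definition of $\mathcal{G}(A,X)$ and a clean induction; the statement is essentially a formal consequence, and the proof is short once the inductive scaffolding is set up correctly.
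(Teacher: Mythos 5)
Your proof is correct and follows essentially the same route as the paper: an induction on $r$ whose inductive step defines the extension as the composite $G\circ(F'\times\iota_{A_r})$, exactly the paper's $F_r(F\times\iota_{A_r})$. The only differences are cosmetic (you start the induction at $r=1$ rather than $r=2$ and spell out the basepoint checks the paper leaves implicit).
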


\begin{proof} The proof is by induction on $r$. Let $r=2$ and  $F_i:X\times  A_i\to X$ extending $\iota_X\vee f_i:X\vee  A_i\to X$ for $i=1,2$. Then $F_2(F_1\times \iota_{ A_2}):X\times  A_1\times  A_2\to X$ extends $\iota_X\vee f_1\vee f_2: X\vee  A_1\vee  A_2 \to X$.

Suppose that the statement is true for ${r-1}$, that is, there exists $F:X\times  A_1\times \cdots \times  A_{r-1}\to X$ extending $\iota_X\vee f_1\vee\cdots\vee f_{r-1}$.

Finally, let $F_r:X\times  A_r\to X$ extending $\iota_X\vee f_r:X\vee  A_r\to X$ and then the composite $F_r (F\times \iota_{ A_r}): X\times T_0(\underline{A})\to X$ extends $\iota_X\vee f_1\vee\cdots\vee f_r$, which completes the proof.
\end{proof}

Referring to Williams~\cite{williams}, we say that a space $X$ has {property $P_r$} if for every $f_i : \Sigma A_i\to X$ with $i=1,\dots ,r$, we have
$0\in[f_1,\dots ,f_r]$. Certainly, in view of Theorem~\ref{teo.porter.glue}\ref{teo.porter.2.5}, any $H$-space has not only property $P_r$ for all
$r\ge 2$ but $0$ is the only element of $[f_1,\dots ,f_r]$. (Williams \cite{williams}: \textit{We note at this point that it is unresolved conjecture as to whether $X$ has property $P_r$ implies that $0$ is the {only} element of $[f_1,\dots ,f_r]$.})

Directly from Theorem~\ref{teo.porter.glue}\ref{lemma.zerobelongs} and Lemma~\ref{lemma.Gspace} it follows:

\begin{corollary}
{ Every  $G$-space $X$ with respect to any $\Sigma A_1,\dots ,\Sigma A_r$ has pro\-perty $P_r$. In particular, If $f : \Sigma A\to X$ and $X$ is
a  $G$-space with respect to any $\Sigma A$ then $0\in [f,\overset{\times r}{\dotsc{}},f]$ for any $r\geq 2$.}
\end{corollary}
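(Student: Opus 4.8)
The plan is to deduce the corollary directly by combining the characterization of when $0$ lies in a higher Whitehead product (Theorem~\ref{teo.porter.glue}\ref{lemma.zerobelongs}) with the extension property established in Lemma~\ref{lemma.Gspace}. Let $X$ be a $G$-space with respect to $\Sigma A_1,\dots,\Sigma A_r$ and fix arbitrary maps $f_i:\Sigma A_i\to X$ for $i=1,\dots,r$. First I would observe that each $f_i$ lies in $\mathcal{G}(\Sigma A_i,X)=\pi(\Sigma A_i,X)$ by the defining hypothesis on $X$, so Lemma~\ref{lemma.Gspace} applies verbatim and yields a map $F:X\times\Sigma A_1\times\cdots\times\Sigma A_r\to X$ extending $\iota_X\vee f_1\vee\cdots\vee f_r$.

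The next step is to convert this extension of $\iota_X\vee f_1\vee\cdots\vee f_r$ into an extension of the wedge $f_1\vee\cdots\vee f_r:T_{r-1}\Sigma(\underline{A})\to X$ over the full product $T_0\Sigma(\underline{A})=\Sigma A_1\times\cdots\times\Sigma A_r$. This is done by restricting $F$ along the inclusion $\{\ast\}\times\Sigma A_1\times\cdots\times\Sigma A_r\hookrightarrow X\times\Sigma A_1\times\cdots\times\Sigma A_r$; the restriction is a map $T_0\Sigma(\underline{A})\to X$ whose restriction to $T_{r-1}\Sigma(\underline{A})$ agrees with $f_1\vee\cdots\vee f_r$, because $F$ restricted to $\{\ast\}\times(A_1\vee\cdots\vee A_r)$ is already $f_1\vee\cdots\vee f_r$. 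By Theorem~\ref{teo.porter.glue}\ref{lemma.zerobelongs} this forces $0\in[f_1,\dots,f_r]$, and since the $f_i$ were arbitrary, $X$ has property $P_r$.

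For the "in particular" clause, I would simply take $A_1=\cdots=A_r=A$ and $f_1=\cdots=f_r=f$ in the statement just proved: a $G$-space $X$ with respect to $\Sigma A$ is, in particular, a $G$-space with respect to the $r$-tuple $(\Sigma A,\dots,\Sigma A)$, so $0\in[f,\overset{\times r}{\dotsc{}},f]$ for every $r\ge 2$.

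There is essentially no obstacle here: the corollary is a formal consequence of the two cited results, and the only point requiring a moment's care is the passage from an extension of $\iota_X\vee f_1\vee\cdots\vee f_r$ over $X\times T_0(\Sigma\underline{A})$ to an extension of $f_1\vee\cdots\vee f_r$ over $T_0\Sigma(\underline{A})$ — but this is immediate by evaluating the first coordinate at the basepoint. Accordingly the proof is short; I would write it as the single observation that Lemma~\ref{lemma.Gspace} supplies the required extension and Theorem~\ref{teo.porter.glue}\ref{lemma.zerobelongs} converts it into the assertion $0\in[f_1,\dots,f_r]$, then note the special case.
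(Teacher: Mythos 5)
Your argument is exactly the one the paper intends: the corollary is stated as following \emph{directly} from Theorem~\ref{teo.porter.glue}\ref{lemma.zerobelongs} and Lemma~\ref{lemma.Gspace}, and your restriction of $F$ to $\{\ast\}\times\Sigma A_1\times\cdots\times\Sigma A_r$ is precisely the step that converts the $G$-space extension into an extension of $f_1\vee\cdots\vee f_r$ over $T_0\Sigma(\underline{A})$. The proof is correct (modulo the harmless typo writing $A_1\vee\cdots\vee A_r$ where $\Sigma A_1\vee\cdots\vee\Sigma A_r$ is meant).
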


In general, the generalized Whitehead product is not additive. But, in the sequel, we need  an addition operation defined in \cite{porter} for some particular functions.
We say that maps $f,g:T_1\Sigma(\underline{A})\to X$ are \textit{compatible off the $i$\textsuperscript{th} coordinate} if $f\Psi_0^{(i)}=
g\Psi_0^{(i)}:T_0^{(i)}\Sigma(\underline{A})\to X$. If $A_i=\Sigma A_i'$ is a suspension one defines an addition
$+^{(i)}$ by
\begin{multline*}
(f+^{(i)}g)(x_1,\dots ,[t_i,[u,a_i']],\dots ,x_r)\\
=\begin{cases}
f(x_1,\dots ,[t_i,[2u,a_i']],\dots ,x_r), & 0\leq u \leq \frac{1}{2}, \\[1ex]
g(x_1,\dots ,[t_i,[2u-1,a_i']],\dots ,x_r), & \frac{1}{2}\leq u \leq 1.
\end{cases}
\end{multline*}
Also, one defines ${-^{(i)}}f$ by \[(-^{(i)}f)(x_1,\dots ,[t_i,[u,a_i']],\dots ,x_r)=f(x_1,\dots ,[t_i,[1-u,a_i']],\dots ,x_r).\]

Then, in view of \cite[Theorem (2.13)]{porter}, we can state:

\begin{theorem}
\begin{enumerate}[label=\textup{(\roman*)}]\thmenumhspace
\item $\omega_r(f+^{(i)}g)=\omega_r(f)+\omega_r(g)$;

\item $\omega_r({-^{(i)}}f)=-\omega_r(f)$.
\end{enumerate}
\end{theorem}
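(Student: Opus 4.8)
The plan is to derive both identities from the naturality of Porter's Whitehead map $\omega_r$ in its arguments $A_1,\dots,A_r$ (as established in \cite[Theorem~(2.1)]{porter}), by feeding into $\omega_r$ two canonical self-maps of the $i$\textsuperscript{th} coordinate. Throughout I assume, as the definitions of $+^{(i)}$ and $-^{(i)}$ demand, that $A_i=\Sigma A_i'$, and for (i) also that $f$ and $g$ are compatible off the $i$\textsuperscript{th} coordinate, so that $f+^{(i)}g$ is defined. Since $A_i=\Sigma A_i'$, we have $\Lambda(\underline{A})=\Sigma(A_1\wedge\cdots\wedge A_i'\wedge\cdots\wedge A_r)$, so $\Sigma^{r-1}\Lambda(\underline{A})$ is a suspension; it is this suspension coordinate that I use to form $\omega_r(f)+\omega_r(g)$ and $-\omega_r(f)$ in $\pi(\Sigma^{r-1}\Lambda(\underline{A}),X)$ (any other choice gives the same group).

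For (i), I would apply naturality of $\omega_r$ to the pinch map $p\colon A_i=\Sigma A_i'\to A_i\vee A_i$. The point is that $\Lambda$ turns a wedge in one slot into a wedge and $T_1\Sigma(-)$ turns it into a pushout along $T_0^{(i)}\Sigma(\underline{A})$, giving identifications $\Sigma^{r-1}\Lambda(A_1,\dots,A_i\vee A_i,\dots,A_r)\cong\Sigma^{r-1}\Lambda(\underline{A})\vee\Sigma^{r-1}\Lambda(\underline{A})$ and $T_1\Sigma(A_1,\dots,\Sigma A_i\vee\Sigma A_i,\dots,\Sigma A_r)\cong T_1\Sigma(\underline{A})\cup_{T_0^{(i)}\Sigma(\underline{A})}T_1\Sigma(\underline{A})$. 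Naturality of $\omega_r$ along the two wedge inclusions $A_i\hookrightarrow A_i\vee A_i$ shows that, under these identifications, the Whitehead map of the modified tuple is $\omega_r$ on each wedge summand composed with the structural inclusions into the pushout, and naturality along $p$ identifies the maps $p$ induces on source and target with, respectively, the co-$H$ comultiplication of $\Sigma^{r-1}\Lambda(\underline{A})$ and the ``pinch in the $i$\textsuperscript{th} coordinate'' implicit in the definition of $+^{(i)}$. Assembling these yields a homotopy-commutative square; composing it with the map $T_1\Sigma(\underline{A})\cup_{T_0^{(i)}\Sigma(\underline{A})}T_1\Sigma(\underline{A})\to X$ determined by $f$ and $g$ (legitimate by the compatibility hypothesis) rewrites its left edge as $\omega_r(f+^{(i)}g)$ and its right edge as $\nabla\bigl(\omega_r(f)\vee\omega_r(g)\bigr)\circ(\text{comultiplication})=\omega_r(f)+\omega_r(g)$, which is (i).

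For (ii), I would repeat the argument with $p$ replaced by the self-homeomorphism $\bar{\iota}\colon A_i=\Sigma A_i'\to\Sigma A_i'=A_i$, $[u,a]\mapsto[1-u,a]$, which represents $-\iota_{\Sigma A_i'}$. Naturality of $\omega_r$ along $(\iota_{A_1},\dots,\bar{\iota},\dots,\iota_{A_r})$ gives a commutative square whose vertical map on $T_1\Sigma(\underline{A})$ is exactly the coordinate reversal used to define $-^{(i)}f$ (so $-^{(i)}f=f\circ(\text{reversal})$) and whose vertical map on $\Sigma^{r-1}\Lambda(\underline{A})$ represents $-\iota_{\Sigma^{r-1}\Lambda(\underline{A})}$; composing with $f$ gives $\omega_r(-^{(i)}f)=(f\circ\omega_r)\circ\bigl(-\iota_{\Sigma^{r-1}\Lambda(\underline{A})}\bigr)=-\omega_r(f)$. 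Alternatively, (ii) follows from (i): $f+^{(i)}(-^{(i)}f)$ is homotopic to the map constant in the $i$\textsuperscript{th} coordinate, whose $\omega_r$-image vanishes by \cite[Theorem~(2.3)]{porter}.

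The only delicate point, and the main obstacle, is the bookkeeping in (i): one must check precisely how the functors $\Lambda$, $\Sigma^{r-1}\Lambda$, $T_0^{(i)}\Sigma(-)$ and $T_1\Sigma(-)$ behave on a one-slot wedge, and verify that the maps induced by $p$ and by $\bar{\iota}$ are genuinely the co-$H$ comultiplication and inverse on $\Sigma^{r-1}\Lambda(\underline{A})$ and the operations built into Porter's definitions of $+^{(i)}$ and $-^{(i)}$. After these identifications the rest is a diagram chase relying only on \cite[Theorems~(2.1) and~(2.3)]{porter}; this is, in essence, the argument behind \cite[Theorem~(2.13)]{porter}.
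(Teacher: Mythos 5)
Your proof is correct, but there is essentially nothing in the paper to compare it against: the authors give no argument for this theorem, which is stated purely as a restatement of Porter's Theorem~(2.13) (``in view of \cite[Theorem (2.13)]{porter}, we can state\dots''). What you have produced is, as you note yourself, a reconstruction of the proof of that cited result, and it is sound. The two load-bearing identifications check out: smash products and reduced suspensions distribute over wedges, so the pinch $A_i=\Sigma A_i'\to A_i\vee A_i$ in the $i$\textsuperscript{th} slot induces the comultiplication on $\Sigma^{r-1}\Lambda(\underline{A})\vee\Sigma^{r-1}\Lambda(\underline{A})$, while the fat wedge of the modified tuple is literally the union of two copies of $T_1\Sigma(\underline{A})$ meeting in $T_0^{(i)}\Sigma(\underline{A})$ --- which is exactly where the compatibility hypothesis $f\Psi_0^{(i)}=g\Psi_0^{(i)}$ is needed to glue $f$ and $g$ into a single map on that union. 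The naturality you invoke is naturality of $\omega_r$ in the spaces $A_1,\dots,A_r$ (Porter's Theorem~(2.1)), the same naturality the paper itself uses in the proof of Proposition~\ref{prop.permutation}, so it is available; note it holds only up to homotopy, which suffices since a map out of a wedge of well-pointed spaces is determined up to homotopy by its restrictions. Your fallback derivation of (ii) from (i) also works: $f+^{(i)}(-^{(i)}f)$ is homotopic to $f$ precomposed with the self-map of $T_1\Sigma(\underline{A})$ collapsing the $i$\textsuperscript{th} coordinate, that self-map factors through the inclusion $k\colon T_1\Sigma(\underline{A})\hookrightarrow T_0\Sigma(\underline{A})$, and $k\circ\omega_r$ is nullhomotopic by the homotopy equivalence~\eqref{equ}.
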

From this, it follows:

\begin{corollary}\label{c1} { If $f_i : \Sigma A_i\to X$ for $i=1,\dots ,r$ then \[[f_1,\dots ,nf_i,\dots ,f_r]\subseteq n[f_1,\dots ,f_i,\dots ,f_r]\]
for any integer $n$ and $i=1,\dots ,r$.}
\end{corollary}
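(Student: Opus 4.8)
\textbf{Proof proposal for Corollary~\ref{c1}.}

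The plan is to deduce the inclusion $[f_1,\dots,nf_i,\dots,f_r]\subseteq n[f_1,\dots,f_i,\dots,f_r]$ from the additivity theorem preceding it, reducing first to the case $i=1$ and then handling positive $n$, negative $n$, and $n=0$ separately. By Proposition~\ref{prop.permutation} (which identifies $[f_{\sigma(1)},\dots,f_{\sigma(r)}]$ with a sign times $(\Sigma^{r-1}\sigma)^*[f_1,\dots,f_r]$ for any permutation $\sigma$), it suffices to treat the coordinate $i=1$; applying the transposition swapping $1$ and $i$ and then applying it back carries the inclusion for $i=1$ to the inclusion for general $i$, since $(\Sigma^{r-1}\sigma)^*$ is an additive bijection and multiplication by $n$ commutes with it. So assume $A_1=\Sigma A_1'$ is a suspension (which is the standing hypothesis throughout this section, all the $A_j$ being suspensions) and write $+^{(1)}$, $-^{(1)}$ for the operations on maps $T_1\Sigma(\underline{A})\to X$ that are compatible off the first coordinate.

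First I would observe that if $f:T_1\Sigma(\underline{A})\to X$ extends $f_1\vee\cdots\vee f_r$, then the $n$-fold sum $f+^{(1)}\cdots+^{(1)}f$ ($n$ copies, $n\ge 1$) is a well-defined map $T_1\Sigma(\underline{A})\to X$ — any two copies of $f$ are trivially compatible off the first coordinate — and it extends the wedge map $(nf_1)\vee f_2\vee\cdots\vee f_r$, because on the subcomplex $T_{r-1}\Sigma(\underline{A})$ the operation $+^{(1)}$ restricts, on the $\Sigma A_1$ summand, to the loop-addition that represents $nf_1$, and is constant reshuffling on the other summands. By the displayed theorem, $\omega_r(f+^{(1)}\cdots+^{(1)}f)=n\,\omega_r(f)$. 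Since every extension $g:T_1\Sigma(\underline{A})\to X$ of $(nf_1)\vee f_2\vee\cdots\vee f_r$ appearing in $[nf_1,f_2,\dots,f_r]$ contributes $\omega_r(g)$, and such extensions need not all arise as $n$-fold $+^{(1)}$-sums, the point is the reverse: I must show that any element of $[nf_1,f_2,\dots,f_r]$ is $n$ times an element of $[f_1,\dots,f_r]$. Here I would use the structure of the proof of \cite[Theorem~(2.13)]{porter} together with the fact that the obstruction-theoretic description of the set of extensions of a wedge map over $T_1\Sigma(\underline{A})$ is a coset: the difference of any two extensions of the \emph{same} wedge map is measured by $d(-,-)$-type separation data landing in a group, so that the set $\{\omega_r(g)\}$ over all extensions $g$ of $(nf_1)\vee f_2\vee\cdots\vee f_r$ is (a translate by a fixed class of) the image under ``$n$-th power on the first coordinate'' of the corresponding set for $f_1\vee\cdots\vee f_r$. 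Concretely, given $g$ extending $(nf_1)\vee f_2\vee\cdots\vee f_r$, one checks that $-^{(1)}$ applied $n-1$ times to $g$ after subtracting off $n-1$ copies of a chosen $f$ produces a map extending $f_1\vee\cdots\vee f_r$, i.e.\ an element of $[f_1,\dots,f_r]$, whose image under the additivity isomorphism recovers $\omega_r(g)$ as $n$ times that element. For $n<0$ one combines this with $\omega_r(-^{(1)}f)=-\omega_r(f)$, and $n=0$ is immediate from Corollary~\ref{cor.fi.zero} (if $[f_1,\dots,f_r]\ne\emptyset$ then $0f_1=0_1$ forces $0\in[0_1,f_2,\dots,f_r]=0\cdot[f_1,\dots,f_r]$), with the empty-set case being vacuous.

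The main obstacle I anticipate is the passage from the \emph{additivity of $\omega_r$ on the distinguished $n$-fold sums} to the \emph{set-level inclusion} of Whitehead products: one must verify that every extension $g$ of the scaled wedge $(nf_1)\vee f_2\vee\cdots\vee f_r$ can be written, up to the equivalence that does not change $\omega_r$, in the form $f+^{(1)}\cdots+^{(1)}f$ for some extension $f$ of the unscaled wedge. This is the step where the coset structure of the extension set — implicit in \cite{porter} and parallel to the separation-element bookkeeping of Proposition~\ref{prop.separation} — does the real work, and it is the only place where one needs more than a formal manipulation of the defining formulas for $+^{(i)}$ and $-^{(i)}$.
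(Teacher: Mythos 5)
Your proposal correctly isolates the crux: iterating $\omega_r(f+^{(1)}g)=\omega_r(f)+\omega_r(g)$ on the $n$-fold sum $f+^{(1)}\cdots+^{(1)}f$ only yields the \emph{reverse} inclusion $n[f_1,\dots,f_r]\subseteq[nf_1,f_2,\dots,f_r]$, and the stated inclusion requires writing an arbitrary $\omega_r(g)$, for $g$ an extension of $nf_1\vee f_2\vee\cdots\vee f_r$, as $n$ times a \emph{single} element of $[f_1,\dots,f_r]$. But your concrete device does not achieve this. Subtracting $n-1$ copies of a chosen extension $f$ of $f_1\vee\cdots\vee f_r$ from $g$ produces an extension $h$ of $f_1\vee\cdots\vee f_r$ with $\omega_r(g)=\omega_r(h)+(n-1)\omega_r(f)$; this is not $n\omega_r(h)$ unless $\omega_r(f)=\omega_r(h)$, which fails for distinct extensions. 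Writing $[f_1,\dots,f_r]=\alpha+\Gamma$ with $\Gamma$ the indeterminacy subgroup of Corollary~\ref{CC}\ref{CC.1}, your manoeuvre places $\omega_r(g)$ in $[f_1,\dots,f_r]+(n-1)[f_1,\dots,f_r]=n\alpha+\Gamma$, whereas $n[f_1,\dots,f_r]=n\alpha+n\Gamma$; since $\Gamma\not\subseteq n\Gamma$ in general, you have proved a genuinely different containment. (Your argument also tacitly assumes $[f_1,\dots,f_r]\neq\emptyset$, which can fail while $[nf_1,f_2,\dots,f_r]\neq\emptyset$, e.g.\ when $nf_1=0$ kills the lower obstructions.)

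This gap cannot be repaired, because the corollary is false as literally written: for $n=0$ it asserts $[0,f_2,\dots,f_r]\subseteq\{0\}$, yet $[0,\eta_4^2,2\iota_4]$ equals the order-fifteen subgroup $[\pi_{11}(\mathbb{S}^4),2\iota_4]$ by the paper's own Proposition~\ref{prop.order.15}. Your parenthetical ``$[0_1,f_2,\dots,f_r]=0\cdot[f_1,\dots,f_r]$'' is exactly where this surfaces: Corollary~\ref{cor.fi.zero} gives only $0\in[0_1,f_2,\dots,f_r]$, not equality with $\{0\}$. The paper offers no proof beyond ``it follows,'' and the way the corollary is invoked in Proposition~\ref{prop.order.15} (where $2[\eta_4,\eta_4^2,2\iota_4]$ is written as $2\alpha+J$, with the full indeterminacy retained) indicates that $n[f_1,\dots,f_r]$ is meant as the coset $n\alpha+\Gamma$ rather than the pointwise multiple $n\alpha+n\Gamma$. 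Under that reading, and with the hypothesis $[f_1,\dots,f_r]\neq\emptyset$ added, your computation $\omega_r(g)=\omega_r(h)+(n-1)\omega_r(f)\in n\alpha+\Gamma$ does complete the argument; so your proof is essentially correct for the statement the authors actually use, but not for the statement as printed.
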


\section{Triple spherical Whitehead products}\label{sec.triple}
E.\ C.\ Zeeman \cite[Section~3.3]{massey} associated with a triple of elements $f\in \pi_p(X)$, $g\in \pi_q(X)$ and
$h\in \pi_r(X)$ ($p,q,r\geq 2$), whose Whitehead products taken in pairs all vanish, a certain subset $[f,g,h]'$
of $\pi_{p+q+r-1}(X)$. K.\ A.\ Hardie \cite{hardie} sharpened this construction defining the triple spherical Whitehead product $[f,g,h]\subseteq [f,g,h]'$
and proving in \cite[Theorem~0.1]{hardie} that $[f,g,h]$ is a coset of the subgroup \[J(f,g,h)=[\pi_{q+r}(X),f]+
[\pi_{p+r}(X),g]+[\pi_{p+q}(X),h]\] of $\pi_{p+q+r-1}(X)$.
Thus, a \textit{trivial} triple spherical Whitehead product means $[f,g,h]=J(f,g,h)$ or equivalently, $0\in[f,g,h]$.
Hardie stated in \cite[Section~5]{hardie}: \textit{We do not know of a case when the triple \textup{(\textit{spherical})} product $[f,g,h]$ is non-trivial for a sphere}, and
for $X=\mathbb{S}^4$, \textit{the triple product $[\eta_4,\eta_4^2,2\iota_4]\subseteq\pi_{14}(\mathbb{S}^4)$ is possibly non-trivial.}
Proposition~\ref{prop.order.15} below shows that it is non-empty and has order fifteen. In order to prove this,
we need to check first that all lower products vanish. We keep below the standard notations from Toda's book \cite{toda}.

\begin{lemma}\label{lemma.foo} { Given $\eta_4\in \pi_5(\mathbb{S}^4)$, $\eta_4^2\in \pi_6(\mathbb{S}^4)$, and $\iota_4\in\pi_4(\mathbb{S}^4)$, the following classical
Whitehead products vanish: \begin{align*} [\eta_4,2\iota_4]&=0, & [\eta_4^2,2\iota_4]&=0, & [\eta_4,\eta_4^2]&=0. \end{align*}}
\end{lemma}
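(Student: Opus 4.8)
The plan is to verify the three vanishing statements one at a time, using the standard facts about $\pi_*(\mathbb{S}^4)$ and the Whitehead product machinery from Toda's book \cite{toda}. The three products $[\eta_4,2\iota_4]$, $[\eta_4^2,2\iota_4]$, $[\eta_4,\eta_4^2]$ live in $\pi_8(\mathbb{S}^4)$, $\pi_9(\mathbb{S}^4)$, and $\pi_{10}(\mathbb{S}^4)$ respectively, and in each case I would reduce the product to a small known homotopy group where the relevant element is forced to be zero, or use bilinearity to pull out a factor that kills it.

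For $[\eta_4,2\iota_4]$: by bilinearity of the Whitehead product in each variable, $[\eta_4,2\iota_4]=2[\eta_4,\iota_4]$. Now $[\iota_4,\iota_4]=\nu'+\Sigma\nu'+\cdots$ (more precisely the Whitehead square is a known element, e.g.\ $[\iota_4,\iota_4]=\pm(2\nu_4-\Sigma\nu')$ by \cite[Chapter V]{toda}), and one composes with $\eta$ to land in $\pi_8(\mathbb{S}^4)$. Alternatively, and more cleanly, $[\eta_4,\iota_4]=[\iota_4,\iota_4]\circ(\eta_4\wedge\iota_4)$ up to sign, i.e.\ $[\eta_4,\iota_4]=[\iota_4,\iota_4]\circ\eta_7\in\pi_8(\mathbb{S}^4)$. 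Using $[\iota_4,\iota_4]=\pm 2\nu_4 \mp \Sigma\nu'$ and Toda's composition formulas, one computes $[\iota_4,\iota_4]\circ\eta_7$ and then multiplies by $2$; since $\eta_7$ has order $2$, the factor $2$ annihilates $2[\eta_4,\iota_4] = 2([\iota_4,\iota_4]\circ\eta_7)$ once one checks the relevant element of $\pi_8(\mathbb{S}^4)$ is in the $2$-torsion (which it is, as $\pi_8(\mathbb{S}^4)\cong\mathbb{Z}_2\oplus\mathbb{Z}_2$). So this case is essentially ``the factor $2$ kills everything'' after identifying the ambient group as $2$-torsion.

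For $[\eta_4^2,2\iota_4]=2[\eta_4^2,\iota_4]$ in $\pi_9(\mathbb{S}^4)$: the same reduction applies — $[\eta_4^2,\iota_4]=[\iota_4,\iota_4]\circ(\eta_4^2\wedge\iota_4)=\pm[\iota_4,\iota_4]\circ\eta_7\eta_8$, which lies in $\pi_9(\mathbb{S}^4)$. Since $\pi_9(\mathbb{S}^4)\cong\mathbb{Z}_2\oplus\mathbb{Z}_2$ is again $2$-torsion, multiplying by $2$ gives $0$. For $[\eta_4,\eta_4^2]$ in $\pi_{10}(\mathbb{S}^4)$: here I would use $[\eta_4,\eta_4^2]=[\iota_4,\iota_4]\circ(\eta_4\wedge\eta_4^2)=\pm[\iota_4,\iota_4]\circ(\eta_7\circ\eta_8\circ\eta_9)$. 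Now $\eta^3$ is relevant: $\eta_n^3=4\nu_n$ for suitable $n$ (Toda), but in the stable/unstable range here $\eta_7\eta_8\eta_9$ is a specific element; one checks that $[\iota_4,\iota_4]\circ\eta_7\eta_8\eta_9$ vanishes by composition identities in Toda, or alternatively uses the fact that $[\eta_4,\eta_4^2]$ is anti-commutative with itself up to sign and uses that $[\alpha,\beta]=\pm[\beta,\alpha]$ forces extra torsion constraints, combined with the structure of $\pi_{10}(\mathbb{S}^4)\cong\mathbb{Z}_{24}\oplus\mathbb{Z}_3$ where the image of such a composite is forced to be trivial.

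The main obstacle is the third product $[\eta_4,\eta_4^2]$: unlike the first two, there is no free factor of $2$ to exploit, so the vanishing must come from an actual composition-formula computation in Toda's tables (tracking $[\iota_4,\iota_4]$ and its composites with iterated $\eta$'s, remembering $[\iota_4,\iota_4]$ has a $\Sigma\nu'$ summand whose composition with $\eta^3$ needs the relations $\nu'\eta=\cdots$ from \cite[Chapter V, VI]{toda}), or alternatively from Jacobi-identity-type relations among Whitehead products. I would carry out the reduction of each $[f,g]$ to $[\iota_4,\iota_4]\circ(\text{composite of }\eta\text{'s})$ first, then invoke the explicit value of $[\iota_4,\iota_4]$ and Toda's composition methods for the two easy cases, and finally grind the one genuinely nontrivial computation for $[\eta_4,\eta_4^2]$ against Toda's tables for $\pi_{10}(\mathbb{S}^4)$.
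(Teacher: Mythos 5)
Your treatment of the first two products is correct and essentially the paper's argument: bilinearity lets you write $[\eta_4,2\iota_4]=2[\eta_4,\iota_4]$ and $[\eta_4^2,2\iota_4]=2[\eta_4^2,\iota_4]$, and these vanish; the paper instead moves the factor $2$ onto the other variable and uses $2\eta_4=2\eta_4^2=0$, while you invoke the $2$-torsion of $\pi_8(\mathbb{S}^4)\cong\mathbb{Z}_2\oplus\mathbb{Z}_2$ and $\pi_9(\mathbb{S}^4)\cong\mathbb{Z}_2\oplus\mathbb{Z}_2$ --- either route is fine and they are the same idea.

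The gap is the third product, which you yourself identify as the only genuinely nontrivial case and then leave as ``grind against Toda's tables.'' Your reduction $[\eta_4,\eta_4^2]=\pm[\iota_4,\iota_4]\circ\eta_7\eta_8\eta_9$ is sound (it is the smash-product naturality of the Whitehead product applied to $\eta_4=\Sigma\eta_3$ and $\eta_4^2=\Sigma\eta_3^2$), but the vanishing does not follow from the group structure of $\pi_{10}(\mathbb{S}^4)$ alone, and your fallback via anticommutativity is a dead end: $[\eta_4,\eta_4^2]=(-1)^{5\cdot 6}[\eta_4^2,\eta_4]$ with sign $+1$, so no torsion constraint results. The computation you defer is exactly the content of the lemma and must be carried out: by Toda, $\eta_7\eta_8\eta_9=\eta_7^3=4\nu_7$ (in the $2$-primary component, which is all that matters since $\eta_7^3$ has order at most $2$), so
\begin{equation*}
[\iota_4,\iota_4]\circ\eta_7^3=4\bigl([\iota_4,\iota_4]\circ\nu_7\bigr)=\pm 4\bigl(2\nu_4^2-\Sigma\nu'\circ\nu_7\bigr)=\pm\bigl(8\nu_4^2-(4\Sigma\nu')\circ\nu_7\bigr)=0,
\end{equation*}
using $[\iota_4,\iota_4]=\pm(2\nu_4-\Sigma\nu')$, that $\nu_4^2$ generates $\pi_{10}^4\cong\mathbb{Z}_8$, and that $\Sigma\nu'$ has order four. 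The paper reaches the same conclusion by a slightly different factorization, writing $[\eta_4,\eta_4^2]=\eta_4\circ[\iota_5,\iota_5]\circ\eta_9$ with $[\iota_5,\iota_5]=\nu_5\circ\eta_8$ and $\eta_4\circ\nu_5=\Sigma\nu'\circ\eta_7$, ending with $\Sigma\nu'\circ 4\nu_7=0$; the two routes use the same Toda relations, and yours is marginally more direct, but as written your proposal does not establish the case that the lemma is really about.
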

\begin{proof}Certainly, $[\eta_4,2\iota_4]=[2\eta_4,\iota_4]=0$
and $[\eta_4^2,2\iota_4]=[2\eta_4^2,\iota_4]=0$. Further, by \cite[(5.9)]{toda}, $\eta_3\circ \nu_4=\nu'\circ\eta_6$, by \cite[(5.5)]{toda},
$4\nu_n=\eta_n^3$ for $n\geq 5$, and by \cite[Proposition~5.6]{toda}, $\Sigma\nu'$ has order four.

But
$[\eta_4,\eta_4^2] = \eta_4 \circ [\iota_5,\eta_5] = \eta_4 \circ [\iota_5,\iota_5] \circ \eta_9$ and $[\iota_5,\iota_5]=\nu_5\circ\eta_8$, by  \cite[(5.10)]{toda}. Thus
$[\eta_4,\eta_4^2]= (\eta_4\circ \nu_5) \circ \eta_8 \circ\eta_9 = \Sigma\nu' \circ\eta_7 \circ \eta_8 \circ \eta_9 = \Sigma\nu' \circ 4 \nu_7 = 4\Sigma\nu'\circ\eta_7=0$.
\end{proof}

To prove the next proposition, we recall from \cite[Chapter~XIV]{toda} that \begin{equation*}\pi_{11}(\mathbb{S}^4)=\mathbb{Z}_3\{\alpha_2(4)\}\oplus \mathbb{Z}_5\{\alpha_1'(4)\} \end{equation*} and for the $2$-primary components,
in view of \cite[Propositions~5.9 and 5.11]{toda}, we have:
\begin{align*}
\pi_{9}^4&=\mathbb{Z}_2\{\nu_4\circ\eta_7^2\}\oplus\mathbb{Z}_2\{\Sigma\nu'\circ\eta_7^2\}, &
\pi_{10}^4&=\mathbb{Z}_8\{\nu_4^2\}.
\end{align*}

Moreover, from \cite[Proposition~IV.5]{serre},  Serre's isomorphism for any odd $p$-primary components
\[ \pi_{i-1}(\mathbb{S}^{2m-1};p)\oplus\pi_i(\mathbb{S}^{4m-1};p)\simeq \pi_i(\mathbb{S}^{2m};p)\] is given by
$(f,g)\mapsto \Sigma f+[\iota_{2m},\iota_{2m}]\circ g$.

\begin{proposition}\label{prop.order.15} The groups $[\pi_{9}(\mathbb{S}^4),\eta_4^2]$ and $[\pi_{10}(\mathbb{S}^4),\eta_4]$ are trivial.
In particular, $J(\eta_4,\eta_4^2,2\iota_4)=[\pi_{11}(\mathbb{S}^4),2\iota_4]$ and it is a subgroup of $\pi_{14}(\mathbb{S}^4)$ with order fifteen. In addition,
the triple spherical Whitehead product
\[[\eta_4,\eta_4^2,2\iota_4]=(4x(\nu_4\circ\sigma')+2y(\Sigma\varepsilon'))+J(\eta_4,\eta_4^2,2\iota_4)\] for some $x,y\in\{0,1\}$.
\end{proposition}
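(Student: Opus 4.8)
The plan is to compute the three summands of $J(\eta_4,\eta_4^2,2\iota_4)$ separately, then pin down the coset representative by a dimension and order count in $\pi_{14}(\mathbb{S}^4)$. First I would handle $[\pi_9(\mathbb{S}^4),\eta_4^2]$: using the explicit generators $\nu_4\circ\eta_7^2$ and $\Sigma\nu'\circ\eta_7^2$ of the $2$-primary part $\pi_9^4$ (and noting $\pi_9(\mathbb{S}^4)$ has no odd torsion, since by Serre's isomorphism the odd components reduce to $\pi_8(\mathbb{S}^3;p)\oplus\pi_9(\mathbb{S}^7;p)$, both trivial), I would expand $[\alpha,\eta_4^2]=\alpha\circ[\iota_9,\eta_4^2]$-type expressions via bilinearity of the Whitehead product and the relation $\eta^3=4\nu$, reducing everything to multiples of $\Sigma\nu'\circ(\text{product of }\eta\text{'s})$ which vanish exactly as in Lemma~\ref{lemma.foo} (the key identity being $4\Sigma\nu'=0$ absorbing the $\eta^3=4\nu$ factor). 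Then $[\pi_{10}(\mathbb{S}^4),\eta_4]$: here $\pi_{10}^4=\mathbb{Z}_8\{\nu_4^2\}$ and again no odd torsion, so it suffices to show $[\nu_4^2,\eta_4]=0$; I would write this as $\nu_4^2\circ[\iota_{10},\eta_4]$ (or symmetrically pull $\eta$ through), use $[\iota_{10},\iota_{10}]$-style identities and the fact that $8\nu_4^2=0$ kills the relevant term, so the whole subgroup is trivial.

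Once those two summands are zero, $J(\eta_4,\eta_4^2,2\iota_4)=[\pi_{11}(\mathbb{S}^4),2\iota_4]$. Since $\pi_{11}(\mathbb{S}^4)=\mathbb{Z}_3\{\alpha_2(4)\}\oplus\mathbb{Z}_5\{\alpha_1'(4)\}$ has order $15$ and $2\iota_4$ acts invertibly on odd torsion (multiplication by $2$ being an isomorphism on $\mathbb{Z}_3$ and $\mathbb{Z}_5$, and $[\beta,2\iota_4]=2[\beta,\iota_4]$), the Whitehead map $\beta\mapsto[\beta,2\iota_4]$ is injective on this group, so $[\pi_{11}(\mathbb{S}^4),2\iota_4]$ is a subgroup of $\pi_{14}(\mathbb{S}^4)$ of order exactly $15$. (I would double-check that $[\alpha_i(4),2\iota_4]\ne0$ using that $[\iota_4,\iota_4]$ is non-torsion-killing on the odd part via Serre's isomorphism, so composition with a non-zero odd $\alpha$ stays non-zero.) This proves the "in particular" clauses.

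For the final assertion, the idea is that Hardie's theorem gives $[\eta_4,\eta_4^2,2\iota_4]$ as a coset of $J=J(\eta_4,\eta_4^2,2\iota_4)$ in $\pi_{14}(\mathbb{S}^4)$, so I only need to locate one representative modulo $J$. Since $J$ is the order-$15$ odd-torsion subgroup, the coset is determined by the image of $[\eta_4,\eta_4^2,2\iota_4]$ in the $2$-primary quotient $\pi_{14}^4/(\text{nothing, since }J\text{ is odd})$, i.e.\ in the $2$-component $\pi_{14}^4$ itself. From Toda's tables $\pi_{14}^4$ is built from $\nu_4\circ\sigma'$ and $\Sigma\varepsilon'$ (and possibly $\nu_4\circ\eta\cdots$ terms that are killed), and Porter/Hardie naturality plus the element being a coset of $J$ forces the representative to lie in the subgroup generated by $4(\nu_4\circ\sigma')$ and $2(\Sigma\varepsilon')$ — the factors $4$ and $2$ coming from the orders of $\sigma'$-related and $\varepsilon'$-related classes after the $2\iota_4$ and $\eta^2$ inputs are taken into account. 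Hence the representative is $4x(\nu_4\circ\sigma')+2y(\Sigma\varepsilon')$ with $x,y\in\{0,1\}$, giving the stated form. The main obstacle will be the last step: precisely identifying which $2$-primary classes in $\pi_{14}(\mathbb{S}^4)$ can occur — this requires careful bookkeeping with Toda's composition relations (the structure of $\pi_{14}^4$, the effect of composing with $\eta_4^2$ and $2\iota_4$, and showing the "cross terms" die), and it is here, rather than in the easier vanishing of $[\pi_9,\eta_4^2]$ and $[\pi_{10},\eta_4]$, that the real work lies.
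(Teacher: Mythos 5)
Your treatment of the first two claims is broadly the paper's route (restrict to $2$-primary components because $\eta_4$ and $\eta_4^2$ have order two, then compute on Toda's generators), and the order-fifteen count via Serre's isomorphism is the same. But two of your intermediate identities are not right: $[\nu_4^2,\eta_4]=\nu_4^2\circ[\iota_{10},\eta_4]$ is not a valid manipulation of Whitehead products (the left entry does not pull out like that), and ``$8\nu_4^2=0$ kills the relevant term'' is not the mechanism. The correct tool is $[\alpha\circ\Sigma\alpha',\beta\circ\Sigma\beta']=[\alpha,\beta]\circ\Sigma(\alpha'\wedge\beta')$ together with $[\nu_4,\iota_4]=\pm 2\nu_4^2$ and $[\Sigma\nu',\iota_4]=\pm 4\nu_4^2$: the even multiple is absorbed because the smash factor $\Sigma(\eta_6^2\wedge\eta_3^2)$ (resp.\ $\Sigma(\nu_6\wedge\eta_3)$) contains an $\eta$, of order two. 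These are repairable, so I regard this part as essentially correct in outline.

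The genuine gap is in the final assertion, exactly where you concede ``the real work lies,'' and your sketch is missing both ideas that make it work. First, the coefficients $4$ and $2$ do not come from ``the orders of $\sigma'$-related and $\varepsilon'$-related classes after the $2\iota_4$ and $\eta^2$ inputs are taken into account''; they come from showing that any representative $\alpha$ of the coset satisfies $2\alpha\in J(\eta_4,\eta_4^2,2\iota_4)$. This uses $2\eta_4=0$: by Corollaries~\ref{cor.fi.zero} and \ref{c1}, $0\in[2\eta_4,\eta_4^2,2\iota_4]\subseteq 2[\eta_4,\eta_4^2,2\iota_4]=2\alpha+J$, hence $30\alpha=0$ and the $2$-primary part of $\alpha$ lies in the $2$-torsion subgroup of $\pi_{14}^4\cong\mathbb{Z}_8\{\nu_4\circ\sigma'\}\oplus\mathbb{Z}_4\{\Sigma\varepsilon'\}\oplus\mathbb{Z}_2\{\eta_4\circ\mu_5\}$, i.e.\ it has the form $4x'(\nu_4\circ\sigma')+2y'(\Sigma\varepsilon')+z'(\eta_4\circ\mu_5)$. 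Second, you must still rule out the $\eta_4\circ\mu_5$ summand, which your proposal never addresses. The paper does this by invoking Theorem~\ref{teo.porter.glue}\ref{lemma.zerobelongs} (the suspension of any element of a higher Whitehead product vanishes), so $0=\Sigma\alpha'=4x'(\nu_5\circ\Sigma\sigma')+2y'(\Sigma^2\varepsilon')+z'(\eta_5\circ\mu_6)$, and then Toda's relations $\nu_5\circ\Sigma\sigma'=2(\nu_5\circ\sigma_8)=\pm\Sigma^2\varepsilon'$ place the first two terms in the $\nu_5\circ\sigma_8$ summand, forcing $z'=0$. Without these two steps the stated form $4x(\nu_4\circ\sigma')+2y(\Sigma\varepsilon')$ of the representative is not established.
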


\begin{proof}
First, recall that for $f\in \pi_k(\mathbb{S}^m)$ and $g\in\pi_l(\mathbb{S}^m)$ with relatively prime orders, the Whitehead product
$[f,g]=0$. Hence, because the orders of $\eta_4$ and $\eta_4^2$ are two, we can restrict to $\pi_{9}^4$ and $\pi_{10}^4$ only.

For the first statement notice that  \[[\nu_4^2,\eta_4]=[\nu_4,\iota_4]\circ \Sigma(\nu_6\wedge\eta_3) \] and
\[[\nu_4\circ\eta_7^2,\eta_4^2]=[\nu_4,\iota_4]\circ \Sigma(\eta_6^2\wedge\eta_3^2). \]
Because $[\nu_4,\iota_4]=\pm 2\nu_4^2$ (\cite[Corollary~(7.4)]{barcus}), we derive that $[\nu_4^2,\eta_4]=0$ and  $[\nu_4\circ\eta_7^2,\eta_4^2]=0$.

Next, $[\Sigma\nu',\iota_4]=[\iota_4,\iota_4]\circ \Sigma\nu'$ and \cite[(5.5), (5.8) and Proposition 5.11]{toda} implies $[\Sigma\nu',\iota_4]=\pm 4\nu_2^2$. Consequently, \[ [\Sigma\nu'\circ\eta_7^2,\iota_4\circ\eta_4^2]=[\Sigma\nu',\iota_4]\circ \Sigma(\eta_6^2\wedge \eta_3^2)=0. \]

This proves that $J(\eta_4,\eta_4^2,2\iota_4)=[\pi_{11}(\mathbb{S}^4),2\iota_4]$.

To determine its order, notice that by means of Serre's isomorphisms \[ \pi_{13}(\mathbb{S}^3;p)\oplus \pi_{14}(\mathbb{S}^7;p)\simeq \pi_{14}
(\mathbb{S}^4;p) \] for $p=3,5$, the orders of $[\alpha_2(4),\iota_4]=[\iota_4,\iota_4]\circ \alpha_2(7)$ and $[\alpha'_1(4),\iota_4]=
[\iota_4,\iota_4]\circ \alpha'_1(7)$ are three and five, respectively. This implies that the order of $J(\eta_4,\eta_4^2,2\iota_4)$ is fifteen.

Finally, let $\alpha\in \pi_{14}(\mathbb{S}^4)$ be such that $[\eta_4,\eta_4^2,2\iota_4]=\alpha+J(\eta_4,\eta_4^2,2\iota_4)$. Since $2\eta_4=0$,
in view of Corollaries~\ref{cor.fi.zero} and \ref{c1}, we deduce that  \[0\in [2\eta_4,\eta_4^2,2\iota_4]\subseteq 2[\eta_4,\eta_4^2,2\iota_4]= 2\alpha +J(\eta_4,
\eta_4^2,2\iota_4).\] Consequently, $2\alpha\in J(\eta_4,\eta_4^2,2\iota_4)$ and so $30\alpha=0$. Hence, $\alpha=a\alpha'+b\beta$ for $\alpha'\in \pi_{14}^4$
with order two and $\beta\in J(\eta_4,\eta_4^2,2\iota_4)$ for some $a\in\{0,1\}$ and $b\in\{0,\dots,14\}$. This implies that
$[\eta_4,\eta_4^2,2\iota_4]=\alpha+J(\eta_4,\eta_4^2,2\iota_4)=a\alpha'+J(\eta_4,\eta_4^2,2\iota_4)$ and $a\alpha'\in [\eta_4,\eta_4^2,2\iota_4]$.

In view of \cite[Theorem 7.3]{toda}, it holds $\pi_{14}^4=\mathbb{Z}_8\{\nu_4\circ\sigma'\}\oplus\mathbb{Z}_4\{\Sigma\varepsilon'\}\oplus\mathbb{Z}_2\{\eta_4\circ\mu_5\}$.
Then, we derive that $\alpha'=4x'(\nu_4\circ\sigma')+2y'(\Sigma\varepsilon')+z'(\eta_4\circ\mu_5)$ for some $x',y',z'\in\{0,1\}$. Because $\alpha'\in [\eta_4,\eta_4^2,2\iota_4]$, we get
from Theorem~\ref{teo.porter.glue}\ref{lemma.zerobelongs} that $0=\Sigma\alpha'=4x'(\nu_5\circ\Sigma\sigma')+2y'(\Sigma^2\varepsilon')+z'(\eta_5\circ\mu_6)$. But, by \cite[(7.10) and (7.16)]{toda}, it holds $2(\nu_5\circ\sigma_8)=\pm\Sigma^2\varepsilon'$
and $2(\nu_5\circ\sigma_8)=\nu_5\circ\Sigma\sigma'$. Thus, $z'(\eta_5\circ\mu_6)=0$, so $z'=0$ and
$\alpha'=4x'(\nu_4\circ\sigma')+2y'(\Sigma\varepsilon')$ for some $x',y'\in\{0,1\}$ which completes the proof.
\end{proof}

\section{Main result}\label{sec.main}
Theorem~\ref{teo.porter.2.6} provides the following commutative diagram
\begin{equation*}
\begin{aligned}
\xymatrix@C=1.5cm@R=.6cm{
\Sigma^{r-2}\Lambda^{(i)}(\underline{A}) \ar@{^{(}->}[d] \ar@{^{(}->}[r] & \textstyle\bigvee\limits_{j=1}^r \Sigma^{r-2}
\Lambda^{(j)}(\underline{A}) \ar@{^{(}->}[d] \ar[r]^(.6){\theta} & T_2\Sigma(\underline{A}) \ar@{^{(}->}[d]^{k} \\
C(\Sigma^{r-2}\Lambda^{(i)}(\underline{A})) \ar@{^{(}->}[r] & C\left(\textstyle\bigvee\limits_{j=1}^r \Sigma^{r-2}\Lambda^{(j)}
(\underline{A})\right) \ar[r] & T_1\Sigma(\underline{A})
}
\end{aligned}
\end{equation*}
which lead to pair-maps
\begin{equation}\label{eq.Theta}
(h_i',h_i''): \bigl(C(\Sigma^{r-2}\Lambda^{(i)}(\underline{A})), \Sigma^{r-2}\Lambda^{(i)}(\underline{A}) \bigr) \to (T_1\Sigma(\underline{A}), T_2\Sigma(\underline{A}) ),
\end{equation}
for $i=1,\dots ,r$.

Let $\Theta_i=(h_i',h_i'')\in \pi_1(\Sigma^{r-2}\Lambda^{(i)}(\underline{A}),k)$ and
$\jmath_i:\Sigma A_i \hookrightarrow T_2\Sigma(\underline{A})$ be the canonical inclusion map for $i=1,\dots ,r$, where
$k:T_2\Sigma(\underline{A})\hookrightarrow T_1\Sigma(\underline{A})$ is the inclusion map.
Then each relative generalized Whitehead product
\begin{multline*}
[\Theta_i,\jmath_i]_R : \bigl( C(\Sigma(\Sigma^{r-3}\Lambda^{(i)}(\underline{A})\wedge A_i)),  \Sigma(\Sigma^{r-3}\Lambda^{(i)}(\underline{A}) \wedge A_i) \bigr)  \\   \to (T_1\Sigma(\underline{A}),T_2\Sigma(\underline{A}))
\end{multline*}
simplifies to
\begin{equation*}
\tau_i^*[\Theta_i,\jmath_i]_R : \bigl(C(\Sigma^{r-2}\Lambda(\underline{A}) ) , \Sigma^{r-2}\Lambda(\underline{A})\bigr) \to (T_1\Sigma(\underline{A}), T_2\Sigma(\underline{A}) ),
\end{equation*}
where $\tau_i=(C\Sigma^{r-2}\sigma_i,\Sigma^{r-2}\sigma_i)$ for $i=1,\dots,r$, and $\sigma_i\in S_r$ is the permutation inducing the homeomorphism $\sigma_i: \Lambda(\underline{A})\to \Lambda^{(i)}(\underline{A})\wedge A_i$.

Hence, we got the relative generalized Whitehead products $\tau_i^*[\Theta_i,\jmath_i]_R\in \pi_1(\Sigma^{r-2}\Lambda(\underline{A}), k)$
for $i=1,\dots ,r$.

Next, consider the commutative diagram
\begin{equation*}
\xymatrix@R=1cm@C=.8cm{ \pi_1(\Sigma^{r-1}\Lambda(\underline{A}),k') \ar[d]_-{\delta_{\ast 1}} \ar@/^4ex/[dr]^-{\partial} & \\
\pi(\Sigma^{r-1}\Lambda(\underline{A}),T_1\Sigma(\underline{A})) \ar[r]^-{j_\ast} &
\pi_1(\Sigma^{r-2}\Lambda(\underline{A}),k) \ar[r]^-{\delta_{\ast 2}}
& \pi(\Sigma^{r-2}\Lambda(\underline{A}),T_2\Sigma(\underline{A}))\rlap{,}}
\end{equation*}
where $k:T_2\Sigma(\underline{A})\hookrightarrow T_1\Sigma(\underline{A})$ and $k':T_1\Sigma(\underline{A})\hookrightarrow T_0\Sigma(\underline{A})$ are inclusion maps, $\delta_{\ast 1},\delta_{\ast 2}$ are boundaries and $j_*$ is the obvious map.

Since the bottom row is exact and $\delta_{\ast 1}(\Omega_r,\omega_r)=\omega_r$, we obtain $\partial(\Omega_r,\omega_r)=j_\ast(\omega_r)$ and so
$\delta_{\ast 2}\partial(\Omega_r,\omega_r)=\delta_{\ast 2} j_*(\omega_r)=0$.

Following {\em mutatis mutandis} the result due to Nakaoka--Toda \cite[Lemma (1.2)]{nakaoka-toda} for spheres and
a proof of its generalization \cite[Formula (0.1)]{hardie2}, we may state for suspensions:
\begin{lemma}\label{lemma.h}If
$ h=\sum\limits_{i=1}^{r} \tau_i^* [\Theta_i,\jmath_i]_R \in \pi_1(\Sigma^{r-2}\Lambda(\underline{A}),k)$ then $h=\partial(\Omega_r,\omega_r)$.
\end{lemma}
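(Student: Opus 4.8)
The strategy is to identify both sides of the claimed equality inside the relative homotopy set $\pi_1(\Sigma^{r-2}\Lambda(\underline{A}),k)$ by means of the generalized separation element, following the template of Nakaoka--Toda \cite{nakaoka-toda} and Hardie \cite{hardie2}. The starting point is the observation, recorded just before the statement, that $\partial(\Omega_r,\omega_r)=j_\ast(\omega_r)$ lies in the image of $j_\ast$, hence $\delta_{\ast 2}\partial(\Omega_r,\omega_r)=0$; similarly each summand $\tau_i^\ast[\Theta_i,\jmath_i]_R$ has trivial image under $\delta_{\ast 2}$, because $\delta_{\ast 2}$ of a relative product of this shape is (up to sign and a permutation) a lower-order Whitehead product $\theta$-composite which vanishes in $T_2\Sigma(\underline{A})$ by the splitting in Theorem~\ref{teo.porter.2.6}. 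Thus both $h$ and $\partial(\Omega_r,\omega_r)$ lie in $\operatorname{im} j_\ast = \ker\delta_{\ast 2}$, and it suffices to compare them after lifting along $j_\ast$, i.e.\ to work in $\pi(\Sigma^{r-1}\Lambda(\underline{A}),T_1\Sigma(\underline{A}))$ modulo the indeterminacy $\operatorname{im}\delta_{\ast 1}=\ker j_\ast$.

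First I would make explicit, using equation \eqref{eq.trick}, that each $\tau_i^\ast[\Theta_i,\jmath_i]_R$ equals the pair-map $\bigl([h_i',k\jmath_i]', [h_i'',\jmath_i]\bigr)$ after the coordinate permutation $\tau_i$, so that its ``upper'' component is governed by the map $[h_i',k\jmath_i]' : C\Sigma^{r-2}\Lambda(\underline{A})\to T_1\Sigma(\underline{A})$ built from $\lambda$ and the explicit formula preceding \eqref{eq.trick}. Next I would invoke Proposition~\ref{prop.separation} together with the additivity and naturality properties of $d$ collected in Proposition~\ref{prop.sep.properties}, and Proposition~\ref{prop.dif.7}, to translate the sum $\sum_i \tau_i^\ast[\Theta_i,\jmath_i]_R$ into a single separation element of the form $d(\Omega_r,\Omega_r')$ for a suitable extension $\Omega_r'$ of $\omega_r$ over the cone coming from the glued-up fat-wedge description in Theorem~\ref{teo.porter.2.6}(iii). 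The point is that $\Omega_r$ is the canonical null-homotopy of $\omega_r$ provided by the homotopy equivalence \eqref{equ}, while the $i$-th summand contributes precisely the part of a second null-homotopy supported on the $i$-th cone $C(\Sigma^{r-2}\Lambda^{(i)}(\underline{A}))$ in the decomposition of $T_1\Sigma(\underline{A})$; summing over $i$ reassembles a null-homotopy of $\omega_r$ over all of $T_1\Sigma(\underline{A})$, which is exactly what $\partial(\Omega_r,\omega_r)=j_\ast(\omega_r)$ measures. This is the step where one must be careful with the factor-by-factor reparametrizations (the $\sigma'_{2t-1}$, $\sigma_{\frac12(4t-1)}$ and the $4t$, $4-4t$ rescalings) and check they glue, but it is a routine if tedious parameter bookkeeping once the shape is right.

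The main obstacle, and the place where one genuinely needs the Nakaoka--Toda argument rather than a formal manipulation, is controlling the \emph{order} in which the $r$ cones are attached and verifying that the telescoping of the $r$ partial null-homotopies produces no extra correction terms — i.e.\ that the ``error'' between $\sum_i \tau_i^\ast[\Theta_i,\jmath_i]_R$ and $j_\ast(\omega_r)$ lies in $\ker j_\ast = \operatorname{im}\delta_{\ast 1}$, and in fact is zero on the nose. Concretely, one fixes the homotopy equivalence $T_1\Sigma(\underline A)\simeq T_2\Sigma(\underline A)\cup_\theta \bigvee_i C(\Sigma^{r-2}\Lambda^{(i)}(\underline A))$ from Theorem~\ref{teo.porter.2.6}(iii) and, pulling $\omega_r$ back along it, writes $\omega_r$ as a ``sum over $i$'' of its components landing in the $i$-th cone; each such component is, by construction of $\theta$ and of the $\Theta_i$ via \eqref{eq.Theta}, exactly the relative Whitehead map underlying $[\Theta_i,\jmath_i]_R$ after the permutation $\tau_i$. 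Matching this decomposition with the additivity of $\partial$ and $\delta_{\ast 1}$, and using $\delta_{\ast 1}(\Omega_r,\omega_r)=\omega_r$, yields $\partial(\Omega_r,\omega_r)=\sum_{i=1}^r \tau_i^\ast[\Theta_i,\jmath_i]_R = h$, which is the assertion. I would organize the write-up so that the combinatorial core — the identification of the $i$-th piece of $\omega_r$ with the $i$-th relative product — is isolated as the single nontrivial claim, everything else being the formal properties of $d$, of $j_\ast$, $\delta_\ast$, $\partial$, and of the relative product already established above.
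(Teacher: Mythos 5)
Your plan follows essentially the same route as the paper, which in fact offers no proof of this lemma at all: it is asserted \emph{mutatis mutandis} from Nakaoka--Toda's Lemma (1.2) and Hardie's proof of the Nakaoka--Toda formula, and that argument is precisely the separation-element/cone-decomposition scheme you outline (both sides in $\ker\delta_{\ast 2}=\operatorname{im}j_\ast$, the splitting of Theorem~\ref{teo.porter.2.6}, and the identification of the $i$-th cone's contribution with $\tau_i^\ast[\Theta_i,\jmath_i]_R$). Your ``single nontrivial claim'' is left as an assertion, but the paper leaves it at the same (indeed lesser) level of detail, so the two treatments agree.
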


Notice that in view of the formula \eqref{eq.trick}, we have \[ [\Theta_i,\jmath_i]_R=\bigl([h'_i,k\jmath_i]',[h''_i,\jmath_i]\bigr)\]
for $i=1,\dots,r$. Consequently,
\[\textstyle h= (h',h'') =\left(\sum\limits_{i=1}^{r}(C\Sigma^{r-2}\sigma_i)^\ast[h'_i,k\jmath_i]',\sum\limits_{i=1}^{r}(\Sigma^{r-2}\sigma_i)^\ast[h''_i,\jmath_i] \right).\]

Now, we are in a position to generalize the main result of \cite[Theorem~4.3]{hardie}.

\begin{theorem}\label{teo.main}Let $f:T_2\Sigma(\underline{A})\to X$ be any map, $\jmath_i:\Sigma A_i \hookrightarrow T_2\Sigma(\underline{A})$
be the canonical inclusion maps, and $f_i=f \jmath_i:\Sigma A_i\to X$ for $i=1,\dots ,r$.
\begin{enumerate}[label=\textup{(\roman*)}]
\item\label{teo.main.a} If $f',f'':T_1\Sigma(\underline{A}) \to X$ are  extensions of $f$ then \begin{equation*}
\omega_r({f'})-\omega_r({f''}) \in \textstyle\sum\limits_{i=1}^{r} (\Sigma^{r-1}\sigma_i)^*[ \pi(\Sigma^{r-1}\Lambda^{(i)}(\underline{A}),X ),f_i ],
\end{equation*}
where $\sigma_i\in S_r$ is the permutation inducing the homeomorphism\linebreak $\sigma_i:  \Lambda(\underline{A}) \to \Lambda^{(i)}(\underline{A})\wedge A_i$ for $i=1,\dots ,r$.

\item\label{teo.main.b} If $f': T_1\Sigma(\underline{A})\to X$ is an extension of $f$ then for any  \[\gamma \in \textstyle\sum\limits_{i=1}^{r} (\Sigma^{r-1}\sigma_i)^*[ \pi(\Sigma^{r-1}\Lambda^{(i)}(\underline{A}),X ),f_i ], \] there exists $f'': T_1\Sigma(\underline{A})\to X$ an extension of $f$ such that $\gamma=\omega_r({f'})-\omega_r({f''})$.
\end{enumerate}
\end{theorem}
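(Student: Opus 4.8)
The plan is to adapt Hardie's proof of \cite[Theorem~4.3]{hardie} from triple products to the $r$\textsuperscript{th}-order setting; the substitutes for his triple-product tools are the separation-element calculus of Proposition~\ref{prop.sep.properties}, the relation between the products $[\,\cdot\,{,}\,\cdot\,]'$ and $d$ of Proposition~\ref{prop.separation}, the decomposition \eqref{eq.trick}, and Lemma~\ref{lemma.h}. First I would record the identity that drives everything. By Lemma~\ref{lemma.h} together with \eqref{eq.trick}, the class $h=\partial(\Omega_r,\omega_r)\in\pi_1(\Sigma^{r-2}\Lambda(\underline{A}),k)$ has first coordinate $\sum_{i=1}^r(C\Sigma^{r-2}\sigma_i)^\ast[h'_i,k\jmath_i]'$; on the other hand $\partial(\Omega_r,\omega_r)=j_\ast(\omega_r)$, and, as recalled in the proof of Proposition~\ref{prop.sep.properties}\ref{prop.dif.7}, $j_\ast$ is induced by the collapse $q\colon C\Sigma^{r-2}\Lambda(\underline{A})\to\Sigma^{r-1}\Lambda(\underline{A})$, so that first coordinate is also $\omega_r q$. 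Hence $\omega_r q=\sum_{i=1}^r(C\Sigma^{r-2}\sigma_i)^\ast[h'_i,k\jmath_i]'$, and composing with any extension $f'\colon T_1\Sigma(\underline{A})\to X$ of $f$ and using \eqref{eq.double} with $f'k\jmath_i=f_i$ yields
\[
f'\omega_r q=\textstyle\sum_{i=1}^r(C\Sigma^{r-2}\sigma_i)^\ast[f'h'_i,f_i]',
\]
and likewise for a second extension $f''$.

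For \ref{teo.main.a}, note that $q$ collapses the boundary, so $f'\omega_r q=\omega_r(f')\circ q$ is constant on $\Sigma^{r-2}\Lambda(\underline{A})$; hence the separation element $d(f'\omega_r q,f''\omega_r q)\colon\Sigma^{r-1}\Lambda(\underline{A})\to X$ is defined and, directly from the definition of $d$, equals $\omega_r(f')-\omega_r(f'')$. I would then chain three facts: finite additivity of $d$ (Proposition~\ref{prop.sep.properties}\ref{prop.dif.5}), applicable because the summands $(C\Sigma^{r-2}\sigma_i)^\ast[f'h'_i,f_i]'$ and $(C\Sigma^{r-2}\sigma_i)^\ast[f''h'_i,f_i]'$ agree on $\Sigma^{r-2}\Lambda(\underline{A})$ (both restrict there to $(\Sigma^{r-2}\sigma_i)^\ast[fh''_i,f_i]$); co-naturality (Proposition~\ref{prop.sep.properties}\ref{prop.dif.4.1}), which turns $(C\Sigma^{r-2}\sigma_i)^\ast$ into $(\Sigma^{r-1}\sigma_i)^\ast$ outside $d$; and Proposition~\ref{prop.separation} with $k=\iota_X$, which rewrites $d([f'h'_i,f_i]',[f''h'_i,f_i]')$ as $[\,d(f'h'_i,f''h'_i)\,{,}\,f_i\,]$. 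This gives
\[
\omega_r(f')-\omega_r(f'')=\textstyle\sum_{i=1}^r(\Sigma^{r-1}\sigma_i)^\ast\bigl[\,d(f'h'_i,f''h'_i)\,{,}\,f_i\,\bigr],
\]
and since $d(f'h'_i,f''h'_i)\in\pi(\Sigma^{r-1}\Lambda^{(i)}(\underline{A}),X)$ this is precisely the claim of \ref{teo.main.a}.

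Part \ref{teo.main.b} runs the same computation backwards. Given $\gamma=\sum_{i=1}^r(\Sigma^{r-1}\sigma_i)^\ast[\phi_i,f_i]$ with $\phi_i\in\pi(\Sigma^{r-1}\Lambda^{(i)}(\underline{A}),X)$, Proposition~\ref{prop.sep.properties}\ref{prop.dif.6} applied to $-\phi_i$ and to the map $f'h'_i\colon C\Sigma^{r-2}\Lambda^{(i)}(\underline{A})\to X$ produces $g''_i\colon C\Sigma^{r-2}\Lambda^{(i)}(\underline{A})\to X$ agreeing with $f'h'_i$ on $\Sigma^{r-2}\Lambda^{(i)}(\underline{A})$ and with $d(f'h'_i,g''_i)=\phi_i$. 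Using the homotopy equivalence $T_1\Sigma(\underline{A})\simeq T_2\Sigma(\underline{A})\cup_\theta\bigvee_{i=1}^r C(\Sigma^{r-2}\Lambda^{(i)}(\underline{A}))$ of Theorem~\ref{teo.porter.2.6}, I would glue $f$ on $T_2\Sigma(\underline{A})$ to the maps $g''_i$ on the cones (the required compatibility along $\Sigma^{r-2}\Lambda^{(i)}(\underline{A})$ being exactly the equality just arranged) to obtain an extension $f''$ of $f$ with $f''h'_i=g''_i$; feeding this into the displayed formula of \ref{teo.main.a} gives $\omega_r(f')-\omega_r(f'')=\sum_{i=1}^r(\Sigma^{r-1}\sigma_i)^\ast[\phi_i,f_i]=\gamma$.

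The step I expect to demand the most care is pinning down the first coordinate of $h$ as $\omega_r q$ and verifying the pairwise compatibility of the $r$ cone cells needed for additivity of $d$: one must keep track of which iterated suspension of which smash product each of $h'_i$, $h''_i$, $[h'_i,k\jmath_i]'$ lives on, and check that the permutations $\sigma_i$ are threaded consistently through $\tau_i$, \eqref{eq.trick}, and co-naturality. The identity $d(\psi q,\psi' q)=\psi-\psi'$ for maps $\psi,\psi'$ out of $\Sigma^{r-1}\Lambda(\underline{A})$ and the gluing in \ref{teo.main.b} are then routine.
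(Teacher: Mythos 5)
Your proof is correct and follows essentially the same route as the paper's: Lemma~\ref{lemma.h} together with \eqref{eq.trick} to decompose $h'$ as a sum of relative products, then additivity and co-naturality of the separation element and Proposition~\ref{prop.separation} for part \ref{teo.main.a}, and Proposition~\ref{prop.sep.properties}\ref{prop.dif.6} plus the push-out description of $T_1\Sigma(\underline{A})$ for part \ref{teo.main.b}. The only (harmless) deviation is at the start of \ref{teo.main.a}: where you identify $h'$ with $\omega_r q$ up to pair-homotopy and must then quietly invoke invariance of $d$ under boundary-compatible homotopies, the paper instead uses Proposition~\ref{prop.sep.properties}\ref{prop.dif.7} to write $\omega_r=d(h',kg')$ with $g'$ mapping into $T_2\Sigma(\underline{A})$, so that naturality and triangularity of $d$ yield $\omega_r(f')-\omega_r(f'')=d(f'h',f''h')$ with all maps agreeing on the nose.
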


\begin{proof}\ref{teo.main.a} By Lemma~\ref{lemma.h} and commutativity of the diagram above, $h=\partial(\Omega_r,\omega_r)=j_*(\omega_r)$ so
that $\omega_r\in j_*^{-1}(h)$. In view of Proposition~\ref{prop.sep.properties}\ref{prop.dif.7}, there exists $g':C\Sigma^{r-2}\Lambda(\underline{A})\to T_2\Sigma(\underline{A})$
with $g'_{\vert \Sigma^{r-2}\Lambda(\underline{A})}=h'_{\vert \Sigma^{r-2}\Lambda(\underline{A})}=h''$ such that $\omega_r=d(h',kg')\in \pi(\Sigma^{r-1}\Lambda(\underline{A}),T_1\Sigma(\underline{A}))$.

Since $f'k=f''k=f$, by \eqref{eq.double} and Propositions~\ref{prop.sep.properties}--\ref{prop.separation}, we get
\begin{align*}
\omega_r(f')-\omega_r(f'') &= d(f'  h',f' k g')- d(f''  h',f'' k g')   \\
&= d(f'  h',f''  h')  \\
&= d\left(\textstyle\sum\limits_{i=1}^{r} f'\bigl((C\Sigma^{r-2}\sigma_i)^\ast[h'_i,k\jmath_i]'\bigr), \textstyle\sum\limits_{i=1}^{r} f''\bigl((C\Sigma^{r-2}\sigma_i)^\ast[h'_i,k\jmath_i]'\bigr)\right)\\
&= \textstyle\sum\limits_{i=1}^{r} d\bigl((C\Sigma^{r-2}\sigma_i)^\ast[f'h'_i,f'k \jmath_i]',(C\Sigma^{r-2}\sigma_i)^\ast[f''h'_i,f''k\jmath_i]'\bigr) \\
&= \textstyle\sum\limits_{i=1}^{r} (\Sigma^{r-1}\sigma_i)^* [d(f'h'_i,f''h'_i),f_i]\\
&\in \textstyle\sum\limits_{i=1}^{r} (\Sigma^{r-1}\sigma_i)^*[ \pi(\Sigma^{r-1}\Lambda^{(i)}(\underline{A}),X ),f_i ].
\end{align*}

\ref{teo.main.b} Let  $\gamma=\textstyle\sum\limits_{i=1}^{r}(\Sigma^{r-1}\sigma_i)^*[\gamma_i,f_i]$, with
$\gamma_i: \Sigma^{r-1}\Lambda^{(i)}(\underline{A})\to X$.
By~\eqref{eq.Theta}, we obtain $f'  h'_i:C(\Sigma^{r-2}\Lambda^{(i)}(\underline{A}))\to X$ and then,
in view of Proposition~\ref{prop.sep.properties}\ref{prop.dif.6},
there exist maps $g_i:C(\Sigma^{r-2}\Lambda^{(i)}(\underline{A}))\to X$ such that the restrictions\linebreak
${g}_{i\vert \Sigma^{r-2}\Lambda^{(i)}(\underline{A})}=f h''_{i\vert \Sigma^{r-2}\Lambda^{(i)}(\underline{A})}$ and $d(f'  h'_i,g_i)=\gamma_i$ for $i=1,\dots ,r$.

Note that the maps $g_i$ determine a map \[g=g_1\vee \cdots \vee g_r:C\left(\textstyle\bigvee\limits_{j=1}^r \Sigma^{r-2}\Lambda^{(j)}(\underline{A})\right) \to X.\]
Since, by Theorem~\ref{teo.porter.2.6}, the space $T_1\Sigma(\underline{A})$ is a push-out, the universal property guarantees the existence of $f'':T_1\Sigma(\underline{A})\to X$ with $f''_{\vert T_2\Sigma(\underline{A})}=f'_{\vert T_2\Sigma(\underline{A})}=f$ and $f''  h'_i=g_i$.
\begin{equation*}
\begin{aligned}
\xymatrix@C=1cm{
\Sigma^{r-2}\Lambda^{(i)}(\underline{A}) \ar@{^{(}->}[d] \ar@{^{(}->}[r] & \textstyle\bigvee\limits_{j=1}^r \Sigma^{r-2}
\Lambda^{(j)}(\underline{A}) \ar@{^{(}->}[d] \ar[r]^-{\theta} & T_2\Sigma(\underline{A}) \ar@{^{(}->}[d]^{k} \ar@/^3ex/[rdd]^{f} \\
C(\Sigma^{r-2}\Lambda^{(i)}(\underline{A})) \ar@{^{(}->}[r] \ar@/_4ex/[rrrd]_-{g_i}  & C\left(\textstyle\bigvee\limits_{j=1}^r
\Sigma^{r-2}\Lambda^{(j)}(\underline{A})\right) \ar[r] \ar@/_3ex/[rrd]^-{g} & T_1\Sigma(\underline{A}) \ar@{-->}[rd]^{f''} \\
&&& X}
\end{aligned}
\end{equation*}

The same computations as in~\ref{teo.main.a} show that $\gamma=\omega_r(f')-\omega_r(f'')$ and the proof is complete.
\end{proof}

\begin{corollary}\label{CC}
\begin{enumerate}[label=\textup{(\roman*)}]\thmenumhspace
\item\label{CC.1} Let $f_i:\Sigma A_i\to X$ be any maps for $i=1,\dots,r$. Suppose that $[f_1,\dots ,f_r]\neq \emptyset$. Then for any $\beta\in [f_1,\dots ,f_r]$ there is an inclusion
\[\beta + \textstyle\sum\limits_{i=1}^{r} (\Sigma^{r-1}\sigma_i)^*[ \pi(\Sigma^{r-1}\Lambda^{(i)}(\underline{A}),X ),f_i ] \subseteq [f_1,\dots ,f_r]. \]

\item\label{CC.2} If $\omega_r(f)\in [f_1,\dots ,f_r]$ for $f: T_1\Sigma(\underline{A})\to X$ then
\begin{multline*}
\textstyle\sum\limits_{i=1}^{r} (\Sigma^{r-1}\sigma_i)^*[ \pi(\Sigma^{r-1}\Lambda^{(i)}(\underline{A}),X ),f_i ]=\\
\bigl\{ \omega_r(f')-\omega_r(f);\ \text{$f': T_1\Sigma(\underline{A})\to X$ with $f'_{\vert T_2\Sigma(\underline{A})}={f}_{\vert T_2\Sigma(\underline{A})}$} \bigr\}.
\end{multline*}
\item\label{CC.3}
If $[f_1,f_2,f_3]\neq\emptyset$ then it is a coset of the subgroup
\begin{multline*}
(\Sigma^2\sigma_1)^*[\pi(\Sigma^2\Lambda^{(1)}(\underline{A}),X),f_1]+
(\Sigma^2\sigma_2)^*[\pi(\Sigma^2\Lambda^{(2)}(\underline{A}),X),f_2]+\\
(\Sigma^2\sigma_3)^*[ \pi(\Sigma^2\Lambda^{(3)}(\underline{A}),X ),f_3 ]\subseteq \pi(\Sigma^2\Lambda(\underline{A}),X).
\end{multline*}
\end{enumerate}
\end{corollary}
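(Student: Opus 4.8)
The plan is to deduce Corollary~\ref{CC} directly from Theorem~\ref{teo.main}, treating the three items as successive refinements. For item~\ref{CC.1}, I would start from a given $\beta\in[f_1,\dots,f_r]$, so $\beta=\omega_r(f')$ for some extension $f':T_1\Sigma(\underline{A})\to X$ of the wedge map $f_1\vee\cdots\vee f_r$; note that since $[f_1,\dots,f_r]\neq\emptyset$ such an extension exists and, by Theorem~\ref{teo.porter.glue}\ref{lemma.zerobelongs} together with the non-emptiness criterion, the restriction to $T_2\Sigma(\underline{A})$ is itself a well-defined map $f$ with $f\jmath_i=f_i$. Then, given any $\gamma$ in the indicated sum of Whitehead-product subgroups, Theorem~\ref{teo.main}\ref{teo.main.b} produces an extension $f''$ of $f$ with $\gamma=\omega_r(f')-\omega_r(f'')$, whence $\omega_r(f'')=\beta-\gamma\in[f_1,\dots,f_r]$; replacing $\gamma$ by $-\gamma$ (the set is a subgroup, so this is harmless) gives $\beta+\gamma\in[f_1,\dots,f_r]$, which is the claimed inclusion.

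For item~\ref{CC.2}, one inclusion is exactly Theorem~\ref{teo.main}\ref{teo.main.a}: any difference $\omega_r(f')-\omega_r(f)$ with $f'$ and $f$ agreeing on $T_2\Sigma(\underline{A})$ lies in the sum of Whitehead subgroups. The reverse inclusion is Theorem~\ref{teo.main}\ref{teo.main.b} applied with the distinguished extension playing the role of $f'$ there: every $\gamma$ in the sum is realized as $\omega_r(f')-\omega_r(f)$ for a suitable extension $f'$. Here I would just need to observe that the hypothesis $\omega_r(f)\in[f_1,\dots,f_r]$ guarantees $f$ restricts to the correct map on $T_2\Sigma(\underline{A})$, so Theorem~\ref{teo.main} is applicable verbatim.

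For item~\ref{CC.3}, I would specialize to $r=3$, where $\Sigma^{r-1}=\Sigma^2$ and $\Lambda^{(i)}(\underline{A})$ is a smash of two of the three factors, so each $(\Sigma^2\sigma_i)^*[\pi(\Sigma^2\Lambda^{(i)}(\underline{A}),X),f_i]$ is a genuine (generalized) Whitehead-product subgroup of $\pi(\Sigma^2\Lambda(\underline{A}),X)$ — using bilinearity of the generalized Whitehead product when the relevant spaces are suspensions, as recalled in Section~\ref{sec.RelGWP}. Combined, items~\ref{CC.1} and~\ref{CC.2} say that $[f_1,f_2,f_3]$, once non-empty, is a single orbit of the additive action of this sum of subgroups on $\pi(\Sigma^2\Lambda(\underline{A}),X)$, i.e.\ a coset; I would spell this out by fixing $\beta\in[f_1,f_2,f_3]$ and checking that $\{\gamma:\beta+\gamma\in[f_1,f_2,f_3]\}$ equals the subgroup, the $\subseteq$ direction coming from~\ref{CC.2} and the $\supseteq$ direction from~\ref{CC.1}.

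The main obstacle I anticipate is purely bookkeeping rather than conceptual: verifying that the sum $\sum_{i=1}^r(\Sigma^{r-1}\sigma_i)^*[\pi(\Sigma^{r-1}\Lambda^{(i)}(\underline{A}),X),f_i]$ is actually a subgroup of $\pi(\Sigma^{r-1}\Lambda(\underline{A}),X)$ — one needs each summand to be closed under addition and negation, which in turn requires the bilinearity and anti-commutativity of the (relative) generalized Whitehead product from \cite[Proposition~(4.5)]{ando} and the fact that $\Sigma^{r-1}\Lambda^{(i)}(\underline{A})$ is a double suspension, so that $\pi(\Sigma^{r-1}\Lambda^{(i)}(\underline{A}),X)$ is an abelian group and the product $[-,f_i]$ is a homomorphism on it. Once that is in place, everything else is a direct translation of Theorem~\ref{teo.main} into the language of cosets, exactly paralleling Hardie's passage from \cite[Theorem~4.3]{hardie} to the coset description of the triple product.
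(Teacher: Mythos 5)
Your proposal is correct and follows essentially the same route as the paper: item \ref{CC.1} from Theorem~\ref{teo.main}\ref{teo.main.b} applied to $f=f'_{\vert T_2\Sigma(\underline{A})}$ (with the sign of $\gamma$ absorbed by the subgroup property), item \ref{CC.2} from the two parts of Theorem~\ref{teo.main}, and item \ref{CC.3} as a formal consequence of the first two. Your explicit remarks on why the sum of Whitehead-product subgroups is closed under negation and addition are details the paper leaves implicit, but they do not change the argument.
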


\begin{proof}\ref{CC.1} First, for simplicity, let $\Gamma=\textstyle\sum\limits_{i=1}^{r} (\Sigma^{r-1}\sigma_i)^*[ \pi(\Sigma^{r-1}\Lambda^{(i)}(\underline{A}),X ),f_i ]$.
Next, let $f':T_1\Sigma(\underline{A})\to X$ be a map such that $\beta=\omega_r(f')$. Let $f=f'_{\vert T_2\Sigma(\underline{A})}:T_2\Sigma(\underline{A})\to X$ so that $f'$ extends $f$, obviously. By Theorem~\ref{teo.main}\ref{teo.main.b}, for any $\gamma\in \Gamma$ there exists $f'': T_1\Sigma(\underline{A})\to X$ an extension of $f$ such that $\gamma=\omega_r({f'})-\omega_r({f''})$,
or equivalently, $\beta+(-\gamma)=\omega_r(f'')$. Finally, since $f''$ extends the wedge sum map $f_1\vee \cdots\vee f_r$, then $\omega_r(f'')\in [f_1,\dots ,f_r]$ and the result follows.

\ref{CC.2} This follows directly from Theorem~\ref{teo.main}.

\ref{CC.3} This is a consequence of \ref{CC.1} and \ref{CC.2}.
\end{proof}

\section{Some computations for spheres and projective spaces}\label{sec.five}

Because $0\in [0_1,0_2,0_3]$, Corollary~\ref{CC}\ref{CC.3} implies $[0_1,0_2,0_3]=0$ (modulo indeterminacy). Further, we can easily show:

\begin{lemma}\label{Ll}Let $0_i : \Sigma A_i\to X$ be trivial maps for $i=1,\dots,r$ with $r\ge 3$. The following are equivalent:
\begin{enumerate}[label=\textup{(\roman*)}]
\item\label{Ll.i} $[0_1,\dots,0_r]=0$ \textup{(\textit{modulo indeterminacy})} for any space $X$;

\item\label{Ll.ii} the composition
\[\Sigma^{r-1} \Lambda(\underline{A})\xrightarrow{\omega_r}T_1\Sigma(\underline{A})\xrightarrow{p} T_1\Sigma(\underline{A}) \mathbin{/} T_{r-1}\Sigma (\underline{A}) \] is trivial,  where $p$ is the quotient map;

\item\label{Ll.iii} there is a retraction \[  T_0\Sigma(\underline{A}) \mathbin{/} T_{r-1}\Sigma (\underline{A}) \xrightarrow{\rho} T_1\Sigma(\underline{A})
\mathbin{/} T_{r-1}\Sigma (\underline{A}) \] for the map \[  T_1\Sigma(\underline{A}) \mathbin{/} T_{r-1}\Sigma (\underline{A}) \xrightarrow{\bar k}
T_0\Sigma(\underline{A}) \mathbin{/} T_{r-1}\Sigma (\underline{A}) \] induced by the inclusion
$k: T_1\Sigma(\underline{A}) \hookrightarrow T_0\Sigma(\underline{A})$.
\end{enumerate}
\end{lemma}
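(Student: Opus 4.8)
The plan is to prove the three equivalences in a cycle, \ref{Ll.i} $\Rightarrow$ \ref{Ll.ii} $\Rightarrow$ \ref{Ll.iii} $\Rightarrow$ \ref{Ll.i}, exploiting the push-out descriptions of $T_1\Sigma(\underline{A})$ and $T_0\Sigma(\underline{A})$ and the fact that $[0_1,\dots,0_r]$ contains $0$ (Corollary~\ref{cor.fi.zero}), so that the Whitehead set is a coset whose indeterminacy is the subgroup $\Gamma=\sum_{i=1}^r(\Sigma^{r-1}\sigma_i)^*[\pi(\Sigma^{r-1}\Lambda^{(i)}(\underline{A}),X),f_i]$ from Corollary~\ref{CC}\ref{CC.1}. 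Since each $f_i=0_i$, every lower Whitehead product $[{\dots},0_i,{\dots}]$ vanishes, hence $\Gamma=0$; therefore $[0_1,\dots,0_r]=0$ modulo indeterminacy \emph{means exactly} $0\in[0_1,\dots,0_r]$ is the only element, i.e. $\omega_r(f)=0$ for \emph{every} extension $f:T_1\Sigma(\underline{A})\to X$ of the trivial wedge map. The universal instance to keep in mind is $X=T_1\Sigma(\underline{A})$ with $f=\iota$, but for \ref{Ll.i} we must range over all $X$, so the argument must be functorial.

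For \ref{Ll.i}$\Rightarrow$\ref{Ll.ii}: since the wedge map $0_1\vee\cdots\vee 0_r:T_{r-1}\Sigma(\underline{A})\to T_1\Sigma(\underline{A})/T_{r-1}\Sigma(\underline{A})$ is null, the inclusion $T_1\Sigma(\underline{A})\hookrightarrow T_1\Sigma(\underline{A})/T_{r-1}\Sigma(\underline{A})=:Q_1$ composed with any extension is itself an extension of the trivial wedge map into $Q_1$; but is $Q_1$ a legitimate target $X$? Yes. Apply \ref{Ll.i} with $X=Q_1$ and $f=p\circ\iota$ (the composite $T_1\Sigma(\underline{A})\xrightarrow{p}Q_1$, which restricts trivially on $T_{r-1}\Sigma(\underline{A})$ because $T_{r-1}\subseteq T_1$ maps to the basepoint). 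Then \ref{Ll.i} forces $\omega_r(p\circ\iota)=p\circ\omega_r=0$, which is precisely \ref{Ll.ii}. For \ref{Ll.ii}$\Rightarrow$\ref{Ll.iii}: triviality of $p\circ\omega_r$ means that the map $T_1\Sigma(\underline{A})\xrightarrow{p}Q_1$ extends over the mapping cone $T_1\Sigma(\underline{A})\cup_{\omega_r}C(\Sigma^{r-1}\Lambda(\underline{A}))\simeq T_0\Sigma(\underline{A})$ (using the homotopy equivalence \eqref{equ}); passing to the quotient by $T_{r-1}\Sigma(\underline{A})$ on both sides and noting that the cell $C(\Sigma^{r-1}\Lambda(\underline{A}))$ is attached away from $T_{r-1}$, this extension descends to $\rho: T_0\Sigma(\underline{A})/T_{r-1}\Sigma(\underline{A})\to Q_1$ with $\rho\circ\bar k=\mathrm{id}_{Q_1}$, i.e. a retraction. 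For \ref{Ll.iii}$\Rightarrow$\ref{Ll.i}: given a retraction $\rho$ and any $f:T_1\Sigma(\underline{A})\to X$ extending the trivial wedge map, $f$ factors through $Q_1$ as $\bar f\circ p$ for some $\bar f:Q_1\to X$ (since $f|_{T_{r-1}}$ is null); then $\omega_r(f)=f\circ\omega_r=\bar f\circ p\circ\omega_r=\bar f\circ(\bar k\circ\rho\text{-argument})$. More directly: by \eqref{equ}, $\omega_r(f)=0$ iff $f$ extends over $T_0\Sigma(\underline{A})$; the retraction $\rho$ provides such an extension at the level of $Q_1$, namely $\bar f\circ\rho:T_0\Sigma(\underline{A})/T_{r-1}\to X$, precomposed with the quotient $T_0\Sigma(\underline{A})\to T_0\Sigma(\underline{A})/T_{r-1}$, recovers an extension of $f$; hence $0\in[0_1,\dots,0_r]$ and $\Gamma=0$ gives $[0_1,\dots,0_r]=0$.

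The main obstacle I anticipate is the careful bookkeeping in \ref{Ll.ii}$\Rightarrow$\ref{Ll.iii}: one must verify that the extension of $p\circ\omega_r$-nullhomotopy over the mapping cone genuinely \emph{descends} to a well-defined map on the quotients $T_0\Sigma(\underline{A})/T_{r-1}\Sigma(\underline{A})$, and that this descended map is honestly a one-sided inverse to $\bar k$ on the nose (or up to homotopy), which requires knowing how $T_{r-1}\Sigma(\underline{A})$ sits inside the equivalence \eqref{equ} of Porter. Concretely, the cell $C(\Sigma^{r-1}\Lambda(\underline{A}))$ glued to $T_1\Sigma(\underline{A})$ via $\omega_r$ lies entirely in $T_0\Sigma(\underline{A})\setminus T_{r-1}\Sigma(\underline{A})$ (its interior is the top cell of the torus-like complex), so quotienting by $T_{r-1}$ commutes with forming the mapping cone up to homotopy; once this is established the retraction is immediate. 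The other delicate point, in \ref{Ll.i}$\Rightarrow$\ref{Ll.ii}, is simply to be explicit that $Q_1$ is an admissible test space and that $p\circ\iota$ restricts trivially on the wedge — both routine once spelled out, but worth stating since \ref{Ll.i} quantifies over \emph{all} $X$ and the content of the lemma is exactly that this universal quantifier can be replaced by a single space-level statement.
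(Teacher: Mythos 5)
Your argument is correct and is evidently the intended one: the paper states this lemma without proof (``we can easily show''), and your cycle \ref{Ll.i}$\Rightarrow$\ref{Ll.ii}$\Rightarrow$\ref{Ll.iii}$\Rightarrow$\ref{Ll.i} --- testing \ref{Ll.i} on the universal example $X=T_1\Sigma(\underline{A})/T_{r-1}\Sigma(\underline{A})$ with $f=p$, using \eqref{equ} (as an equivalence under $T_1\Sigma(\underline{A})$, so that it survives collapsing $T_{r-1}\Sigma(\underline{A})$) to identify $T_0\Sigma(\underline{A})/T_{r-1}\Sigma(\underline{A})$ with the mapping cone of $p\omega_r$, and factoring an arbitrary extension of the trivial wedge map through the quotient --- is exactly the natural route, with the two delicate points (descent to the quotient, and $\rho\bar{k}\simeq\mathrm{id}$) correctly identified and resolvable by the cofibration property. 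One cosmetic caveat: your preliminary remark that $[0_1,\dots,0_r]$ is a single coset of $\Gamma$ is not what Corollary~\ref{CC}\ref{CC.1} provides for $r\ge 4$ (it gives only the inclusion $\beta+\Gamma\subseteq[0_1,\dots,0_r]$), but this is harmless since your proof of \ref{Ll.iii}$\Rightarrow$\ref{Ll.i} shows directly that \emph{every} element $\omega_r(f)$ vanishes.
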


\begin{proposition}\label{wupro} If $r\ge 4$ and $\underline{\mathbb{S}}=(\mathbb{S}^{m_1-1},\dots,\mathbb{S}^{m_r-1})$ with $m_i\ge 1$ for $i=1,\dots,r$
then the quotient map  $T_1\Sigma(\underline{\mathbb{S}})\to T_1\Sigma(\underline{\mathbb{S}})\mathbin{/}T_{r-1}\Sigma(\underline{\mathbb{S}})$ leads
to a non-trivial element of the $r$\textsuperscript{th} order generalized Whitehead product $[0_1,\dots,0_r]$.
\end{proposition}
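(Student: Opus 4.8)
The plan is to exhibit a concrete space $X$ together with trivial maps $0_i:\mathbb{S}^{m_i-1}\to X$ for which $[0_1,\dots,0_r]$ contains a nonzero element, and the natural candidate is $X=T_1\Sigma(\underline{\mathbb{S}})\mathbin{/}T_{r-1}\Sigma(\underline{\mathbb{S}})$ itself, with each $0_i$ being the composite $\mathbb{S}^{m_i}=\Sigma\mathbb{S}^{m_i-1}\hookrightarrow T_1\Sigma(\underline{\mathbb{S}})\to X$. These composites are indeed null-homotopic because the wedge $T_{r-1}\Sigma(\underline{\mathbb{S}})$ is collapsed; equivalently, the extension $f:T_{r-1}\Sigma(\underline{\mathbb{S}})\to X$ of $0_1\vee\cdots\vee 0_r$ is the constant map, so by Corollary~\ref{cor.fi.zero} (and the hypothesis $r\ge4$, which forces every lower product $[0_{m_1},\dots,0_{m_k}]$ with $k\le r-1$ to contain zero since those are again of the form ``collapse a wedge'') the set $[0_1,\dots,0_r]$ is nonempty.

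First I would record that $[0_1,\dots,0_r]\ne\emptyset$ by checking the vanishing of all lower products: by Corollary~\ref{cor.fi.zero} each $[0_{m_1},\dots,0_{m_k}]$ contains $0$ as soon as it is nonempty, and an induction on $k$ (starting from the classical $[0,0]=0$ for spheres, which holds as a two-fold generalized Whitehead product of null maps) shows nonemptiness throughout. Then I would invoke Lemma~\ref{Ll}: the set $[0_1,\dots,0_r]=0$ modulo indeterminacy for \emph{all} $X$ precisely when the composite $p\,\omega_r:\Sigma^{r-1}\Lambda(\underline{\mathbb{S}})\to T_1\Sigma(\underline{\mathbb{S}})\to T_1\Sigma(\underline{\mathbb{S}})\mathbin{/}T_{r-1}\Sigma(\underline{\mathbb{S}})$ is trivial, equivalently when $\bar k$ of Lemma~\ref{Ll}\ref{Ll.iii} admits a retraction. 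So it suffices to produce, for some admissible $\underline{\mathbb{S}}$, a situation where $p\,\omega_r$ is \emph{not} null; then taking $X$ to be the quotient and the $0_i$ as above, the element $\omega_r(f)\in[0_1,\dots,0_r]$ represented by $p\,\omega_r$ (via the constant extension $f$ on $T_{r-1}$ followed by any extension to $T_1$, here simply $p$ composed with an $\omega_r$-extension) is nonzero.

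Thus the heart of the matter is to show $p\,\omega_r\not\simeq 0$ for $r\ge4$. I would argue via homology. The homotopy cofibration $T_{r-1}\Sigma(\underline{\mathbb{S}})\hookrightarrow T_0\Sigma(\underline{\mathbb{S}})$ and Porter's decomposition~\eqref{equ} (together with Theorem~\ref{teo.porter.2.6}) let one compute $H_*$ of $T_1\Sigma(\underline{\mathbb{S}})\mathbin{/}T_{r-1}\Sigma(\underline{\mathbb{S}})$: the fat wedge $T_1$ has, relative to the wedge $T_{r-1}$, exactly one cell in each dimension $\sum_{i\ne j}m_i$ (attached by the sub-Whitehead maps $\omega_r^{(j)}$), while $T_0$ additionally has the single top cell of dimension $m=\sum_i m_i$ attached by $\omega_r$. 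If $p\,\omega_r$ were null, then by Lemma~\ref{Ll}\ref{Ll.iii} the quotient $T_0\Sigma(\underline{\mathbb{S}})\mathbin{/}T_{r-1}\Sigma(\underline{\mathbb{S}})$ would split off $T_1\Sigma(\underline{\mathbb{S}})\mathbin{/}T_{r-1}\Sigma(\underline{\mathbb{S}})$ as a retract; I would derive a contradiction by exhibiting a nonzero cup product (or Steenrod operation / linking in the cohomology ring) in $T_0\Sigma(\underline{\mathbb{S}})\mathbin{/}T_{r-1}\Sigma(\underline{\mathbb{S}})$ that is not supported on the retract. Concretely, the cohomology of $T_0\Sigma(\underline{\mathbb{S}})=\prod_i\mathbb{S}^{m_i}$ modulo the wedge is an exterior-algebra quotient in which the class dual to the top cell is the product $x_1\cdots x_r$ of the fundamental classes; this product is nonzero, whereas on the retract $T_1\Sigma(\underline{\mathbb{S}})\mathbin{/}T_{r-1}$ every $(r-1)$-fold (hence every $r$-fold) product of the $x_i$ vanishes for dimension/degree reasons, since $T_1$ contains no cell carrying $x_1\cdots x_r$. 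This contradicts the existence of the retraction $\rho$, so $p\,\omega_r\not\simeq0$, and the proof is complete.

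The main obstacle I anticipate is making the cohomology bookkeeping of the relative CW structures $T_1\mathbin{/}T_{r-1}\subseteq T_0\mathbin{/}T_{r-1}$ genuinely rigorous — in particular pinning down the ring structure on $H^*(T_0\Sigma(\underline{\mathbb{S}})\mathbin{/}T_{r-1}\Sigma(\underline{\mathbb{S}}))$ and verifying that the class $x_1\cdots x_r$ indeed survives and cannot factor through $H^*$ of the fat-wedge quotient. One must also keep track of the degenerate low-dimensional cases $m_i=1$ (where $\Sigma\mathbb{S}^{0}=\mathbb{S}^1$ and some smash factors are circles), but these are handled uniformly by the formulas of Section~\ref{sec.HOGWP}. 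The hypothesis $r\ge4$ is what guarantees room for the argument: for $r=3$ Corollary~\ref{CC}\ref{CC.3} together with the computation just before Lemma~\ref{Ll} forces $[0_1,0_2,0_3]=0$, so the phenomenon genuinely begins at $r=4$.
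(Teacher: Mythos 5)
Your overall strategy coincides with the paper's: take $X=T_1\Sigma(\underline{\mathbb{S}})\mathbin{/}T_{r-1}\Sigma(\underline{\mathbb{S}})$, reduce via Lemma~\ref{Ll} to the non-existence of a retraction $\rho$ for $\bar k$, and derive a contradiction from cup products in $T_0\Sigma(\underline{\mathbb{S}})\mathbin{/}T_{r-1}\Sigma(\underline{\mathbb{S}})$. But the cup-product step, as you have written it, does not close. The fundamental classes $x_i\in H^{m_i}(T_0\Sigma(\underline{\mathbb{S}}))$ do not lift to $H^\ast(T_0\Sigma(\underline{\mathbb{S}})\mathbin{/}T_{r-1}\Sigma(\underline{\mathbb{S}}))$, let alone to $H^\ast(T_1\Sigma(\underline{\mathbb{S}})\mathbin{/}T_{r-1}\Sigma(\underline{\mathbb{S}}))$: collapsing the wedge kills precisely the cells $e^{m_i}$. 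So the assertion that ``on the retract every $(r-1)$-fold (hence every $r$-fold) product of the $x_i$ vanishes'' is not a statement about the ring of the retract at all (and in $H^\ast(T_1\Sigma(\underline{\mathbb{S}}))$ itself it is false for $(r-1)$-fold products --- only the full product $x_1\cdots x_r$ dies there). Moreover, a nonzero class of the big quotient that is ``not supported on the retract'' is not by itself a contradiction: a retraction only forces $\bar k^\ast\rho^\ast=\mathrm{id}$, so you must exhibit a product of two classes \emph{each lying in the image of} $\rho^\ast$ which is nonzero upstairs while the product of their preimages must vanish downstairs.

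This is exactly where $r\ge 4$ enters, and it is the step your sketch omits. One bipartitions $\{1,\dots,r\}$ into two blocks of size at least two, say $\{1,2\}$ and $\{3,\dots,r\}$, and takes $\alpha,\beta$ to be the classes dual to the cells $e^{m_1}\times e^{m_2}$ and $e^{m_3}\times\dots\times e^{m_r}$: each such cell has at least two non-basepoint factors (so it survives collapsing the wedge) and at least one basepoint factor (so it lies in the fat wedge), hence $\alpha,\beta$ are genuine nonzero classes of $H^\ast(T_1\Sigma(\underline{\mathbb{S}})\mathbin{/}T_{r-1}\Sigma(\underline{\mathbb{S}}))$ with $\alpha\smile\beta=0$ because the top group of that quotient vanishes. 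To contradict this one must also identify $\rho^\ast(\alpha)$ and $\rho^\ast(\beta)$ --- injectivity of $\rho^\ast$ alone does not rule out their product being zero. The paper gets this from the long exact sequence of the triple $T_{r-1}\Sigma(\underline{\mathbb{S}})\subseteq T_1\Sigma(\underline{\mathbb{S}})\subseteq T_0\Sigma(\underline{\mathbb{S}})$: $\bar k^\ast$ is an isomorphism in degrees below $m_1+\dots+m_r-1$, so $\rho^\ast=(\bar k^\ast)^{-1}$ there, whence $q^\ast\rho^\ast(\alpha)=x_1x_2$ and $q^\ast\rho^\ast(\beta)=x_3\cdots x_r$ with $q^\ast$ a monomorphism by the K\"unneth splitting, giving $\rho^\ast(\alpha)\smile\rho^\ast(\beta)\neq 0$. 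Your attribution of the hypothesis $r\ge4$ to the triviality of $[0_1,0_2,0_3]$ identifies the right boundary but not the mechanism: for $r=3$ every bipartition has a singleton block whose class dies in the quotient, so no such pair $\alpha,\beta$ exists.
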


\begin{proof}\footnote{We are deeply grateful to Jie Wu for his idea on the presented proof.}
Suppose that the quotient map $T_1\Sigma(\underline{\mathbb{S}})\to T_1\Sigma(\underline{\mathbb{S}})\mathbin{/}T_{r-1}\Sigma(\underline{\mathbb{S}})$
yields a trivial element of the $r$\textsuperscript{th} order generalized Whitehead product $[0_1,\dots,0_r]$ for $r\ge 4$.
Then, by Lemma~\ref{Ll}\ref{Ll.iii},  there is a retraction \[  T_0\Sigma(\underline{\mathbb{S}}) \mathbin{/} T_{r-1}\Sigma (\underline{\mathbb{S}})
\xrightarrow{\rho} T_1\Sigma(\underline{\mathbb{S}})
\mathbin{/} T_{r-1}\Sigma (\underline{\mathbb{S}}) \] for the map \[  T_1\Sigma(\underline{\mathbb{S}}) \mathbin{/} T_{r-1}\Sigma (\underline{\mathbb{S}}) \xrightarrow{\bar k}
T_0\Sigma(\underline{A}) \mathbin{/} T_{r-1}\Sigma (\underline{A}) \] induced by the inclusion
$k: T_1\Sigma(\underline{\mathbb{S}}) \hookrightarrow T_0\Sigma(\underline{\mathbb{S}})$.

This leads to homomorphisms of cohomology algebras
\[\rho^\ast : H^\ast(T_1\Sigma(\underline{\mathbb{S}}) \mathbin{/} T_{r-1}\Sigma (\underline{\mathbb{S}}))\to H^\ast(T_0\Sigma(\underline{\mathbb{S}}) \mathbin{/} T_{r-1}\Sigma (\underline{\mathbb{S}}))\] and \[ \bar{k}^\ast : H^\ast(T_0\Sigma(\underline{\mathbb{S}}) \mathbin{/} T_{r-1}\Sigma (\underline{\mathbb{S}}))\to
H^\ast(T_1\Sigma(\underline{\mathbb{S}}) \mathbin{/} T_{r-1}\Sigma (\underline{\mathbb{S}}))\]
over integers with $\bar{k}^\ast\rho^\ast=\mathrm{id}_{H^\ast(T_1\Sigma(\underline{\mathbb{S}})\mathbin{/} T_{r-1}\Sigma (\underline{\mathbb{S}}))}$. In particular, $\rho^\ast$ is monomorphism. The long exact sequence for the triple $T_{r-1}\Sigma (\underline{\mathbb{S}})\subseteq T_1\Sigma(\underline{\mathbb{S}})\subseteq T_0\Sigma(\underline{\mathbb{S}})$ yields homomorphisms \begin{equation}\bar{k}^\ast: H^n(T_0\Sigma(\underline{\mathbb{S}}) \mathbin{/} T_{r-1}\Sigma (\underline{\mathbb{S}}))\to  H^n(T_1\Sigma(\underline{\mathbb{S}}) \mathbin{/} T_{r-1}\Sigma (\underline{\mathbb{S}}))\label{Eq}
\end{equation}which is an isomorphism for $n<m_1+\dots+m_r-1$, and a monomorphism for $n<m_1+\dots+m_r$.

Let now $i :T_{r-1}\Sigma (\underline{\mathbb{S}})\hookrightarrow T_0\Sigma(\underline{\mathbb{S}})$ and $q: T_0\Sigma(\underline{\mathbb{S}})\to T_0\Sigma(\underline{\mathbb{S}}) \mathbin{/} T_{r-1}\Sigma (\underline{\mathbb{S}})$
be the inclusion and quotient maps, respectively. Thus, the K\"unneth Theorem implies that the induced cohomology maps  $i^\ast : H^n(T_0\Sigma (\underline{\mathbb{S}}))\to H^n(T_{r-1}\Sigma(\underline{\mathbb{S}}))$
are splitting epimorphisms for $n\geq 0$, and consequently
\[q^\ast : H^\ast(T_0\Sigma(\underline{\mathbb{S}}) \mathbin{/} T_{r-1}\Sigma (\underline{\mathbb{S}}))\to H^\ast(T_0\Sigma(\underline{\mathbb{S}}))\]
is a monomorphism of cohomology algebras.

Now, consider the non-trivial elements $\alpha\in H^{m_1+m_2}(T_1\Sigma(\underline{\mathbb{S}})\mathbin{/} T_{r-1}\Sigma (\underline{\mathbb{S}}))$ and $\beta\in H^{m_3+\dots+m_r}(T_1\Sigma(\underline{\mathbb{S}}) \mathbin{/} T_{r-1}\Sigma (\underline{\mathbb{S}}))$ determined
by cocycles associated to the cells $e^{m_1}\times e^{m_2}$ and $e^{m_3}\times\dots\times e^{m_r}$, respectively. Then, the cup-product $\alpha\smile\beta\in H^{m_1+\dots+m_r}(T_1\Sigma(\underline{\mathbb{S}}) \mathbin{/} T_{r-1}\Sigma (\underline{\mathbb{S}}))=0$ and consequently, $\rho^\ast(\alpha\smile\beta)=\rho^\ast(\alpha)\smile\rho^\ast(\beta)=0$.

On the other hand, since $\rho^\ast$ and $q^\ast$ are monomorphisms, the elements $q^\ast\rho^\ast(\alpha)$ and $q^\ast\rho^\ast(\beta)$ are non-trivial
as well. Further, in view of (\ref{Eq}), the map \[\bar{k}^\ast: H^n(T_0\Sigma(\underline{\mathbb{S}}) \mathbin{/} T_{r-1}\Sigma (\underline{\mathbb{S}}))\xrightarrow{\cong} H^n(T_1\Sigma(\underline{\mathbb{S}}) \mathbin{/} T_{r-1}\Sigma (\underline{\mathbb{S}}))\] is an isomorphism
with the inverse \[\rho^\ast=(\bar{k}^\ast)^{-1} :  H^n(T_1\Sigma(\underline{\mathbb{S}}) \mathbin{/} T_{r-1}\Sigma (\underline{\mathbb{S}}))\xrightarrow{\cong}
H^n(T_0\Sigma(\underline{\mathbb{S}}) \mathbin{/} T_{r-1}\Sigma (\underline{\mathbb{S}}))\] for $n=m_1+m_2$ and $n=m_3+\dots+m_r$.

Then, the cohomology algebra structure
\[H^\ast(T_0\Sigma(\underline{\mathbb{S}}))\xrightarrow{\cong}H^\ast(\mathbb{S}^{m_1})\otimes\dots\otimes H^\ast(\mathbb{S}^{m_r})\] and $r\ge 4$
lead to the non-trivial cup-product \[q^\ast\rho^\ast(\alpha)\smile q^\ast\rho^\ast(\beta)=q^\ast(\rho^\ast(\alpha)\smile \rho^\ast(\beta))\in H^{m_1+\dots+m_r}(T_0\Sigma(\underline{\mathbb{S}})).\] Consequently, $\rho^\ast(\alpha)\smile\rho^\ast(\beta)\in H^{m_1+\dots+m_r}(T_0\Sigma(\underline{\mathbb{S}}) \mathbin{/} T_{r-1}\Sigma (\underline{\mathbb{S}}))$ is non-trivial as well. This contradiction completes the proof.
\end{proof}

\begin{remark}
The generalized Whitehead map $\omega_r:\Sigma^{r-1}\Lambda(\underline{\mathbb{S}})\to T_1\Sigma(\underline{\mathbb{S}})$ is non-trivial
for any $r\geq 2$. Indeed, if $\omega_r$ is trivial then it factors through the cone $C\Sigma^{r-1}\Lambda(\underline{\mathbb{S}})$.
Hence, in view of \eqref{equ}, there exists a retraction $\rho :  T_0\Sigma(\underline{\mathbb{S}})\to T_1\Sigma(\underline{\mathbb{S}})$
for the inclusion map $k: T_1\Sigma(\underline{\mathbb{S}}) \hookrightarrow T_0\Sigma(\underline{\mathbb{S}})$.

Let $\alpha\in H^{m_1}(T_1\Sigma(\underline{\mathbb{S}}))$ and $\beta\in H^{m_2+\dots+m_r}(T_1\Sigma(\underline{\mathbb{S}}))$ be non-trivial
elements determined by cocycles associated to the cells $e^{m_1}$ and $e^{m_2}\times\dots\times e^{m_r}$, respectively.
Thus, \[\alpha\smile\beta\in H^{m_1+\dots+m_r}(T_1\Sigma(\underline{\mathbb{S}}))=0.\] But,
$\rho^\ast(\alpha),\rho^\ast(\beta)\in H^{\ast}(T_0\Sigma(\underline{\mathbb{S}}))$ satisfy
\[\rho^\ast(\alpha)\smile\rho^\ast(\beta)=\rho^\ast(\alpha\smile\beta)\neq 0 \] and since $\rho^\ast$ is monomorphism, it holds
$\alpha\smile\beta\neq 0$. This contradiction guarantees that $\omega_r$ is non-trivial.
\end{remark}

\medskip

Let $\mathbb{R}$ and $\mathbb{C}$ be the fields of real and complex numbers, respectively and $\mathbb{H}$ the skew $\mathbb{R}$-algebra of quaternions.
Denote by $\mathbb{F}P^n$ the $n$-projective space over $\mathbb{F}=\mathbb{R}$, $\mathbb{C}$ or $\mathbb{H}$, put $d=\dim_\mathbb{R}\mathbb{F}$ and set
$i_{n\mathbb{F}} : \mathbb{S}^d\hookrightarrow\mathbb{F}P^n$ for the inclusion map. Let $\gamma_{n\mathbb{F}}:\mathbb{S}^{(n+1)d-1}\to \mathbb{F}P^n$ be
the canonical quotient map. In view of \cite[Corollary~(7.4)]{barcus} and \cite[(4.1--3)]{BJS}, we obtain a key formula:

\begin{lemma}\label{Y}
Let $h_0f\in\pi_k(\mathbb{S}^{2n-1})$ be the $0$\textsuperscript{th} Hopf--Hilton invariant for $f\in\pi_k(\mathbb{S}^n)$. Then:
\[ [\gamma_{n\mathbb{R}} f,i_{n\mathbb{R}}]
=\begin{cases} 0, & \text{for odd $n$,}\\[1ex]
(-1)^k\gamma_{n\mathbb{R}}(-2f+[\iota_n,\iota_n]\circ h_0f), & \text{for even $n$.}
\end{cases} \]
\end{lemma}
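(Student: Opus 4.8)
The plan is to reduce the statement to the classical Whitehead product formula for a composite of the form $[\gamma f, i]$ on a projective space, and then to separate the odd and even cases.

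First I would recall the structure of $\mathbb{R}P^n$: we have the inclusion $i_{n\mathbb{R}}:\mathbb{S}^1\hookrightarrow\mathbb{R}P^n$ (so $d=1$) and the attaching map $\gamma_{n\mathbb{R}}:\mathbb{S}^n\to\mathbb{R}P^n$ of the top cell, and $\mathbb{R}P^n=\mathbb{R}P^{n-1}\cup_{\gamma_{(n-1)\mathbb{R}}}e^n$. For $f\in\pi_k(\mathbb{S}^n)$ I want to compute $[\gamma_{n\mathbb{R}}f,i_{n\mathbb{R}}]\in\pi_{k+1-1}(\mathbb{R}P^n)=\pi_{k}(\mathbb{R}P^n)$ (indices: $[\gamma_{n\mathbb{R}} f, i_{n\mathbb{R}}]$ lives in $\pi_{k+1-1}$ since $\gamma_{n\mathbb{R}} f:\mathbb{S}^k\to\mathbb{R}P^n$ and $i_{n\mathbb{R}}:\mathbb{S}^1\to\mathbb{R}P^n$, giving $\pi_{k+1-1}=\pi_k$). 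The key input is the bilinearity/composition behaviour of the Whitehead product: $[\gamma_{n\mathbb{R}}f,i_{n\mathbb{R}}]=[\gamma_{n\mathbb{R}},i_{n\mathbb{R}}]\circ(f\wedge\mathrm{id})$-type expansion, more precisely $[\gamma_{n\mathbb{R}}f,i_{n\mathbb{R}}]=[\gamma_{n\mathbb{R}},i_{n\mathbb{R}}]\circ\Sigma^{?}f$ up to the Hopf--Hilton correction terms coming from the fact that $f$ need not be a suspension. This is exactly where \cite[Corollary~(7.4)]{barcus} enters: Barcus--Barratt give the precise formula $[\alpha\circ f,\beta]=[\alpha,\beta]\circ\Sigma f + (\text{Hopf--Hilton correction involving }h_0f)$, and for the self-Whitehead product on a sphere the correction is measured by $[\iota_n,\iota_n]\circ h_0 f$.

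Then I would bring in \cite[(4.1--3)]{BJS}, which computes $[\gamma_{n\mathbb{R}},i_{n\mathbb{R}}]$ itself — that is, the Whitehead product of the top-cell attaching map with the bottom-cell inclusion on $\mathbb{R}P^n$. The classical fact (going back to the relation between $\mathbb{R}P^n$ and the mod-$2$ attaching structure, and the formula $[\iota_n,\iota_n]=(1+(-1)^n)\iota_n\circ(\text{something})$ flavour on the sphere) is: on $\mathbb{R}P^n$ one has $[\gamma_{n\mathbb{R}},i_{n\mathbb{R}}]=\gamma_{n\mathbb{R}}\circ(1+(-1)^n)(\dots)$, so it vanishes for odd $n$ and equals (up to sign, and absorbing the degree-$2$ map) $\gamma_{n\mathbb{R}}\circ(-2\iota_n)$ for even $n$. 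Substituting into the Barcus formula, the composite $[\gamma_{n\mathbb{R}},i_{n\mathbb{R}}]\circ\Sigma f$ becomes $\gamma_{n\mathbb{R}}\circ(-2\iota_n)\circ\Sigma f=\gamma_{n\mathbb{R}}\circ(-2f)$ in the even case (and $0$ in the odd case), while the Hopf--Hilton correction contributes $\gamma_{n\mathbb{R}}\circ[\iota_n,\iota_n]\circ h_0f$; the overall sign $(-1)^k$ is the standard sign appearing in $[\alpha\circ f,\beta]$ when one commutes the composition past the Whitehead bracket (it comes from $\Sigma(\gamma_{n\mathbb{R}}f)$ versus $\gamma_{n\mathbb{R}}\circ\Sigma f$ reindexing, i.e.\ the $(-1)^{\deg}$ in the graded Jacobi/anticommutativity relations). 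Assembling these gives exactly the two-case formula in the statement.

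The main obstacle, and the step requiring the most care, is tracking the precise form of the correction term and the sign. Barcus--Barratt's formula has several summands (involving all the Hopf--Hilton invariants $h_i f$, composed with iterated Whitehead products of $\iota_n$), and I must argue that on $\mathbb{R}P^n$ all but the $h_0$ term die — this uses that higher Whitehead products $[\dots[\iota_n,\iota_n],\iota_n\dots]$ become trivial after composing with $\gamma_{n\mathbb{R}}$ because $\gamma_{n\mathbb{R}}$ kills the relevant portion of $\pi_*(\mathbb{S}^n)$ (concretely, $i_{n\mathbb{R}}$ is $2$-torsion-ish so only the quadratic-in-$f$, i.e.\ $h_0$, term survives), which is the content of \cite[(4.1--3)]{BJS}. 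Once that vanishing is in hand, the rest is bookkeeping. I would also double-check the parity convention ($\mathbb{S}^n$ versus $\mathbb{S}^{n-1}$, hence ``odd $n$'' vs ``even $n$'') against the fact that $\gamma_{n\mathbb{R}}:\mathbb{S}^n\to\mathbb{R}P^n$ attaches the $n$-cell, so the relevant self-bracket is $[\iota_n,\iota_n]$ and the Euler-characteristic-type coefficient $1+(-1)^n$ is what produces the dichotomy.
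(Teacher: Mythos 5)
The paper offers no argument for this lemma: it is quoted essentially verbatim from Barratt--James--Stein (the real case, formula (4.1), of \cite[(4.1--3)]{BJS}), with \cite[Corollary~(7.4)]{barcus} named as the underlying machinery. Your plan reconstructs exactly that derivation --- expand $[\gamma_{n\mathbb{R}}f,i_{n\mathbb{R}}]$ by the Barcus--Barratt composition formula with Hopf--Hilton corrections, insert the value of $[\gamma_{n\mathbb{R}},i_{n\mathbb{R}}]$, and let the parity of $n$ (equivalently, the degree $(-1)^{n+1}$ of the antipodal deck transformation) produce the dichotomy --- so the choice of ingredients and the overall shape are right, and the odd case goes through as you describe. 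Two small slips: since $i_{n\mathbb{R}}$ is defined on $\mathbb{S}^1$, the leading term is $[\gamma_{n\mathbb{R}},i_{n\mathbb{R}}]\circ f$ with no suspension applied to $f$; and the sign $(-1)^k$ arises from transposing to $[i_{n\mathbb{R}},\gamma_{n\mathbb{R}}f]$ and expanding in the second variable, not from a suspension reindexing.

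The genuine gap is the step ``$\gamma_{n\mathbb{R}}\circ(-2\iota_n)\circ f=\gamma_{n\mathbb{R}}\circ(-2f)$'' in the even case. That is the left distributive law for composition, which fails for non-suspension $f$, and its failure is measured by precisely the invariant you are trying to track: $(-2\iota_n)\circ f=-2f+3\,[\iota_n,\iota_n]\circ h_0f+(\text{terms in }h_jf,\ j\ge1)$, the coefficient $3$ being the binomial coefficient $\binom{-2}{2}$. Meanwhile the Barcus--Barratt correction term that you do retain contributes $[[\gamma_{n\mathbb{R}},i_{n\mathbb{R}}],\gamma_{n\mathbb{R}}]\circ h_0f=[-2\gamma_{n\mathbb{R}},\gamma_{n\mathbb{R}}]\circ h_0f=-2\,\gamma_{n\mathbb{R}}\bigl([\iota_n,\iota_n]\circ h_0f\bigr)$, not $+\gamma_{n\mathbb{R}}\bigl([\iota_n,\iota_n]\circ h_0f\bigr)$. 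The coefficient $1$ in the statement arises only from the cancellation $3-2=1$ between these two sources of $h_0f$-terms; keeping one and dropping the other, as your sketch does, lands on the wrong multiple of $[\iota_n,\iota_n]\circ h_0f$. Relatedly, the vanishing of the terms involving $h_jf$ for $j\ge1$ requires its own verification (this is part of what \cite{BJS} actually check); the heuristic that ``$i_{n\mathbb{R}}$ is $2$-torsion-ish so only the $h_0$ term survives'' is not an argument --- the intermediate weight-two contribution $-2\,\gamma_{n\mathbb{R}}[\iota_n,\iota_n]\circ h_0f$ is itself an even multiple and nonetheless matters.
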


Now, let $f: T_1\Sigma(\underline{\mathbb{S}})\to \mathbb{S}^2$  for $\underline{\mathbb{S}}=(\mathbb{S}^{m_1-1},\dots,\mathbb{S}^{m_r-1})$.
Recall that by \cite[Satz]{baues}, we have $\omega_r(f)=0$ provided $m_1+\cdots+m_r\neq 4$.
Further, Lemma~\ref{Y} implies that $[\pi_{r-1}(\mathbb{R}P^2),i_{2\mathbb{R}}]\neq 0$ in general.
This yields the following result.

\begin{proposition} Let $f: T_1\Sigma(\underline{\mathbb{S}})\to \mathbb{R}P^2$ for $\underline{\mathbb{S}}=(\mathbb{S}^{m_1-1},\dots ,\mathbb{S}^{m_r-1})$.
\begin{enumerate}[label=\textup{(\roman*)}]
\item If $m_i\geq 2$ for $i=1,\dots,r$ with $r\geq 2$, then $\omega_r(f)=0$ provided $m_1+\cdots+m_r\neq 4$.

\item $[\pi_{r-1}(\mathbb{R}P^2),i_{2\mathbb{R}}]\subseteq [f_1,\dots,f_r]$ for $f_i : \mathbb{S}^1\to \mathbb{R}P^2$
with $i=1,\dots,r$ and $f_{i_0}=i_{2\mathbb{R}}$ for at least one $1\le i_0\le r$.
In particular, $2\pi_2(\mathbb{R}P^2) =[0,0,i_{2\mathbb{R}}]=[0,i_{2\mathbb{R}}, i_{2\mathbb{R}}]=[i_{2\mathbb{R}},i_{2\mathbb{R}}, i_{2\mathbb{R}}]$.
\end{enumerate}
\end{proposition}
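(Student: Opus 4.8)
The plan is to prove part~(i) by a straightforward appeal to the cited result of Baues, and part~(ii) by combining the naturality and additivity machinery of Sections~\ref{sec.HOGWP} and~\ref{sec.main} with Lemma~\ref{Y}. For part~(i), observe that $\mathbb{R}P^2$ has $\mathbb{S}^2$ as its universal cover, and more to the point the map $f$ factors through the $2$-skeleton considerations that make $\omega_r(f)$ depend only on the homotopy type of $\mathbb{R}P^2$ through dimension $m_1+\cdots+m_r$. Since each $m_i\ge 2$, the source $\Sigma^{r-1}\Lambda(\underline{\mathbb{S}})$ is a sphere of dimension $m_1+\cdots+m_r-1\ge 2r-1\ge 3$, so the relevant homotopy group is a homotopy group of $\mathbb{R}P^2$ in degree $\ge 3$, which agrees with $\pi_*(\mathbb{S}^2)$. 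Then I would invoke \cite[Satz]{baues} exactly as in the paragraph preceding the statement: $\omega_r(f)=0$ whenever $m_1+\cdots+m_r\neq 4$. The only point needing care is to phrase the reduction to $\mathbb{S}^2$ precisely, but since the statement only asserts triviality of $\omega_r(f)$ this is immediate once one knows $\pi_n(\mathbb{R}P^2)\cong\pi_n(\mathbb{S}^2)$ for $n\ge 2$.

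For part~(ii), the key input is Lemma~\ref{Y} with $\mathbb{F}=\mathbb{R}$ and $n=2$: for any $\alpha\in\pi_{r-1}(\mathbb{R}P^2)$ written as $\gamma_{2\mathbb{R}}\circ(\text{something})$, or more directly using the description $[\gamma_{2\mathbb{R}}g,i_{2\mathbb{R}}]=(-1)^k\gamma_{2\mathbb{R}}(-2g+[\iota_2,\iota_2]\circ h_0 g)$, one sees that $[\pi_{r-1}(\mathbb{R}P^2),i_{2\mathbb{R}}]$ is in general non-trivial; this is the source of the claimed inclusion. Now suppose $f_{i_0}=i_{2\mathbb{R}}$ for some $i_0$. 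Up to the permutation of Proposition~\ref{prop.permutation} we may take $i_0=r$. Since $\mathbb{S}^1=\Sigma\mathbb{S}^0$ is a suspension, Corollary~\ref{c1} and the additivity results of Section~\ref{sec.HOGWP} let us reduce to understanding $[0,\dots,0,i_{2\mathbb{R}}]$: indeed the lower products all vanish (each $[\iota_2,\iota_2]$-type obstruction living in the right degree is killed, and products involving a trivial map vanish by Corollary~\ref{cor.fi.zero}), so the set $[f_1,\dots,f_r]$ is non-empty. Then I would apply Corollary~\ref{CC}\ref{CC.1}: the indeterminacy subgroup contains the summand $(\Sigma^{r-1}\sigma_r)^*[\pi(\Sigma^{r-1}\Lambda^{(r)}(\underline{\mathbb{S}}),\mathbb{R}P^2),f_r]=(\Sigma^{r-1}\sigma_r)^*[\pi_{r-1}(\mathbb{R}P^2),i_{2\mathbb{R}}]$, and since $\Sigma^{r-1}\Lambda^{(r)}(\underline{\mathbb{S}})$ is again a sphere of the appropriate dimension, this identifies (after the homeomorphism $\sigma_r$) with $[\pi_{r-1}(\mathbb{R}P^2),i_{2\mathbb{R}}]$ inside $\pi_{r+1}(\mathbb{R}P^2)$ or the relevant group. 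Because $0\in[f_1,\dots,f_r]$ (Corollary~\ref{cor.fi.zero}), the coset is the subgroup itself, giving the desired inclusion.

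For the ``in particular'' equalities, I would specialize to $r=3$, $m_1=m_2=m_3=2$ so that $m_1+m_2+m_3=6\neq 4$ and the source and targets of all the relevant maps are $2$-dimensional spheres, hence $\pi_2(\mathbb{R}P^2)\cong\mathbb{Z}$ generated by $i_{2\mathbb{R}}$ via the covering $\mathbb{S}^2\to\mathbb{R}P^2$ (degree-$2$ identification on $\pi_2$), and compute directly. For $[0,0,i_{2\mathbb{R}}]$: by part~(ii) the indeterminacy is $[\pi_2(\mathbb{R}P^2),i_{2\mathbb{R}}]$, and Lemma~\ref{Y} with $n=2$, $g=\iota_2$ gives $[i_{2\mathbb{R}},i_{2\mathbb{R}}]=[\gamma_{2\mathbb{R}}\iota_?,i_{2\mathbb{R}}]$ evaluating to $\pm\gamma_{2\mathbb{R}}(-2\iota_2+[\iota_2,\iota_2]\circ h_0\iota_2)$; since $h_0\iota_2=0$ this is $\pm 2\gamma_{2\mathbb{R}}\iota_2$, i.e. $2$ times a generator, so $[\pi_2(\mathbb{R}P^2),i_{2\mathbb{R}}]=2\pi_2(\mathbb{R}P^2)$. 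Combined with $0\in[0,0,i_{2\mathbb{R}}]$ this yields $[0,0,i_{2\mathbb{R}}]=2\pi_2(\mathbb{R}P^2)$. The equalities $[0,i_{2\mathbb{R}},i_{2\mathbb{R}}]=[i_{2\mathbb{R}},i_{2\mathbb{R}},i_{2\mathbb{R}}]=2\pi_2(\mathbb{R}P^2)$ follow the same way once one checks these products are non-empty: the lower products $[0,i_{2\mathbb{R}}]$, $[i_{2\mathbb{R}},i_{2\mathbb{R}}]$, etc., must all contain $0$, and here $[i_{2\mathbb{R}},i_{2\mathbb{R}}]=2\pi_2(\mathbb{R}P^2)$ as a classical Whitehead product set but we need $0$ in the relevant higher-product sense --- actually what is needed for non-emptiness of the triple product is that the relevant pairwise generalized Whitehead products vanish, which holds since $\pi_3(\mathbb{R}P^2)\cong\pi_3(\mathbb{S}^2)\cong\mathbb{Z}$ is generated by $\eta$ and $[\iota_2,\iota_2]=2\eta$ pulls back appropriately; I would verify this using \cite[Satz]{baues} again (the relevant $\omega_2(f)$ vanishes since $m_i+m_j=4$ is precisely the exceptional case, so this needs a separate small argument) or by a direct check that $DJ$-type obstructions vanish modulo the covering $\mathbb{S}^2\to\mathbb{R}P^2$.

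\textbf{Anticipated main obstacle.} The hard part will be the non-emptiness verification for $[i_{2\mathbb{R}},i_{2\mathbb{R}},i_{2\mathbb{R}}]$ (and the mixed cases), because the relevant pairwise generalized Whitehead products land exactly in the dimension range $m_i+m_j=4$ excluded from \cite[Satz]{baues}, so Baues's theorem does not immediately give their vanishing; one must instead argue that the composite $[i_{2\mathbb{R}},i_{2\mathbb{R}}]=\gamma_{2\mathbb{R}}\circ[\iota_2,\iota_2]\circ(\cdots)$-type expression contains $0$ in the $r$\textsuperscript{th}-order sense, equivalently that $f_1\vee f_2\vee f_3$ extends over the fat wedge, which should follow from the structure of $\mathbb{R}P^2$ as a two-cell complex together with $[i_{2\mathbb{R}},i_{2\mathbb{R}}]\in 2\pi_2(\mathbb{R}P^2)$ being a multiple of the attaching data, but pinning this down cleanly is the delicate point.
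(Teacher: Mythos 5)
Your part (i) is essentially the paper's argument, but the justification you actually write down --- that $\pi_n(\mathbb{R}P^2)\cong\pi_n(\mathbb{S}^2)$ for $n\ge 2$ --- is not by itself enough: to invoke \cite[Satz]{baues} you must lift the \emph{map} $f$ itself to $\widetilde f: T_1\Sigma(\underline{\mathbb{S}})\to\mathbb{S}^2$ through the covering $\gamma_{2\mathbb{R}}$, and then conclude $\omega_r(f)=\gamma_{2\mathbb{R}}\circ\omega_r(\widetilde f)=0$. The lift exists because $m_i\ge 2$ makes the fat wedge $T_1\Sigma(\underline{\mathbb{S}})$ simply connected; this is the one sentence your sketch is missing, and it is exactly how the paper argues.

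The genuine gap is in part (ii), in establishing $0\in[f_1,\dots,f_r]$ (and in particular non-emptiness) when every $f_i=i_{2\mathbb{R}}$. You correctly flag this as the delicate point, but the route you propose --- Baues's theorem and pairwise products ``in the dimension range $m_i+m_j=4$'' --- rests on a dimension error: here $f_i:\mathbb{S}^1\to\mathbb{R}P^2$, so $A_i=\mathbb{S}^0$ and $m_i=1$, the lower $k$-fold products live in $\pi_{k-1}(\mathbb{R}P^2)$, and in particular the pairwise products are commutators in $\pi_1(\mathbb{R}P^2)=\mathbb{Z}/2$, hence trivially zero; $\pi_3(\mathbb{S}^2)$, $\eta$ and $[\iota_2,\iota_2]=2\eta$ never enter. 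The clean argument, which the paper uses and which you miss, is naturality (Theorem~\ref{teo.porter.glue}\ref{teo.porter.2.1}) applied to $i_{2\mathbb{R}}:\mathbb{S}^1\to\mathbb{R}P^2$: since $\mathbb{S}^1$ is a topological group, $0\in[\iota_1,\overset{\times r}{\dotsc{}},\iota_1]$ by Theorem~\ref{teo.porter.glue}\ref{teo.porter.2.5}, whence $0=(i_{2\mathbb{R}})_*(0)\in[i_{2\mathbb{R}},\overset{\times r}{\dotsc{}},i_{2\mathbb{R}}]$; the mixed cases follow the same way, or from Corollary~\ref{cor.fi.zero} once non-emptiness is known. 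With $0$ in the product, your appeal to Corollary~\ref{CC}\ref{CC.1} and your Lemma~\ref{Y} computation $[\pi_2(\mathbb{R}P^2),i_{2\mathbb{R}}]=2\pi_2(\mathbb{R}P^2)$ are correct and finish the proof as in the paper. Note also that in your ``in particular'' paragraph you set $m_1=m_2=m_3=2$; the correct values are $m_i=1$, and the triple product lands in $\pi_2(\mathbb{R}P^2)$ because $\Sigma^{2}(\mathbb{S}^0\wedge\mathbb{S}^0\wedge\mathbb{S}^0)=\mathbb{S}^2$, not because the $f_i$ have source $\mathbb{S}^2$.
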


\begin{proof} \textup{(i):} Since $m_i\geq 2$ for $i=1,\dots,r$, the space $T_1\Sigma(\underline{\mathbb{S}})$ is $1$-connected. Hence, any map
$f:T_1\Sigma(\underline{\mathbb{S}})\to \mathbb{R}P^2$ lifts to $\widetilde{f}: T_1\Sigma(\underline{\mathbb{S}})\to \mathbb{S}^2$ via the quotient
map $\gamma_{2\mathbb{R}}:\mathbb{S}^{2}\to \mathbb{R}P^2$. Consequently, in view of \cite[Satz]{baues},  $\omega_r(f)=0$.

\textup{(ii):} Certainly, in view of Corollary~\ref{cor.fi.zero}, it holds $0\in [f_1,\dots,f_r]$ if at least $f_{i_0}=0$ for some $1\le i_0\le r$.
Further, because the circle $\mathbb{S}^1$ is a topological group, we have $[\iota_1,\overset{\times r}{\dotsc{}},\iota_1]=0$. Hence, by
Theorem~\ref{teo.porter.glue}\ref{teo.porter.2.1}, $0=(i_{2\mathbb{R}})_\ast[\iota_1,\overset{\times r}{\dotsc{}},\iota_1]\in
[i_{2\mathbb{R}},\overset{\times r}{\dotsc{}}, i_{2\mathbb{R}}]$. Then, Corollary~\ref{CC} leads to the inclusion
$[\pi_{r-1}(\mathbb{R}P^2),i_{2\mathbb{R}}]\subseteq [f_1,\ldots, f_r]$ if $f_{i_0}=i_{2\mathbb{R}}$ for at least one $1\le i_0\le r$.

Finally, Lemma~\ref{Y} and \cite[Theorem~0.1]{hardie} lead to $[\pi_2(\mathbb{R}P^2),i_{2\mathbb{R}}]=\linebreak 2\pi_2(\mathbb{R}P^2)=
[0,0,i_{2\mathbb{R}}]=[0,i_{2\mathbb{R}},i_{2\mathbb{R}}]=[i_{2\mathbb{R}},i_{2\mathbb{R}},i_{2\mathbb{R}}]$ and the proof is complete.
\end{proof}

Notice that we can also state:
\begin{remark}
\begin{enumerate}[label=\textup{(\roman*)}]\thmenumhspace
\item $0\in [i_{n\mathbb{R}},\overset{\times r}{\dotsc{}}, i_{n\mathbb{R}}]\subseteq \pi_{r-1}(\mathbb{R}P^n)$ for $r\ge 2$. In particular,
$[i_{n\mathbb{R}},\overset{\times r}{\dotsc{}}, i_{n\mathbb{R}}]=0$ for $r\leq n$ and $2\pi_{r-1}(\mathbb{R}P^{2n})\subseteq [i_{2n\mathbb{R}},\overset{\times r}{\dotsc{}}, i_{2n\mathbb{R}}]\subseteq \pi_{r-1}(\mathbb{R}P^{2n})$ for $r<4n$.

\item  $[i_{r\mathbb{C}},\overset{\times (r+1)}{\dots\dotsc{}}, i_{r\mathbb{C}}]=(r+1)! \gamma_{r\mathbb{C}}$ for $r\ge 2$ (\cite[Corollary 2]{porter2}).

\item It is known that $[i_{1\mathbb{H}},i_{1\mathbb{H}}]=[\iota_4,\iota_4]=\pm(2\nu_4-\Sigma\nu')$ and
$[i_{n\mathbb{H}},i_{n\mathbb{H}}]=(i_{n\mathbb{H}})_{\ast}[\iota_4,\iota_4]=\pm i_{n\mathbb{H}}(\Sigma\nu')\neq 0$ for $n\ge 2$. Hence, $[i_{n\mathbb{H}},\overset{\times r}{\dotsc{}},i_{n\mathbb{H}}]=\emptyset$ for $r\ge 3$ and $n\ge 1$.
\end{enumerate}
\end{remark}

\end{document}